\documentclass[11pt]{amsart}
\usepackage{color}
\usepackage{amsmath,amssymb,amsfonts,amsthm,bm}
\usepackage{graphicx}
\usepackage{float}
\usepackage{enumerate}
\usepackage{appendix}
  \usepackage[shortlabels]{enumitem}
\usepackage{centernot}

\usepackage{color}

\textheight 626pt
\textwidth 437pt
\topmargin -4mm
\oddsidemargin 5mm
\evensidemargin 5mm

\def\N{\mathbb N}
\def\R{\mathbb R}
\def\Z{\mathbb Z}

\def\cC{\mathcal C}

\def\cE{\mathcal E}
\def\cH{\mathcal H}

\def\cM{\mathcal M}

\def\cO{\mathcal O}

\def\cU{\mathcal U}

\newcommand{\esp}[1]{\quad \text{#1} \quad}
\newcommand{\defn}{\overset{\rm def}{=}}

\newcommand\DIV{{\rm DIV}\,}
\renewcommand\div{{\rm div}\,}
\renewcommand\lim{{\rm lim}\,}

\renewcommand\sup{{\rm sup}\,}

\renewcommand\log{{\rm log}\,}

\newcommand{\with}{\quad\!\hbox{with}\!\quad}
\newcommand{\andf}{\quad\!\hbox{and}\!\quad}
\newcommand{\Sum}{\displaystyle \sum}

\newcommand{\Int}{\displaystyle \int}

\def\dV{\delta\!V}

\def\ddj{\dot \Delta_j}

\newtheorem{theorem}{Theorem}[section]
 
 \newtheorem{lemma}[theorem]{Lemma}
 \newtheorem{proposition}[theorem]{Proposition}
 \theoremstyle{definition}
 \newtheorem{definition}[theorem]{Definition}
 \theoremstyle{remark}
 \newtheorem{remark}[theorem]{Remark}
 
 \numberwithin{equation}{section}

\newcommand{\wt}{\widetilde}

\renewcommand{\div}{\mbox{div}\,}

\newcommand{\ro}{\rho}
\newcommand{\n}{\nabla}
\newcommand{\pt}{\partial_t}
\newcommand{\pal}{\partial_\alpha}
\newcommand{\pbe}{\partial_\beta}

\newcommand{\sumab}{\sum_{\alpha,\beta=1}^d}
\newcommand{\suma}{\sum_{\alpha=1}^d}
\newcommand{\Rd}{\mathbb{R}^d}
\newcommand{\cd}{\frac{d}{2}}

\newcommand{\tV}{\widetilde{V}}

\newcommand{\Dj}{\Delta_j}
\newcommand{\DDj}{\Dot{\Delta}_j}

\newcommand{\intd}{\int_{\mathbb{R}^d}}

\newcommand{\Lde}{L^2}

\newcommand{\normede}[1]{\left\Vert #1\right\Vert_{L^2}}

\newcommand{\normeinf}[1]{\left\Vert #1\right\Vert_{L^\infty}}
\newcommand{\BH}[3]{\dot{B}^{#1}_{#2,#3}}
\newcommand{\NBH}[4]{\left\Vert {#4} \right\Vert_{\dot{B}^{#1}_{#2,#3}}}

\newcommand{\NB}[4]{\left\Vert {#4} \right\Vert_{B^{#1}_{#2,#3}}}
\newcommand{\LpNB}[5]{\left\Vert {#4} \right\Vert_{L^{#5}_T(B^{#1}_{#2,#3})}}
\newcommand{\LptNB}[5]{\left\Vert {#4} \right\Vert_{L^{#5}_t(B^{#1}_{#2,#3})}}

\newcommand{\LpNBH}[5]{\left\Vert {#4} \right\Vert_{L^{#5}_T(\Dot{B}^{#1}_{#2,#3})}}
\newcommand{\TLpNB}[5]{\left\Vert {#4} \right\Vert_{\widetilde{L}^{#5}_T(B^{#1}_{#2,#3})}}

\newcommand{\TLpNBH}[5]{\left\Vert {#4} \right\Vert_{\widetilde{L}^{#5}_T(\dot{B}^{#1}_{#2,#3})}}

\newcommand{\B}[3]{B^{#1}_{#2,#3}}

\begin{document}
\title[]{Local  well-posedness in the critical regularity setting  for  hyperbolic systems  with partial diffusion}
\author{Jean-Paul Adogbo \& Raph\"{a}el Danchin }
\date{}
\begin{abstract}   This paper is dedicated to the local existence theory of the Cauchy problem for 
a general class of symmetrizable hyperbolic partially diffusive systems (also called hyperbolic-parabolic systems) in
the whole space $\Rd$ with  $d\ge 1$.
We address the question of well-posedness for large data having
critical Besov regularity in the spirit of previous works by the second author on the compressible Navier-Stokes equations.
Compared to the pioneering of Kawashima in \cite{Kawashima83} and to the more recent 
work  by  Serre  in \cite{Serr10}, we take advantage of the partial parabolicity of the system 
to consider data  in functional  spaces that need not be embedded in the set of Lipschitz functions. 
This is in sharp contrast with the classical well-posedness theory of (multi-dimensional) hyperbolic systems where it is mandatory. 
A leitmotiv of our analysis is to require less regularity for the  components experiencing a direct diffusion, 
than for the hyperbolic components. We then use an energy method that is performed on the system after
spectral localization  and a suitable  G\r{a}rding inequality.
 As an example, we consider the Navier-Stokes-Fourier equations. 
\end{abstract}
\keywords{Hyperbolic-parabolic systems, Partial diffusion,  Critical regularity, local well-posedness}
\subjclass[2010]{35M11, 35Q30.   76N10}

\maketitle

Many physical phenomena are modelled by first order hyperbolic equations with degenerate dissipative or diffusive terms. This is the case for example in gas dynamics, where the mass is conserved during the evolution, but the momentum balance includes a diffusion (viscosity) or friction (relaxation) term. In this paper we consider  systems of the form 
\begin{equation}
\label{eq:brut}
 \pt u +\suma \pal L^\alpha (u) =
 \sum_{\alpha,\beta=1}^d\pal (B^{\alpha\beta}(u) \pbe u) +f(u,\nabla u),
\end{equation}
	in which $u:(0,T)\times \Rd \longrightarrow \mathbb{U}$ is the unknown. The phase space $ \mathbb{U}$ is an open convex subset of $\mathbb{R}^n$. The nonlinearities are encoded in the smooth functions
	\[ L^\alpha: \mathbb{U}   \longrightarrow \R^n,\quad B^{\alpha\beta}: \mathbb{U}   \longrightarrow \mathcal{M}_{n}(\R) \esp{and} f:  \mathbb{U}\times \mathcal{M}_{n\times d}(\R)\to \R^n.  \]

Among the systems having the form  \eqref{eq:brut} are the  Navier-Stokes-Fourier equations, the magneto-hydrodynamics equations and electromagnetism equations \cite[chap.~6]{Kawashima83}, the supercritical fluid models with chemical
reactions \cite{GioMat13}, the Baer-Nunziato  system \cite{BurCrinTan23}, etc. In each case, diffusion (e.g. thermal conduction or viscosity) 
acts on some components of the unknown, while other components remain unaffected.
\smallbreak
It is well known  since the works by  A. Majda in \cite{Madja84}  and D. Serre in \cite{Serre97} (see also \cite[chapter 10]{GavSerre07}) that general systems of conservation laws (that is \eqref{eq:brut} with $ B\equiv0$) which are \emph{Friedrichs-symmetrizable} 
supplemented with smooth decaying data admit local-in-time strong solutions, that
  may develop singularities (shock waves) in finite time even if the initial data are small perturbations of a constant solution.

The picture changes drastically if the system under consideration possesses diffusive terms.
In his   seminal work   \cite{Kawashima83} on  partially diffusive hyperbolic  systems,  S. Kawashima 
proved  the local existence for a class of systems of type \eqref{eq:brut} supplemented with initial  data in $H^s(\Rd)$ with $s>d/2+2,$ and  exhibited  a sufficient  condition  for global well-posedness
for  small data belonging to $H^s(\Rd)$ with $s>d/2+3.$
This  condition  is now known as the \textit{Kawashima-Shizuta condition}.
It will be discussed in a forthcoming  paper \cite{DanADOglob}, the present work being 
dedicated to the local well-posedness theory for, possibly, large data.

Later, D. Serre in \cite{Serre09} made the link  between the requirements made by
S. Kawashima,  the notion of  entropy-dissipativity  (see Definition \ref{dfn:entro} below) and the constancy of the range of the symbol $B(\xi; u)$, see assumption \textbf{A} below. In \cite{Serr10}, he  provided the normal form for \eqref{eq:brut} close to that used by S. Kawashima and Y. Shizuta in \cite{KawaSui88}. This enabled him 
to enlarge the class of  initial data for local well-posedness to $H^s(\Rd)$ with $s>1+d/2$ (see also the recent paper \cite{Angel23} by F. Angeles).
\smallbreak
In the theory of  multi-dimensional purely  symmetric (or symmetrizable) hyperbolic systems, 
two barriers seem insurmountable: going below  Lipschitz regularity for the initial data
(which, in the Sobolev spaces setting $H^s,$ corresponds to $s>1+d/2$), and  beyond an $L^2$-type functional framework. 
In this paper, we strive for well-posedness results for partially diffusive systems, 
in a Sobolev setting (in fact, in the optimal Besov setting) 
that does not require embedding in $C^{0,1}.$ 
In this endeavour, we shall  keep in mind the paper \cite{Dan01glob} by the second author 
dedicated to the compressible Navier-Stokes system -- a model hyperbolic system with partial diffusion, where 
one  component (the density) is taken in the homogeneous Besov space $\dot B^{\frac d2}_{2,1}(\Rd)$
while the other components (the velocity) belong to  $\dot B^{\frac d2-1}_{2,1}(\Rd).$

Compared to the classical theory presented above, this particular example reveals that, in some cases, it is possible
to reduce by one derivative the regularity of the non-dissipated component, and  by \emph{two}  derivatives
that of the dissipated component (namely the velocity). 
A fundamental  observation is that to get optimal results in terms of regularity, one has to  work with 
one less derivative  for the component that experiences direct diffusion. 
\smallbreak
The present work aims at extending  the example of the compressible Navier-Stokes system,
 to the much more general class of systems that has been considered by D. Serre in \cite{Serr10,Serre09}.
We here concentrate on the well-posedness issue for (possibly) large data, and prepare the ground 
for a forthcoming paper  \cite{DanADOglob} dedicated to the global existence issue, large time asymptotics and diffusion limit for small data.


\section{Results} \label{s:results}

As a first, specifying the structure of the class of systems under consideration is in order. 
Following  D. Serre in \cite{Serre09}, we assume that the system of conservation laws associated to \eqref{eq:brut}  admits
a strongly convex entropy $\eta$ (that is,  $D^2 \eta (u)$ is positive definite for all $u\in  \mathbb{U}$) 
with  flux $q,$  namely,  for all smooth solution $u$ of 
\begin{align*}    \pt u+ \sum_{\alpha=1}^d\partial_\alpha L^\alpha(u)=0,\end{align*}
we have
\begin{align*}
    \pt \eta (u) + {\rm div} q(u) = 0.
\end{align*}
We define the partial and  total symbol  of the second order term in \eqref{eq:brut} to be
\begin{equation}
    \label{def:symbol}
    B^\alpha(\xi,u)\defn \sum_{\beta=1}^d B^{\alpha\beta}(u) \xi_\beta \esp{and}   B(\xi,u)\defn\sumab \xi_\alpha\xi_\beta B^{\alpha\beta}(u),\quad \xi \in \mathbb{R}^d,\quad u\in  \mathbb{U},
\end{equation}
and assume that   \eqref{eq:brut} is  entropy-dissipative, that is, there exists a continuous and positive function 
$\omega$ such that  
\begin{equation}\label{dfn:entro}
D^2\eta(u)\left( X,  B(\xi,u)X \right)\ge \omega(u) \suma \left|   B^\alpha(\xi,u) X\right|^2, \quad
 \xi \in \mathbb{R}^d,\ \forall  u\in  \mathbb{U},\ \forall X\in  \mathbb{R}^n.
\end{equation}
Following Serre's work in \cite{Serre09}, we make the following:
\paragraph{\bf Assumption A} 
The range of  $ B(\xi,u)$  is independent of   $\xi\in \Rd\backslash\{0\}$  and of $u \in  \mathbb{U} $. 
\medbreak
Hence  there exists $n_1$ in $\{0,\cdots,n\}$ 
such that  the range of  $ B(\xi,u)$ 
is isomorphic to $\{0\}\times \mathbb{R}^{n_2}$ with $n_2\defn n-n_1.$
Performing a linear change of coordinates then reduces the study to the case where  the $n_1$ first rows
of  $B(\xi,u)$ are null and the  rank is equal to $n_2.$
\smallbreak
 A typical illustration is  gas dynamics in $\R^d$: then, the first component of the system
 is   the density, a conserved quantity, while the other $d+1$ components
 (velocity field and temperature) are subject to diffusion (see Section \ref{sec:appli:CFNS} for more details).
 \medbreak
According to \cite [Theorem 1.1]{Serr10}, the fact that System \eqref{eq:brut} is entropy dissipative in the sense of \eqref{dfn:entro}, satisfies Assumption A and that the 
 $n_1$ first rows are first-order conservation laws
entails that  the map
$$ u= \begin{pmatrix}
	v\\   w
	\end{pmatrix} 
	\leftrightarrows U\defn  	\begin{pmatrix}
	v\\	z	\end{pmatrix}, \quad  v=(u_,\cdots,u_{n_1})^T $$
	    is a global diffeomorphism from $\mathbb{U}$ onto its image $\mathcal{U}.$ 
Furthermore, the viscous flux $B(u)\nabla_xu$ rewrites $Z(U)\nabla_x z$ and 
the operator $Z(U) \nabla_x$ is strongly elliptic:   there exists a continuous and positive function  $c_1$ such that: 
     \begin{equation}
	    \label{strong_elli}
	    \sum_{\alpha,\beta=1}^{d} \sum_{i,j>n_1}{\xi_\alpha\lambda_i\xi_\beta\lambda_jZ_{ij}^{\alpha\beta}  (U) }  \ge c_1(U) |\xi|^2|\lambda|^2, \ \forall \xi \in \mathbb{R}^d, \ \forall\lambda\in \mathbb{R}^{n_2}, \ \forall U\in\cU.
	    \end{equation}  	   
	
	 Assumption {\bf A} and the above change of variables  
	 ensure that  if we fix some reference state $\overline U$ of $\cU$ 
	 and set $V\defn U-\overline U,$ then  System \eqref{eq:brut} may be rewritten: 
	  \begin{equation}
	\label{Eq_b}
	 S^0(U)\pt V \!+\!\sum_{\alpha}{S^\alpha (U) \pal V} = \sum_{\alpha,\beta}{ \pal (Y^{\alpha\beta}(U) \pbe V)}+f(U,\n U).
	\end{equation}
In what follows, we shall write the matrices $S^\alpha (U)$ by block as follows for $\alpha=0,\cdots, d:$
$$ S^\alpha(U)=      \begin{pmatrix}
S^{\alpha}_{11}(U) & S^{\alpha}_{12}(U) \\ S^{\alpha}_{21}(U) &  S^\alpha_{22}(U)
\end{pmatrix}
\with S^\alpha_{11}(U)\in \cM_{n_1}(\mathbb{R})
\andf  S^\alpha_{22}(U)\in \cM_{n_2}(\mathbb{R}).$$
 We assume that the coefficients of the system \eqref{Eq_b} satisfy the 
 following\footnote{As observed by D. Serre in \cite{Serre09}, the block-diagonal structure of $S^0$ 
and of the dissipation tensor follow from Assumption \textbf{A} and the fact that the entropy $\eta$ is dissipative.}:
  \paragraph{\bf Assumption B} 
 \begin{enumerate}
 \label{cond:DD}
     \item The matrix $S^0(U)$ is  block diagonal and inversible on $\cU,$ 
     and $S^0_{22}(U)\in\cM_{n_2}(\R)$ is   symmetric  positive definite.
     \item \label{cond:DD:2} 
Either all the matrices $S_{11}^\alpha(U)\in\cM_{n_1}(\R)$ are symmetric with, in addition,   $S_{11}^0(U)$
     symmetric positive definite,      or all the matrices $(S^0_{11}(U))^{-1}S_{11}^\alpha(U)$ are symmetric.
     \item    The matrices $Y^{\alpha\beta}(U)$ have the following form: 
      \begin{equation}\label{def:Y:al:be}
       Y^{\alpha\beta}(U)=\begin{pmatrix}0_{n_1} & 0\\0 &  Z^{\alpha\beta}(U)
\end{pmatrix}\quad\hbox{with }\ Z^{\alpha\beta}(U)\in \cM_{n_2}(\mathbb{R})  \end{equation}
 and   Inequality \eqref{strong_elli} holds true. 
     \item The  function $f$ 
     satisfies  $f(\overline U,\cdot)=0$ 
     and may be written 
      \begin{equation}
\label{f=f(f1,f2)}
      f(U,\n U)= \begin{pmatrix}
 f^1(U)\\ f^2(U,\n U)
\end{pmatrix} \with f^2(U,\n U)=f^{21}(U)+ f^{22}(U,\n U^1)+ f^{23}(U,\n U^2),
  \end{equation}
  where $f^1$, $f^{21},$ $f^{22}$ and $f^{23}$ are  smooth functions satisfying 
  $$f^{1}(\overline{U})=0,\quad f^{21}(\overline{U})= f^{22}(\overline{U},0)=f^{23}(\overline{U},0)=0,$$ 
  and such that   $f^{23}$ is  at most quadratic with respect to  $\n U^2.$ 
 \end{enumerate}
 \begin{remark} 
    \label{rmq:cnd:source:term}
 Compared to the work by D. Serre, we consider a slightly more general class of systems: 
 only the submatrices $S^\alpha_{11}$ have to be symmetric,  and we handle  lower order terms.  
  In fact, the type of  nonlinearities in $f$ that can be treated  depends 
  on the regularity framework.  For high enough regularity like in  \cite{Kawashima83}, 
  one can consider general terms of the form $f^1=f^1(U,\nabla U^2)$ and $f^2=f^2(U,\nabla U).$ 
    In our first result  (Theorem  \ref{Thm:loc:scri}) where regularity is lower than 
    in Kawashima's work, but still subcritical, one can take $f$ of the form \eqref{f=f(f1,f2)}.
    In our second theorem (pertaining to critical regularity)
    more restrictive assumptions will be made both on  $f$ and on the matrices of the system.     
      The general principle is that in order to be able to close the estimates
    on some nontrivial time interval $[0,T],$ 
    we need to be in a functional framework where all the coefficients of the system are controlled 
    in $L^\infty([0,T]\times\Rd).$
    \end{remark}
  In what follows, we set   $V=(V^1,V^2)$ so that System \eqref{Eq_b} may be rewritten: 
 \begin{equation}
     \label{Eq_b:V1:V2}
     \begin{cases}
     \displaystyle S^0_{11}(U) \pt V^1 \!+\!\sum_{\alpha=1}^d\left(S^\alpha_{11} (U) \pal V^1\!+\!S^\alpha_{12} (U) \pal V^2\right) =  f^1(U)   \\[0,2cm]
     \displaystyle   S^0_{22}(U) \pt V^2 \!+\!\sum_{\alpha=1}^d\left(S^\alpha_{21} (U) \pal V^1\!+\!S^\alpha_{22} (U) \pal V^2\right)=  f^2(U,\n U)\\\hspace{8cm}+\Sum_{ \substack{\alpha,\beta=1}}^d\!{\pal (Z^{\alpha\beta}(U) \pbe V^2)  }.
     \end{cases}
 \end{equation}
Before stating our first local existence result, let us motivate our functional framework. Since our
general approach is based on energy estimates,
 we shall consider spaces  built on $L^2.$ In order to handle some limit cases and to be able to  gain two full 
 derivatives with respect to the regularity of the initial data in the parabolic part of the system, 
 it is suitable to use Besov spaces  of type $B^s_{2,1}$  (see the definition in Appendix \ref{appendix:LP}) rather than the usual  Sobolev spaces $H^s=B^s_{2,2}.$
 Another fundamental point is to have a functional framework that guarantees control in $L^\infty([0,T]\times\R^d)$
 for the coefficients of the system. Unless very particular  assumptions are made on the dependency
 of  $S^0, S^\alpha,Y^{\alpha\beta}$ with respect to $V,$ this leads us to 
 assume that the initial data $V_0$ belongs to $\B{\theta}{2}{1}$ with $\theta\ge \frac{d}{2},$ 
 to ensure the aforementioned $L^\infty$ control of the coefficients of the system.
 Finally,  we have to keep in mind 
  that $V^1$ is governed by a hyperbolic equation (hence no gain of regularity for $V^1$), 
  while, for given $V^1,$ the function   $V^2$ satisfies  a parabolic equation.
  Hence,  starting from $V^2_0\in \B{\theta}{2}{1}$  we expect  $V^2$ to be in  ${C}(0,T;\B{\theta}{2}{1})\cap L^1(0,T; \B{\theta+2}{2}{1})$, provided one can control the source term in $ L^1(0,T; \B{\theta}{2}{1})$, in particular   $ S^\alpha_{21}(U)\pal V^1$. 
  Owing to  product laws, this means that we  need $\n V^1$ to be in $L^1(0,T; \B{\theta}{2}{1}),$ which leads us to considering
     $V^1_0$ in $\B{\theta+1}{2}{1}.$
\smallbreak
   This motivates our  first result, that  can be stated as follows:
\begin{theorem}\label{Thm:loc:scri}
  Let $d\ge 1$ and  $s\ge d/2.$ Under   assumption \textbf{B}, if the initial data satisfies $( V_0^1,V^2_0)\in \B{s+1}{2}{1}\times\B{s}{2}{1} $ and $U_0\defn V_0+\overline{U}$ takes values in a  bounded open subset $\mathcal{O}_0$ of $ \mathcal{U}$
such that $\overline{\mathcal{O}_0}\subset \mathcal{U},$    then there
exists a time $T> 0$ depending only on suitable norms of the data 
and on ${\rm dist}({\cO_0},\partial\cU)$
and such that the following results hold true:
\begin{description}
    \item[Existence] System $\eqref{Eq_b}$ with $U\defn V+\overline U$ supplemented with the initial data $V_0$ has a unique solution $V=(V^1,V^2)$ in the class $E_{T}^s$ defined by
\[V^1\in  {\cC}([0,T];\B{s+1}{2}{1}),\quad
 V^2\in  {\cC}([0,T];\B{s}{2}{1})\cap  L^1_{T}(\B{s+2}{2}{1})\andf  \partial_tV\in L^1_T(\B{s}{2}{1}), \]
and  $U$ belongs to a $d-$neighborhood of  $\overline{\mathcal{O}_0}$ with $d< {\rm dist}( \overline{\mathcal{O}_0}, \partial\mathcal{U}) $.
 \item[Continuation criterion] If $V$ is defined on $[0,T_1[\times\Rd,$  belongs to $E_T^s$ for all $T<T_1,$ and satisfies: \begin{enumerate}
 \item $U([0,T_1[\times\Rd)$ is a compact subset of $\cU,$ \smallbreak
    \item \label{eq:2}  $\displaystyle\int^{T_1}_0\Bigl(\normeinf{\n V}^2+\Big\|\partial_t(S^0_{11}(U))\!+\!\sum_\alpha \partial_\alpha
    (S^\alpha_{11}(U))\Big\|_{L^\infty}
    \!+\!\normeinf{\partial_t (S^0_{22}(U))}\Bigr)<\infty $, \smallbreak
    \item \label{eq:3} 
    $\lVert \n V^1 \rVert_{L^\infty( [0, T_1[\times \Rd)}<\infty$,
\end{enumerate}
then  $V$ may be continued 
on $[0, T^*]\times \Rd$ for some $T^*>T_1$ in a solution of \eqref{Eq_b} which belongs to $E^s_{T^*}$.
\end{description}
\end{theorem}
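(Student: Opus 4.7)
The plan is to construct a solution via a classical approximation scheme, establish uniform a priori bounds in $E_T^s$, pass to the limit in a weaker topology, prove uniqueness by a difference estimate one derivative below, and deduce the continuation criterion from a careful inspection of the a priori bounds. Concretely, one truncates the initial data as $V_0^{(n)}=\sum_{|j|\le n}\Dj V_0$ and builds a sequence $(V^{(n)})$ by iteratively solving the \emph{linear} hyperbolic-parabolic system \eqref{Eq_b:V1:V2} with coefficients frozen at the previous iterate $U^{(n-1)}=V^{(n-1)}+\overline U$. Since $U^{(n-1)}$ stays in a compact subset of $\cU$ and each step is linear, solvability of each iterate follows from classical semigroup/energy arguments.

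The heart of the matter is the uniform a priori estimate. I would apply $\Dj$ to both equations of \eqref{Eq_b:V1:V2} and form the $L^2$-inner products of the first with $\Dj V^1$ weighted by $S^0_{11}(U)$, and of the second with $\Dj V^2$ weighted by $S^0_{22}(U)$. The symmetry of Assumption \textbf{B}(2), combined with integration by parts, cancels the principal hyperbolic term and leaves only zeroth-order pieces controlled by $\pt S^0_{11}$ and $\sum_\alpha\pal S^\alpha_{11}$. On the parabolic side, \eqref{strong_elli} together with a G\r{a}rding-type inequality produces a coercive term $\normede{\n\Dj V^2}^2$, up to a low-frequency remainder absorbed by Gronwall. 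The commutators $[\Dj,S^\alpha(U)]\pal V$ and $[\Dj,Y^{\alpha\beta}(U)]\pbe V^2$ are handled through Bony's paraproduct decomposition and composition estimates in $\B{s}{2}{1}$, for which the condition $s\ge d/2$ and the embedding $\B{d/2}{2}{1}\hookrightarrow L^\infty$ are crucial. The asymmetric regularity $(s+1,s)$ is dictated by the cross-term $S^\alpha_{21}(U)\pal V^1$ appearing in the $V^2$-equation: to exploit the full parabolic smoothing, this term must lie in $L^1_T(\B{s}{2}{1})$, which forces $V^1\in L^\infty_T(\B{s+1}{2}{1})$. Summing $\ell^1$-wise in $j$ and applying a Gronwall argument then yields a uniform bound on $(V^{(n)})$ in $E_T^s$ on an interval $[0,T]$ depending only on the data and on ${\rm dist}(\overline{\cO_0},\partial\cU)$.

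Convergence of $(V^{(n)})$ is obtained by a difference estimate performed one derivative below, which avoids the delicate commutators; the limit $V$ lies in $E_T^s$ by weak compactness, and time continuity is read off from the equation. Uniqueness follows from the same lower-level difference estimate applied to two solutions. For the continuation criterion, rerunning the a priori estimate shows that the integrand in condition (2) is exactly the Gronwall factor arising from the hyperbolic energy identity (time derivative of $S^0_{11}$, divergence of $S^\alpha_{11}$, time derivative of $S^0_{22}$) combined with the nonlinear bound $\normeinf{\n V}^2$, while (3) $\normeinf{\n V^1}<\infty$ is precisely what closes the $\B{s}{2}{1}$ parabolic estimate for the source $S^\alpha_{21}(U)\pal V^1$. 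The main obstacle, and the step requiring most care, is the clean treatment of the commutators $[\Dj,S^\alpha(U)]\pal V$ respecting the asymmetric splitting $(s+1,s)$; it is what makes the threshold $s\ge d/2$ attainable, which is in turn exactly what is needed to keep the coefficients of the system under $L^\infty$ control.
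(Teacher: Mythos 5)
Your overall outline follows the paper's strategy closely: iterative scheme with coefficients frozen at the previous step, paradifferential energy estimates with weights $S^0_{11}(U)$ and $S^0_{22}(U)$, G\r{a}rding inequality for the parabolic block, the asymmetric $(s+1,s)$ regularity dictated by $S^\alpha_{21}(U)\pal V^1$, convergence in a space with one less derivative, and reading the continuation criterion off the Gronwall factors. So the skeleton is right.

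There is, however, a genuine gap in the sentence ``Summing $\ell^1$-wise in $j$ and applying a Gronwall argument then yields a uniform bound on $(V^{(n)})$ in $E_T^s$.'' With your scheme as written, this step does not close. The obstruction is the cross term $\sum_\alpha S^\alpha_{12}(U)\pal V^2$ in the source $\Theta^1$ of the hyperbolic block: to bound $\wt V^1$ in $L^\infty_T(B^{s+1}_{2,1})$ by a quantity $\le R$, you need $\|V^2\|_{L^1_T(B^{s+2}_{2,1})}$ to be \emph{small}, not merely bounded, because it gets multiplied by a factor of order $1+R$. But the maximal-regularity estimate for the parabolic block only yields $\|V^2\|_{L^1_T(B^{s+2}_{2,1})}\lesssim \|V_0^2\|_{B^s_{2,1}}+\dots$, which is $O(R)$ and does not shrink uniformly over the iterates as $T\to0$; the smallness is provided only by Lebesgue dominated convergence and hence depends on $V^{(n)}$ itself, not just on the data. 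The paper bypasses this by writing $V^2=V^2_L+V_S$, where $V^2_L$ solves the constant-coefficient parabolic equation \eqref{eq_lin-dif} from data $V_0^2$ and $V_S$ starts from zero. The quantity $\|\partial_tV_L^2\|_{L^1_T(B^s_{2,1})}+\|V_L^2\|_{L^1_T(B^{s+2}_{2,1})}$ can be made $\le\eta^2$ for $T\le T_0$ with $T_0$ depending only on the datum $V_0^2$ (Inequality \eqref{eq:unifVL}--\eqref{eq:condH2}), while $V_S$, starting from zero, inherits smallness from its sources. It is this decomposition, encoded as hypotheses $(\cH_2)$--$(\cH_3)$ defining the invariant set $E^s_{T,R,\eta}$ in \eqref{def:esp:ex:loc}, that makes $T$ depend only on the data. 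Without it, your Gronwall step leaves a non-smallable $O(R^2)$ contribution in the $V^1$ estimate. A secondary, smaller inaccuracy: condition \ref{eq:3} ($\|\nabla V^1\|_{L^\infty}<\infty$) is needed not for the term $S^\alpha_{21}(U)\pal V^1$ (which is already controlled by $\|V^1\|_{B^{s+1}_{2,1}}$) but for the lower-order term $f^{22}(U,\nabla U^1)$ in Assumption \textbf{B}; this is precisely why Remark \ref{rmq:improved} can drop \ref{eq:3} when $f^{22}$ is at most quadratic in $\nabla U^1$.
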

\begin{remark}
Since the norms that come into play in the continuation criterion are controlled by the regularity 
in the space $E_T^{\frac d2},$ one may deduce that, in the case of smooth data,  the time of existence
is independent of the space $E_T^s$ that is considered.\end{remark}
\begin{remark}\label{rmq:improved}
Condition \ref{eq:3} is not needed, if 
$ f^{22}(U,\n V^1)$ is at most quadratic in $\n V^1.$
Furthermore, if  all the functions $S_{22}^0,$ $Z^{\alpha\beta}$ and $(S_{11}^0)^{-1}S^\alpha_{12}$  only depend on 
$U^1$ and $f^{23}(U,\nabla U^2)$ is affine\footnote{We shall say that a function $K=K(X,Y)$ 
is \emph{affine} in $Y$ if  it is of the form    \begin{align}
    \label{def:f:lin:v}    K(X,Y)=K_1(X)Y+K_2(X).\end{align}} in $\nabla U^2,$ then   Condition \ref{eq:2} reduces to
$$\Int_0^{T_1}\bigl(\|\nabla V^1\|_{L^\infty}^2+\|\nabla V^2\|_{L^\infty}\bigr)
<\infty.$$
\end{remark}
\begin{remark}\label{rmq:comp:serre}
Compared to the results of D. Serre \cite{Serr10} and S. Kawashima \cite{KawaSui88}, 
we here use \emph{different and smaller} regularity indices for $ V^1$ and $V^2$: one may take data in $\B{\cd+1}{2}{1}\times\B{\cd}{2}{1}$ instead of $H^s$ for $s>\cd+1$ in Serre's work and $ s>\cd+2$ in Kawashima's work.  
In fact,  the component $V_0^2$ can be taken in any space $H^s$ with $s>\cd$ and does not need to be Lipschitz.
Finally, although it has been omitted for simplicity, we can prove exactly the same statement 
if we put a  source term in $L^1([0,T];B^{s+1}_{2,1}\times B^s_{2,1})$ in the right-hand side of \eqref{Eq_b}.
\end{remark}

One may wonder  whether  System \eqref{Eq_b} is solvable in a `critical regularity setting' 
as in the Navier-Stokes case. In fact,
since the work of the second author in \cite{Danchin05,Danchin07}, it is  known that the barotropic compressible  Navier-Stokes equations
are well-posed if the initial density and velocity   belong to  $\BH{\frac{d}{2}}{2}{1}$ and $ \BH{\frac{d}{2}-1}{2}{1}$, respectively. Since in the setting of System \eqref{Eq_b}, the density  and 
velocity play the role of $V^1$ and $V^2,$ it is tempting to study whether regularity 
  $\BH{\frac{d}{2}}{2}{1}\times  \BH{\frac{d}{2}-1}{2}{1}$ is enough for $(V_0^1,V_0^2).$ 
  An obvious drawback of this framework is that, since  $\dot B^{\frac d2-1}_{2,1}$ does not control the $L^\infty$ norm, 
    the coefficients of the system cannot be too dependent on $U^2$ and $\nabla U^1$
    (see more explanations below  \eqref{cri:eq_V1_2}).   This  motivates   the following:  
   \paragraph{\textbf{Assumption  C}}   On  $\mathcal{U}$, we have
    \begin{enumerate}
\label{cond_system}
\item   The matrix $S^0(U)$ is  block diagonal and inversible on $\cU,$ 
      $S^0_{22}(U)$ is   symmetric  positive definite
and $S^0_{22}$   depends only on $U^1$.
    \item The matrices $S^\alpha_{21}(U)$, $S^\alpha_{22}(U)$ 
    are  affine  with respect to  $U^2.$
    \item  The matrices $ \wt S^\alpha_{12}\defn
    (S^0_{11})^{-1}S^\alpha_{12}$  depend only on $U^1$ while the matrices $ \wt S^\alpha_{11}\defn(S^0_{11})^{-1}S^\alpha_{11}$ are symmetric,  
    are affine with respect to $U^2,$ and independent of $U^1.$
        \item The functions $Z^{\alpha\beta}$  for $\alpha,\beta=1,\cdots,d$ depend only on $U^1$.
     \item $f^1$ and $f^2$ are  functions of $U$ only, and satisfy $f^1(\overline{U})=0$ and $f^2(\overline{U})=0$.
\end{enumerate}
\smallbreak
 Since we do not have any control on the $L^\infty$ norm of  $U^2_0,$ 
 the phase space $\cU$ cannot be supposed bounded in the $n_2$ last directions. 
 This leads us  to introduce the following set:
\begin{align}
 \label{def_U_1}
      \mathcal{U}^1=\{ U^1\in \mathbb{R}^{n_1}\, /\, \exists U^2\in \mathbb{R}^{n_2}; \: U=(U^1,U^2)\in \mathcal{U} \}.
    \end{align}
\begin{theorem}\label{thm:loc:cri}
    Let the structure assumptions \textbf{C} be in force and let $\cO_0^1$ be a bounded open subset such that $\overline{\cO_0^1}\subset \mathcal{U}^1.$  
     Let $U_0$ be  such that $U^1\in \cO_0^1$, $V^1_0\in \BH{\cd}{2}{1}$ and $V^2_0\in \BH{\cd-1}{2}{1}$ with $V_0=U_0-\overline{U}$.  Then, there exists a positive time $T$ such that System \eqref{Eq_b}  has a
unique solution $V$ with $ U= V+ \overline{U} $ and $ U^1\in \cO^1$, where $\cO^1 $ is a  $d_1-$neighborhood of  $ \cO_0^1$ with $d_1< {\rm dist}({\cO_0^1}, \partial\mathcal{U}^1) $. Moreover $V$ belongs to the space $\mathcal E_T$ defined by 
\[  V^1\in {\cC}([0,T];\BH{\cd}{2}{1}),\;\;  V^2\in  {\cC}([0,T];\BH{\cd-1}{2}{1}) \cap L^1_T(\BH{\cd+1}{2}{1}) \esp{and} \pt V\in L^1_T(\BH{\cd-1}{2}{1}).  \]
\end{theorem}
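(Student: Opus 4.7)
The plan is to follow the same overall strategy as in Theorem~\ref{Thm:loc:scri} --- Friedrichs-type approximation, frequency-localized energy estimates, and passage to the limit with uniqueness in a weaker space --- but adapted to the homogeneous Besov framework and to the fact that $V^2$ is no longer controlled in $L^\infty$. Assumption~\textbf{C} is exactly what makes this possible: the matrices either depend only on $U^1$ (which is controlled in $L^\infty$ via $\BH{\cd}{2}{1}\hookrightarrow L^\infty$) or depend affinely on $U^2$, so that Bony paraproducts together with standard Besov composition estimates give all the products the expected regularity. Approximate solutions $V_n$ would be built by truncating Fourier modes to $|\xi|\le n$, which reduces \eqref{Eq_b:V1:V2} to a locally Lipschitz ODE in $L^2$ and produces a maximal solution; the main task is to derive uniform-in-$n$ estimates on a common time interval $[0,T]$.

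For these uniform estimates I would apply $\DDj$ to each line of \eqref{Eq_b:V1:V2} and perform an $L^2$ energy estimate against $S^0_{11}(U)\DDj V^1$ for the hyperbolic component and against $\DDj V^2$ for the parabolic component. The symmetry of $\wt S^\alpha_{11}$ (Assumption~\textbf{C}.3) yields after integration by parts a contribution involving only $\div\wt S^\alpha_{11}(U)$, which by the affine-in-$U^2$, $U^1$-independent structure is linear in $\n V^2$ and hence absorbable in $\|V^2\|_{L^1_T(\BH{\cd+1}{2}{1})}$; the strong ellipticity \eqref{strong_elli} combined with a G\aa{}rding-type argument at each frequency gives the parabolic gain $\int_0^T 2^{2j}\|\DDj V^2\|_{L^2}\,dt$ on the left-hand side. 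Commutator terms of the form $[\DDj,\wt S^\alpha_{11}(U)]\pal V^1$ and $[\DDj,Z^{\alpha\beta}(U^1)]\pal V^2$ would be bounded by the classical Besov commutator lemmas, using composition estimates on $U^1\mapsto Z^{\alpha\beta}(U^1)$. The cross term $\wt S^\alpha_{12}(U^1)\pal V^2$ in the hyperbolic equation is harmless because $\wt S^\alpha_{12}$ depends only on $U^1$, while the critical cross term $S^\alpha_{21}(U)\pal V^1$ on the parabolic line, which is affine in $U^2$ by Assumption~\textbf{C}.2, is estimated in $L^1_T(\BH{\cd-1}{2}{1})$ by $T\,\|S^\alpha_{21}(U)\|_{L^\infty_T(\BH{\cd}{2}{1})}\,\|\n V^1\|_{L^\infty_T(\BH{\cd-1}{2}{1})}$, which is small for short $T$. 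Summing on $j\in\Z$ and using a bootstrap, the estimates close on some $T>0$ depending only on the initial norms and on ${\rm dist}(\cO_0^1,\partial\cU^1)$, and the phase-space constraint $U^1_n\in\cO^1$ is preserved by time continuity together with the embedding into $L^\infty$.

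Convergence would be obtained by a Cauchy argument on the differences $V_{n+1}-V_n$, performed \emph{one derivative below} in the norm of $\cC([0,T];\BH{\cd-1}{2}{1}\times \BH{\cd-2}{2}{1})$ combined with $L^1_T(\BH{\cd}{2}{1})$ for the parabolic part; losing one derivative is classical and unavoidable because the matrices depend nonlinearly on $V^1$. The limit $V$ would then satisfy \eqref{Eq_b:V1:V2} and, by Fatou's property of Besov spaces together with the uniform bounds, would belong to $\cE_T$; time continuity at the right scale is recovered from the equation, using $\pt V\in L^1_T(\BH{\cd-1}{2}{1})$. Uniqueness would follow from exactly the same difference estimate applied to two solutions sharing the same data.

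The main obstacle is the source term $S^\alpha_{21}(U)\pal V^1$ in the parabolic equation: the parabolic scheme requires it to lie in $L^1_T(\BH{\cd-1}{2}{1})$ while $\pal V^1$ only belongs to $L^\infty_T(\BH{\cd-1}{2}{1})$. The missing $L^1$ integrability is extracted from the shortness of $T$, which is precisely what forces the solution to be \emph{local} in time, and the affine-in-$U^2$ dependence of $S^\alpha_{21}$ required by Assumption~\textbf{C}.2 is indispensable for estimating the product in the critical space $\BH{\cd-1}{2}{1}$ without any $L^\infty$ bound on $V^2$.
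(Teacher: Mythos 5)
Your high-level outline (frequency-localized energy, low-regularity cross terms handled by the affine-in-$U^2$ structure, one-derivative loss at the stability step) is broadly in the right direction, but there are two genuine gaps that make the argument as written fail, and a third important technical ingredient you omit.

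First, without a decomposition of $V^2$ the bootstrap cannot close for \emph{large} initial data. The estimate for $V^1$ in $L^\infty_T(\BH{\cd}{2}{1})$ produces a term $C(1+M_1)\|V^2\|_{L^1_T(\BH{\cd+1}{2}{1})}$, and for a parabolic evolution the latter is of size $C_0\|V_0^2\|_{\BH{\cd-1}{2}{1}}$ no matter how small $T$ is --- the smallness you hope to ``extract from the shortness of $T$'' is only a dominated-convergence statement about the unknown solution itself, so it cannot be fed into a bootstrap. The paper's fix is to split $V^2=V_L^2+V_S$ where $V_L^2$ solves the constant-coefficient linear parabolic equation with initial data $V_0^2$; then $\|V_L^2\|_{L^1_T(\BH{\cd+1}{2}{1})}$ is made smaller than any prescribed $\eta^2$ by choosing $T$ depending only on $V_0^2$ (dominated convergence applied to a \emph{fixed} function), and $V_S$ has zero initial data so it inherits the smallness. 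This is the essential reason the critical result is proved with a different set of bootstrap hypotheses $(\mathbb{C}1)$--$(\mathbb{C}5)$ rather than mimicking the subcritical one. Relatedly, the paper also replaces the coefficient $U^1$ in the second-order operator by a low-frequency truncation $U^1_m=\bar U^1+\dot S_{m+1}V^1$, absorbing the error into the right-hand side; this is what makes terms like $\partial_t(S_{22}^0(U^1_m))$ controllable in $L^\infty$ and is absent from your sketch. Your estimate of the cross term $S^\alpha_{21}(U)\partial_\alpha V^1$ also needs revising: $S^\alpha_{21}(U)$ is only affine in $U^2\in\BH{\cd-1}{2}{1}$, so $S^\alpha_{21}(U)\notin L^\infty_T(\BH{\cd}{2}{1})$ as you claim; the correct bound uses $V^2\in L^2_T(\BH{\cd}{2}{1})$ by interpolation.

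Second, your convergence/uniqueness step is wrong for $d=2$. Working one derivative lower in $\cC([0,T];\BH{\cd-1}{2}{1}\times\BH{\cd-2}{2}{1})$ means estimating source terms like $(S^\alpha_{22}(U_2)-S^\alpha_{22}(U_1))\partial_\alpha V_1^2$ in $L^1_T(\BH{\cd-2}{2}{1})$; in $d=2$ this requires a product law $\BH{0}{2}{1}\times\BH{0}{2}{1}\to\BH{-1}{2}{1}$, which is \emph{false} (the product only lands in $\BH{-1}{2}{\infty}$). The paper therefore proves uniqueness in the weaker space with third index $\infty$, and compensates the loss of summability by a logarithmic interpolation inequality followed by Osgood's lemma, as in Danchin's work on barotropic Navier--Stokes. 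Your Cauchy argument would only go through in dimension $d\geq 3$. Consistently, the paper's existence step is carried out by compactness (smoothing the data, applying the subcritical Theorem~\ref{Thm:loc:scri} plus the continuation criterion to obtain solutions with uniform bounds in $\cE_T$ on a common time, then extracting a convergent subsequence), rather than by a contraction on differences.
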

\begin{remark} It goes without saying that a similar result holds true in the \emph{nonhomogeneous} 
critical space $B^{\frac d2}_{2,1}\times B^{\frac d2-1}_{2,1}.$ We here chose the homogeneous setting to prepare
the ground for our companion paper   \cite{DanADOglob}  dedicated to global well-posedness. 
It is also possible to get a local well-posedness statement in intermediate spaces $B^{s+1}_{2,1}\times B^s_{2,1}$
with $s\in[d/2-1,d/2].$ 
\end{remark}

The rest of this paper unfolds as follows.  In Sect. \ref{sec:proof:loc:sur}, we establish the local existence and continuation criterion for System \eqref{Eq_b} under Assumption {\textbf B}. Sect. \ref{sec:proof:loc:cri} is devoted to the proof of our critical local well-posedness result (Theorem \ref{thm:loc:cri}).  In Appendix \ref{appendix:LP} we briefly 
recall the definition of the Littlewood-Paley decomposition and  review some useful properties of Besov spaces.
 In appendix \ref{appendix:resu}, 
we set out some key results that are of constant use in this article: maximal regularity of the linear parabolic equation, G\r arding inequality, etc.
\medbreak\noindent
{\bf Notation.} In all the paper,  $(c_j)_{j\in \mathbb{Z} }$ stands for a positive  sequence such that  $\left\Vert (c_j)\right\Vert_{l^1(\mathbb{Z})}=1$. Also, $C$ designates a generic  constant, the value of which depends on the context. 
If $X$ is a Banach space, then we denote by  $L^p(0,T;X)$ or $L^p_T(X)$ the Bochner space of   measurable functions $\phi:[0,T]\to X$ such that  $t\mapsto \|\phi(t)\|_X$  lies in the Lebesgue space $L_p(0,T).$  The corresponding norm is denoted by  $\Vert \cdotp\Vert_{L^\ro_T(X)}.$

For  $\Sigma:(0,T)\times\R^d\to \R\times\R^d$  a differentiable function, we set
$\DIV \Sigma\defn\partial_t\Sigma^0+\Sum_{\alpha=1}^d\partial_\alpha\Sigma^\alpha.$


\section{Proof of Theorem \ref{Thm:loc:scri}}\label{sec:proof:loc:sur}
In this section, we prove the local existence of solutions for System \eqref{Eq_b}   
under Assumption~\textbf{B}. 
To simplify the presentation, we assume that, on $\cU,$ the matrix $S^0(U)$ is symmetric definite positive and 
 the matrices $S^\alpha_{11}(U)$ are symmetric. 
To treat the case where  just  $S^0_{22}(U)$  is  symmetric definite positive
and the  matrices $(S^0_{11}(U))^{-1}S^\alpha_{11}(U)$ are symmetric, it 
is only a matter of following the proof of Proposition \ref{energy_V^1} below, instead of using  Proposition \ref{energ:est:V1:lem}. 
\smallbreak
The first step is to establish a priori estimates for the following  linearization of \eqref{Eq_b:V1:V2}:
\begin{equation}
  \label{lin:gen:eq_b}
	\left\{\begin{aligned}
	    &S^0_{11}(U)\pt \tV^1 + \suma S^\alpha_{11}(U) \pal \tV^1 =\Theta^1,  \\[-1ex]
	     &S^0_{22}(U)\pt \tV^2 - \sumab\pal(Z^{\alpha \beta}(U)\pbe \tV^2)=\Theta^2.
	\end{aligned}\right.
	 \end{equation}
We assume that the given function $U:[0,T]\times\Rd\to{\mathbb R}^n$ is smooth, sufficiently decaying at infinity
and  that  there exists  a bounded open   set  $\mathcal{O}$ satisfying 
 $ \overline{\mathcal{O}} \subset \mathcal{U}$ such that:
 \begin{align}
    \label{assuption_U_O_1}
       U(t,x) \in \mathcal{O} \esp{for all} t\in [0,T],\  x\in \Rd.
    \end{align}
Consequently, one can assume that  there exists a constant $C=C(\mathcal{O}, S)$  such that
\begin{align}
\label{norinf:S(U)}
    &\underset{\alpha\in\{0,\cdots,d\}}\sup \Vert S^\alpha(U)\Vert_{L^\infty([0,T]\times \Rd)}\le C, 
   \\ \label{equi_S0}
      &C^{-1}I_n\le S^{0}(U) \le CI_n\quad\hbox{on }\ [0,T]\times\Rd.
    \end{align}
The (given) source terms $\Theta^1$ and $\Theta^2$ are smooth, and we supplement 
the system with  a smooth initial data $\wt V_0.$
\medbreak
Since  there is no coupling between the two equations of \eqref{lin:gen:eq_b}, they  will be  considered
separately:  the equation for $\wt V^1$ will be seen as a hyperbolic symmetric system, while
that for $\wt V^2,$ as a parabolic system. 

The proof of the local existence result then follows from  an iterative scheme where $V_{p+1}$
is  the solution of the linear hyperbolic/parabolic system \eqref{lin:gen:eq_b} with 
 $f,$ and matrices  $S^\alpha_{jk}$ and $Z^{\alpha \beta}$ computed at $\overline U+ V_p.$
 The main difficulty is to exhibit a positive time $T$ such that  the sequence $(V_p)_{p\in\N}$ is bounded
 in the space $E_T^s$.
 Then, as for hyperbolic systems, we will be able to prove convergence only for 
  a weaker norm corresponding to \textit{a loss of one
derivative}. The same restriction occurs as regards the uniqueness issue and, in a last step, 
we will have to take advantage of functional analysis arguments to establish that, indeed, 
the limit satisfies the nonlinear system and belongs to the space $E_T^s.$ 
Since we have relatively high regularity, this loss of derivative is  harmless, 
except in the case $d=1$ and $s=1/2$ that will be briefly discussed  at the end of this section.


\subsection{A priori estimates for a  linear hyperbolic system}

Here we concentrate on the first equation  of \eqref{lin:gen:eq_b}. Before starting the proof, let us fix the following notation:
     \begin{equation}     \label{def:alpha*:**}
     \theta^{**}\defn \max\Bigl(\frac{d}{2},  \theta-1\Bigr)
    \esp{and} \theta^* =\max\Bigl(\frac d2,\theta\Bigr)\cdotp  \end{equation}
Besides, for any tempered distribution $W$ and $j\geq-1,$ we shall denote $W_j\defn\Dj W. $

\begin{proposition}\label{energ:est:V1:lem}
   Let $\sigma>-d/2.$ There exists  a constant $C_0$ depending only on $\overline {S^0_{11}}\defn S^0_{11}(\bar U)$ and a constant 
   $C$ depending on $\sigma,$ $\cO$ and on all the coefficients of the system such that 
     for all $t\in [0,T]$, the following inequality holds: 
    \begin{equation}
        \label{est:tV1:lin}
       \lVert \tV^1 \rVert_{{L}^\infty_t(\B{\sigma}{2}{1})} \leq C_0 
       \biggl(\NB{\sigma}{2}{1}{\tV_0^1}+\int^t_0  \NB{\sigma}{2}{1}{\Theta^1}+
       C\int_0^t \Phi_1 \NB{\sigma}{2}{1}{\tV^1}\biggr),
    \end{equation}
    where $\Phi_1(t)\defn \normeinf{\DIV(S_{11}(U))}+\NB{\sigma^{**}+1}{2}{1}{V}.$
    
    Furthermore, we have 
    \begin{equation}\label{eq:dttV1}
    \int_0^t\|\partial_t\wt V^1\|_{B^{\sigma-1}_{2,1}} \leq C
  \int_0^t \bigl(1+\|V\|_{B^{\sigma^{**}}_{2,1}}\bigr) \bigl(\|\wt V^1\|_{B^{\sigma}_{2,1}}+\|\Theta^1\|_{B^{\sigma-1}_{2,1}}\bigr)\cdotp\end{equation}    
\end{proposition}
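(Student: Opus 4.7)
The estimate \eqref{est:tV1:lin} is obtained by a classical Littlewood--Paley energy argument adapted to the symmetric hyperbolic equation satisfied by $\wt V^1$. Apply $\Delta_j$ to the first equation of \eqref{lin:gen:eq_b}: writing $\wt V^1_j\defn\Delta_j\wt V^1$, one finds
\begin{equation*}
S^0_{11}(U)\,\partial_t\wt V^1_j+\sum_\alpha S^\alpha_{11}(U)\,\partial_\alpha\wt V^1_j=\Delta_j\Theta^1+R_j,
\end{equation*}
where the remainder $R_j$ collects the usual commutators $[S^\alpha_{11}(U),\Delta_j]\partial_\alpha\wt V^1$ and $[S^0_{11}(U),\Delta_j]\partial_t\wt V^1$. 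The time-derivative commutator is reduced to spatial ones by substituting $\partial_t\wt V^1$ from the equation itself (modulo a harmless contribution in $\Theta^1$).

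Take the $L^2$ inner product with $\wt V^1_j$; the symmetry of $S^0_{11}(U)$ and $S^\alpha_{11}(U)$ allows an integration by parts yielding
\begin{equation*}
\frac{1}{2}\frac{d}{dt}\langle S^0_{11}(U)\wt V^1_j,\wt V^1_j\rangle=\frac{1}{2}\langle \DIV(S_{11}(U))\wt V^1_j,\wt V^1_j\rangle+\langle \Delta_j\Theta^1+R_j,\wt V^1_j\rangle.
\end{equation*}
Uniform positive definiteness of $S^0_{11}(U)$ (by \eqref{equi_S0}) turns the left-hand side into the time derivative of an $L^2$-equivalent energy. Dividing by this energy and integrating in time gives a block estimate in which the constant in front of $\|\Delta_j\wt V_0^1\|_{L^2}$ and $\int_0^t\|\Delta_j\Theta^1\|_{L^2}$ depends only on $\overline{S^0_{11}}$ (the energy equivalence at the reference state), while the $U$-dependent contributions are absorbed into the Gronwall factor. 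The commutators are then estimated via the classical Besov commutator inequality, valid for $\sigma>-d/2$:
\begin{equation*}
\sum_{j\ge-1}2^{j\sigma}\,\|[S^\alpha_{11}(U),\Delta_j]\partial_\alpha\wt V^1\|_{L^2}\le C\,\|\nabla S^\alpha_{11}(U)\|_{\B{d/2}{2}{1}\cap L^\infty}\,\|\wt V^1\|_{\B{\sigma}{2}{1}},
\end{equation*}
combined with a composition estimate that bounds $\|\nabla S^\alpha_{11}(U)\|_{\B{d/2}{2}{1}\cap L^\infty}$ by $C\|V\|_{\B{\sigma^{**}+1}{2}{1}}$. Multiplying the block bound by $2^{j\sigma}$ and summing over $j\ge-1$ reconstructs the $\B{\sigma}{2}{1}$ norms on both sides and delivers \eqref{est:tV1:lin}, with the $\Phi_1$-factor having the two pieces announced in the statement.

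The bound \eqref{eq:dttV1} on $\partial_t\wt V^1$ is obtained by solving the first equation of \eqref{lin:gen:eq_b} for $\partial_t\wt V^1$, namely
\begin{equation*}
\partial_t\wt V^1=(S^0_{11}(U))^{-1}\Bigl(\Theta^1-\sum_\alpha S^\alpha_{11}(U)\partial_\alpha\wt V^1\Bigr),
\end{equation*}
then estimating the right-hand side in $\B{\sigma-1}{2}{1}$ by the standard product rule (one factor in $\B{d/2}{2}{1}$) and using composition laws in $\B{d/2}{2}{1}\cap L^\infty$ to control the $U$-dependent matrices $(S^0_{11}(U))^{-1}$ and $S^\alpha_{11}(U)$ by $C(1+\|V\|_{\B{\sigma^{**}}{2}{1}})$; integration in time then concludes. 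The main obstacle is the careful bookkeeping needed to isolate the clean constant $C_0$ depending only on $\overline{S^0_{11}}$ in front of $\|\wt V_0^1\|_{\B{\sigma}{2}{1}}$ and $\int_0^t\|\Theta^1\|_{\B{\sigma}{2}{1}}$, all the genuinely nonlinear $U$-dependent perturbations being pushed into $\Phi_1$ so that Gronwall's lemma closes the estimate.
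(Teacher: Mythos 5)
Your proposal uses the same Littlewood--Paley energy method, but it diverges from the paper's proof in one technical step that matters for the precise form of the estimate, and the way you handle the resulting extra commutator is left vague.

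Concretely: you apply $\Delta_j$ directly to the equation $S^0_{11}(U)\partial_t\tV^1+\sum_\alpha S^\alpha_{11}(U)\partial_\alpha\tV^1=\Theta^1$, so the remainder contains both $[\Delta_j,S^\alpha_{11}(U)]\partial_\alpha\tV^1$ \emph{and} $[\Delta_j,S^0_{11}(U)]\partial_t\tV^1$. The paper avoids the time-derivative commutator altogether by conjugating the dyadic block with $S^0_{11}(U)$: rewrite the equation as $\partial_t\tV^1+\sum_\alpha\wt S^\alpha_{11}(U)\partial_\alpha\tV^1=(S^0_{11}(U))^{-1}\Theta^1$ with $\wt S^\alpha_{11}:=(S^0_{11})^{-1}S^\alpha_{11}$, apply $\Delta_j$, then multiply by $S^0_{11}(U)$; the localized equation becomes $S^0_{11}(U)\partial_t\tV^1_j+\sum_\alpha S^\alpha_{11}(U)\partial_\alpha\tV^1_j=R^{11}_j+\Theta^1_j$ with $R^{11}_j=S^0_{11}(U)\sum_\alpha[\wt S^\alpha_{11}(U),\Delta_j]\partial_\alpha\tV^1$ and $\Theta^1_j=S^0_{11}(U)\Delta_j((S^0_{11}(U))^{-1}\Theta^1)$. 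No commutator with $\partial_t$ appears, and the $S^0_{11}$-weighted energy pairs exactly with this localization.

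Your proposed fix -- substituting $\partial_t\tV^1$ from the equation inside $[\Delta_j,S^0_{11}(U)]\partial_t\tV^1$ -- is not wrong in spirit, but it is not worked out, and if you do work it out you produce terms of the form $\|\nabla V\|_{\cdot}\cdot\|\Theta^1\|_{B^{\sigma-1}_{2,1}}$ (from the $\Theta^1$ piece of $\partial_t\tV^1$) as well as a quadratic-in-$\|\nabla V\|$ contribution to the Gronwall weight. These do not fit inside the displayed $\Phi_1\|\tV^1\|_{B^\sigma_{2,1}}$ and they destroy the announced structure where the prefactor of $\int_0^t\|\Theta^1\|_{B^\sigma_{2,1}}$ is the clean constant $C_0$ depending only on $\overline{S^0_{11}}$. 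Calling this ``a harmless contribution in $\Theta^1$'' glosses over exactly the point that makes the conjugation trick worthwhile. A second, minor point: the commutator estimate you cite is \eqref{comm:est:a:b}, valid for $-d/2<\sigma\le d/2+1$; for $\sigma\ge d/2+1$ the paper switches to \eqref{comm:est:a:b:sig>0} (this is the $\sigma^{**}$ dichotomy). Your proof of \eqref{eq:dttV1} from the rewritten equation $\partial_t\tV^1=-\sum_\alpha\wt S^\alpha_{11}(U)\partial_\alpha\tV^1+(S^0_{11}(U))^{-1}\Theta^1$ matches the paper's and is fine.
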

\begin{proof}
Applying the non-homogeneous dyadic block $\Delta_j$ to the first  equation of \eqref{lin:gen:eq_b}$_1$ yields
	    \[S^{0}_{11}(U)\pt  \tV^1_j + \suma S^\alpha_{11}(U)\pal \tV^1_j= R^{11}_j +\Theta^1_j,\]
	    where we define 
	    $$\begin{aligned}
	    R_j^{11} \defn S^0_{11}(U)&\suma[\wt S_{11}^\alpha(U),\Dj]\pal \tV^1\with \wt S^\alpha_{11}\defn
	    (S^0_{11})^{-1}\circ S^\alpha_{11},\\\andf
     \Theta^1_j \defn S^0_{11}(U)&\suma \Dj\left( ( S^0_{11}(U))^{-1} \Theta^1\right)\cdotp
     \end{aligned}$$
     
	    Next, taking the scalar product in ${\mathbb R}^{n_1}$ of this equation with  $\wt V^1_j$, integrating on $\Rd$  along with integration by parts and using the symmetry properties of $S^\alpha_{11}(U)$ gives:
     \begin{align*}
       \frac{1}{2} \frac{d}{dt}\intd S^0_{11}(U) \tV^1_j\cdot\tV^1_j =  \frac{1}{2}\intd \left(\DIV(S_{11}(U))\right)\tV^1_j\cdot \tV^1_j +\intd(R^{11}_j +\Theta^1_j)\cdot\tV^1_j.
     \end{align*}
Cauchy-Schwarz inequality, inequalities \eqref{norinf:S(U)} and \eqref{equi_S0} 
lead   for some $C=C( \mathcal{O})$ to
     \begin{multline*} 
      \frac{d}{dt}\intd S^0_{11}(U) \tV^1_j\cdot\tV^1_j \le \normeinf{\DIV(S_{11}(U))}  \intd  S^0_{11}(U) \tV^1_j\cdot\tV^1_j \\
          + C_0\normede{( R^{11}_j , \Theta^1_j)}\sqrt{\intd  S^0_{11}(U) \tV^1_j\cdot \tV^1_j}\,\cdotp 
            \end{multline*}
     Then, from Lemma \ref{lem_der_int} with $\displaystyle X=  \intd S^0_{11}(U) \tV^1_j\cdot\tV^1_j$ and \eqref{equi_S0}, one gets that for all $t\in[0,T],$
     \begin{equation}
         \label{est:V1:Li:L2}
         \Vert \tV^1_j(t)\Vert_{L^2} \le C_0\normede{\tV^1_{0,j}}+ C\int^t_0 \Bigl(\normeinf{\DIV(S_{11}(U))} \normede{\tV_j^1}+ \Vert ( R^{11}_j ,\Theta^1_j )\Vert_{L^2}\Bigr)\cdotp
     \end{equation}
     To  bound the terms $  R^{11}_j$ in $L^2,$ we put
 Inequality \eqref{comm:est:a:b:sig>0} and Proposition \ref{propo_produc_BH} together, 
and obtain if $\sigma\ge \frac{d}{2}+1$, 
\begin{multline}
\label{est:R11:gene}
    \normede{R^{11}_j}  \le Cc_j2^{-j\sigma} \Bigl(\normeinf{ \nabla \left( \wt S_{11}^\alpha(U) -\wt S_{11}^\alpha(\overline{U}) \right) }\NB{\sigma}{2}{1}{\tV^1}\\
   + \normeinf{\n \tV^1}\NB{\sigma-1}{2}{1}{  \nabla \left(\wt S_{11}^\alpha(U)-\wt S_{11}^\alpha(\overline{U}) \right)}\Bigr)\cdotp
\end{multline}
Taking advantage of the embedding $ \B{\sigma-1}{2}{1} \hookrightarrow  L^\infty$, the previous inequality may be simplified as follows, for all $\sigma\ge \cd+1$ for some $ C=C(\mathcal{O})$:
\begin{equation}\label{esti_R_11}
   \normede{R^{11}_j}  
    \le Cc_j2^{-j\sigma}  \NB{\sigma}{2}{1}{ V}\NB{\sigma}{2}{1}{\tV^1}.
\end{equation}
 For $-\cd<\sigma\le \cd+1$, we  combine \eqref{comm:est:a:b},  Proposition \ref{propo_compo_BH} and the embedding $ \B{\cd}{2}{1} \hookrightarrow  L^\infty\cap \B{\cd}{2}{\infty}$ to get
\begin{align}
    \label{esti_R_11:sigma:cri}
    \normede{R^{11}_j}  
    &\le Cc_j2^{-j\sigma}  \NB{\cd+1}{2}{1}{ V}\NB{\sigma}{2}{1}{\tV^1}.
\end{align}
Plugging \eqref{esti_R_11} (or \eqref{esti_R_11:sigma:cri}) into \eqref{est:V1:Li:L2} yields for all $t\in[0,T],$
\begin{multline*}
2^{j\sigma}\Vert \tV^1_j(t)\Vert_{L^2} \le C_02^{j\sigma}\normede{\tV^1_{0,j}}+ C2^{j\sigma} \int^t_0\Vert \tV^1_j\Vert_{L^2}\normeinf{\DIV(S_{11}(U))}\\
    +Cc_j\int^t_0\NB{\sigma^{**}+1}{2}{1}{ V} \NB{\sigma}{2}{1}{\tV^1}+C_02^{j\sigma}\int^t_0\Vert\Theta^1_j \Vert_{L^2}.
\end{multline*}
 Then, summing over $j\ge -1$ gives  Inequality \eqref{est:tV1:lin}.
 \smallbreak
 In order to prove \eqref{eq:dttV1},  it suffices to use the relation
 \begin{equation}  \label{eq:pt:tV1}
  \pt \tV^1 =- \suma \wt S^\alpha_{11}(U) \pal \tV^1 + (S^0_{11}(U))^{-1}\Theta^1.
	 \end{equation}
	 Then, the result follows from Propositions \ref{propo_produc_BH} and  \ref{propo_compo_BH}.
 \end{proof}


\subsection{A priori estimates for a  linear parabolic system}

\begin{proposition}\label{energ:est:V2:lem}
     Let  $s>-d/2.$ There exists a constant $C_0$ depending only on $\overline{S^0_{22}}\defn S^0_{22}(\bar U),$
 a constant $c$ depending only on the  ellipticity constant in \eqref{strong_elli}, 
 and a constant $C$ depending on $\cO,$ $s$ and on the coefficients of the system 
  such that for all $t\in[0,T],$ 
    \begin{equation}
     \label{est:tV2:lin}
     \lVert \tV^2 \rVert_{\widetilde{L}^\infty_t(\B{s}{2}{1})}+ c\lVert \tV^2 \rVert_{L^1_t(\B{s+2}{2}{1})} \le C_0\biggl(\NB{s}{2}{1}{\tV^2_0}+\int^t_0 \NB{s}{2}{1}{\Theta^2} +C\int_0^t\Phi_2 \NB{s}{2}{1}{\tV^2}\biggr),
\end{equation}
where $    \Phi_2\defn  1+\normeinf{\pt(S^0_{22}(U))}+ (1+\NB{s^*}{2}{1}{V})^2
     \lVert V\Vert_{\B{s^{*}+1}{2}{1}}^2.$
     \medbreak
     Furthermore, we have for all $t\in[0,T],$ 
     \begin{multline}\label{eq:dttV2}
\int_0^t\|\pt\wt V^2\|_{B^s_{2,1}}\leq C
\int_0^t\bigl(1+\|V\|_{B^{s^*}_{2,1}}\bigr)\Bigl(\|\wt V^2\|_{B^{s+2}_{2,1}}+\|\nabla\wt V^2\|_{B^{s^*}_{2,1}}\|V\|_{B^{s+1}_{2,1}}
+\|\Theta_2\|_{B^s_{2,1}} \Bigr)\cdotp\end{multline}    \end{proposition}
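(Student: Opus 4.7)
The plan is to mimic the strategy used for $\wt V^1$ in Proposition \ref{energ:est:V1:lem}, but with the crucial extra ingredient that the parabolic structure provides a gain of two derivatives, recovered via a G\r arding-type inequality. I would first apply $\Delta_j$ to the second equation of \eqref{lin:gen:eq_b}, producing
\[
S^0_{22}(U)\pt\tV^2_j-\sumab\pal\bigl(Z^{\alpha\beta}(U)\pbe\tV^2_j\bigr)=\Theta^2_j+R^{22}_j,
\]
where $R^{22}_j$ collects the commutators between the variable coefficients $S^0_{22}(U),Z^{\alpha\beta}(U)$ and the Fourier localization $\Delta_j$. Taking the $L^2$ inner product with $\tV^2_j$ and integrating by parts yields, using the symmetry of $S^0_{22}(U)$,
\[
\tfrac12\tfrac{d}{dt}\intd S^0_{22}(U)\tV^2_j\cdot\tV^2_j+\sumab\intd Z^{\alpha\beta}(U)\pbe\tV^2_j\cdot\pal\tV^2_j
=\tfrac12\intd\pt(S^0_{22}(U))\tV^2_j\cdot\tV^2_j+\intd(\Theta^2_j+R^{22}_j)\cdot\tV^2_j.
\]

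The key step is to handle the second term on the left. Thanks to the strong ellipticity \eqref{strong_elli} and the G\r arding inequality recalled in Appendix \ref{appendix:resu}, there exist $c>0$ and $C>0$ such that, for $j$ large enough,
\[
\sumab\intd Z^{\alpha\beta}(U)\pbe\tV^2_j\cdot\pal\tV^2_j\ge c\,2^{2j}\normede{\tV^2_j}^2-C\normede{\tV^2_j}^2,
\]
with the residual handled by spectral truncation at low frequencies (where $2^{2j}$ is bounded). Combining with \eqref{equi_S0} and Lemma \ref{lem_der_int} applied to $X=\int S^0_{22}(U)\tV^2_j\cdot\tV^2_j$, one gets
\[
\normede{\tV^2_j(t)}+c\int_0^t2^{2j}\normede{\tV^2_j}\le C_0\normede{\tV^2_{0,j}}+C\int_0^t\Bigl(\normeinf{\pt(S^0_{22}(U))}\normede{\tV^2_j}+\normede{\Theta^2_j}+\normede{R^{22}_j}\Bigr)\cdotp
\]

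The main obstacle is estimating $R^{22}_j$ so as to recover exactly $\Phi_2\NB{s}{2}{1}{\tV^2}$. The commutator $[Z^{\alpha\beta}(U),\Delta_j]\pbe\tV^2$ is differentiated once in $\alpha$, producing a term of order $1$ in $\tV^2$. Using the commutator estimate from Appendix \ref{appendix:LP}, Bony's decomposition and the composition estimates, one gets a bound of the type $c_j 2^{-js}\NB{s^{*}+1}{2}{1}{V}\NB{s+1}{2}{1}{\tV^2}$. The factor $\NB{s+1}{2}{1}{\tV^2}$ is of intermediate order between the quantities we wish to control, so I would use the interpolation $\NB{s+1}{2}{1}{\tV^2}\lesssim\NB{s}{2}{1}{\tV^2}^{1/2}\NB{s+2}{2}{1}{\tV^2}^{1/2}$ and then Young's inequality to split this into a small multiple of $c\,\NB{s+2}{2}{1}{\tV^2}$, absorbed by the left-hand side, and a term of the form $C(1+\NB{s^{*}}{2}{1}{V})^2\NB{s^{*}+1}{2}{1}{V}^2\NB{s}{2}{1}{\tV^2}$, exactly as required by the definition of $\Phi_2$. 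Multiplying by $2^{js}$, summing over $j\ge -1$ and taking the $\widetilde L^\infty_t$ and $L^1_t$ norms yields \eqref{est:tV2:lin}.

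Finally, the estimate \eqref{eq:dttV2} on $\pt\tV^2$ follows by rewriting the equation as
\[
\pt\tV^2=\bigl(S^0_{22}(U)\bigr)^{-1}\Bigl(\sumab\pal(Z^{\alpha\beta}(U)\pbe\tV^2)+\Theta^2\Bigr),
\]
expanding the derivative on the parabolic term, and applying the product law of Proposition~\ref{propo_produc_BH} together with the composition estimate of Proposition~\ref{propo_compo_BH}, which together yield the factor $(1+\NB{s^{*}}{2}{1}{V})$ in front of the right-hand side.
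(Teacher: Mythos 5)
Your overall plan (energy method on the spectrally localized equation, G\r arding-type coercivity, then interpolation plus Young to absorb the first-order commutators into the dissipation) is the right one and matches the paper's strategy. However there is a genuine gap in the very first step: you apply $\Delta_j$ directly to the second equation of \eqref{lin:gen:eq_b}. Since $S^0_{22}(U)$ depends on $x$ through $U$, this produces, among the commutators collected in your $R^{22}_j$, the term
\[
[\Delta_j,\,S^0_{22}(U)]\,\pt\tV^2,
\]
which involves the \emph{time derivative} of $\tV^2$. Its size is of the order of $c_j2^{-js}\|\nabla S^0_{22}(U)\|_{\cdots}\|\pt\tV^2\|_{B^{s-1}_{2,1}}$, and $\|\pt\tV^2\|$ is not among the quantities controlled by $\Phi_2\NB{s}{2}{1}{\tV^2}$; trying to re-substitute the equation to eliminate $\pt\tV^2$ brings back $\nabla^2\tV^2$ and leads to a circular estimate. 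The paper circumvents this by applying the \emph{conjugated} localization operator $S^0_{22}(U)\,\Dj\,(S^0_{22}(U))^{-1}$, which makes the time-derivative block $(S^0_{22})^{-1}S^0_{22}\pt\tV^2=\pt\tV^2$ commute exactly with $\Dj$, so that the only commutators that appear are with $\wt Z^{\alpha\beta}=(S^0_{22})^{-1}Z^{\alpha\beta}$ and with $(S^0_{22})^{-1}\pal(Z^{\alpha\beta})$, both zeroth or first order in $\tV^2$ and hence estimable by $\Phi_2\NB{s}{2,1}{\tV^2}$.

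A second, more cosmetic, difference: you integrate by parts and invoke the divergence-form G\r arding bound $\sum\int Z^{\alpha\beta}\pbe\tV^2_j\cdot\pal\tV^2_j\ge c\|\nabla\tV^2_j\|^2-C\|\tV^2_j\|^2$, then Bernstein. The paper instead keeps the equation in non-divergence form $Z^{\alpha\beta}(U)\pal\pbe\tV^2_j$ and uses the specific Lemma~\ref{gard_lem}, which allows an $\varepsilon\|\nabla^2f\|\|f\|$ residual, absorbed after spectral localization. Both routes give the same coercivity for $j\ge0$; yours is the ``standard'' form mentioned in Remark~\ref{rmq:garb:in}, whereas the paper proves the non-divergence variant because it matches the conjugated-localization decomposition. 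The remainder of your argument (commutator bounds of the form $c_j2^{-js}\NB{s^*+1}{2}{1}{V}\NB{s+1}{2}{1}{\tV^2}$, interpolation, Young, then summation; and the derivation of \eqref{eq:dttV2} by reading $\pt\tV^2$ off the equation and applying Propositions~\ref{propo_produc_BH}--\ref{propo_compo_BH}) is essentially the same as the paper's.
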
    
\begin{proof}
Let $\wt Z^{\alpha\beta}\defn (S^0_{22})^{-1}\circ Z^{\alpha\beta}.$
Applying $S^0_{22}(U)\Dj (S^0_{22}(U))^{-1}$ to 
 \eqref{lin:gen:eq_b}$_2$ gives
\begin{align*}    S^0_{22}(U)\pt \tV^2_j  - \sumab Z^{\alpha \beta}(U)\pal\pbe \tV^2_j=\Theta^2_j+R_j^2,\end{align*}
 with
\begin{align*}
 \Theta^2_j &\defn  S^0_{22}(U) \Dj((S^0_{22})^{-1}(U) \Theta^2),\\
     R_j^{2} &\defn S^0_{22}(U) \sumab \biggl(\left[\Dj,\wt Z^{\alpha\beta}(U)\right]\pal\pbe \tV^2 
     +\Dj\left((S^0_{22})^{-1}(U)\pal(Z^{\alpha\beta}(U))\pbe \tV^2\right)\biggr)\cdotp
\end{align*}
Taking the $\Lde(\Rd;\R^{n_2})$ inner product of the above equation with $\tV^2_j$ yields  for $j\ge -1$
	 \begin{multline}
  \label{esti_v_2-2}
	    \frac{1}{2} \frac{d}{dt}\intd{ S^0_{22}(U) \tV^2_j\cdot \tV^2_j}   -\sumab\intd{ Z^{\alpha \beta}(U)\pal\pbe \tV_j^2\cdot \tV_j^2 } =\frac{1}{2}\intd{ (\pt(S^0_{22}(U)) )\tV_j^2\cdot\tV_j^2 }\\
+\intd{(R^2_j+\Theta^2_j)\cdot\tV_j^2 }.
	\end{multline}
Under Condition \eqref{strong_elli}, we have by making use of   Lemma \ref{gard_lem}, for all  $j\ge 0,$
\begin{multline*}
    -\sumab\intd{ Z^{\alpha \beta}(U)\pal\pbe \tV_j^2\cdot \tV_j^2 } \ge c\normede{\n \tV_j^2 }^2-\varepsilon\normede{\n^2 \tV_j^2}\normede{ \tV_j^2}-C({\varepsilon}, \mathcal{O} )\Vert  \tV_j^2\Vert^2_{L^2} 
\end{multline*}
where $c$ is positive constant depending on $\mathcal{O},$ and $\varepsilon>0.$
 Owing to  Bernstein inequality and for $\varepsilon$ small enough, we deduce that
 for some constant $C$ depending only on $Z$ and on $\cO,$ 
 \begin{eqnarray}
    \label{est:j=:sur:VS}
\qquad    -\sumab\intd{ Z^{\alpha \beta}(U)\pal\pbe \tV_j^2\cdot \tV_j^2 } \ge 2^{2j}\frac{c}{2}\Vert \tV_j^2 \Vert_{L^2(\mathbb{R})}^2-C \Vert  \tV_j^2\Vert^2_{L^2(\mathbb{R}^d)} \esp{for} j\ge 0,\\\label{est:j=-1:sur:VS}
\qquad \esp{and}     \sumab\intd{ Z^{\alpha \beta}(U)\pal\pbe  \tV_{-1}^2\cdot  \tV_{-1}^2 } \le C \normede{ \n^2 \tV_{-1}^2 }\normede{ \tV_{-1}^2 } \le C  \normede{ \tV_{-1}^2 }^2.
\end{eqnarray}
  Hence, from \eqref{norinf:S(U)}, \eqref{equi_S0} and using \eqref{est:j=:sur:VS}, \eqref{est:j=-1:sur:VS}, Inequality \eqref{esti_v_2-2} becomes for all $j\ge -1$:
  $$\displaylines{
    \frac{d}{dt}\intd{ S^0_{22}(U) \tV^2_j\cdot \tV^2_j} + c2^{2j}\intd S^0_{22}(U) \tV^2_j\cdot \tV^2_j \le C_0(1+\normeinf{\pt(S_{22}^0(U))})\intd S^0_{22}(U) \tV^2_j\cdot \tV^2_j\hfill\cr\hfill
       +C_0\normede{(R^2_j,\Theta^2_j)}  \sqrt{\intd S^0_{22}(U) \tV^2_j\cdot \tV^2_j}.}$$
	 So, using Lemma \ref{lem_der_int} and, again, \eqref{equi_S0},
  one gets for all $t\in [0,T]$ and $j\ge -1$,
	  \begin{multline}
 \label{esti_v_2-2'}
      \normede{\tV^2_j(t)}+c2^{2j}\int^t_0\normede{\tV_j^2} \le C_0\biggl( \normede{\tV^2_{0,j}}\\+  \Vert  \int^t_0\left(1+ \normeinf{\pt(S_{22}^0(U))}\right)\Vert\tV^2_j\Vert_{L^2}
        +\int^t_0\normede{(R^2_j,\Theta^2_j)}\biggr)\cdotp
	       	\end{multline}
 Owing to   \eqref{norinf:S(U)}, we have for some $C=C(\mathcal{O})$,
\begin{align*}
& \left\Vert R^{2}_j \right\Vert_{L^2} 
    \le C \sumab \biggl(\left\Vert\left[\Dj,\wt Z^{\alpha\beta}(U) \right]\pal\pbe \tV^2 \right\Vert_{L^2}
    + \left\Vert \Dj\bigl((S^0_{22})^{-1}(U)\pal(Z^{\alpha\beta}(U))\pbe \tV^2\bigr) \right\Vert_{L^2}\biggr)\cdotp
\end{align*}
 Taking  $\theta=s>-d/2$ 
in Proposition \ref{propo_produc_BH} and combining with Proposition \ref{propo_compo_BH} and suitable 
 embedding, we discover that
\begin{align*}
      \NB{s}{2}{1}{(S^0_{22}(U))^{-1}\pal(Z^{\alpha\beta}(U))\pbe \tV^2 }\le C (1+\NB{s^*}{2}{1}{V}) \NB{s^{*}+1}{2}{1}{V}\NB{s+1}{2}{1}{\tV^2}.
\end{align*} 
Next, with the aid of inequality \eqref{comm:est:a:b:sig>0} and Proposition \ref{propo_compo_BH} one obtains for $s\ge \cd+1,$
\begin{multline*}
 \sum_{j\geq-1}    \sumab \!2^{js}   \big\|{\big[\Dj,\wt Z^{\alpha\beta}(U) \big]\pal\pbe \tV^2}\big\|_{L^2}
 \le C \Bigl(\normeinf{\n V}\!\NB{s}{2}{1}{\n \tV^2} \!+ \normeinf{\n ^2 \tV^2}\!\NB{s}{2}{1}{ V}\Bigr)\cdotp
\end{multline*}
The previous inequality may be  simplified by using Besov embedding. We have
\begin{align*}
   \sum_{j\geq-1}     \sumab2^{js}
   \big\|{\big[\Dj,\wt Z^{\alpha\beta}(U) \big]\pal\pbe \tV^2}\big\|_{L^2}
\le C\NB{s-1}{2}{1}{\n V}\NB{s}{2}{1}{\n \tV^2}.
\end{align*}
If  $-\cd<s\le \cd+1$, then Inequality \eqref{comm:est:a:b} combined with  Proposition \ref{propo_compo_BH}
and the embedding $  \B{\cd}{2}{1} \hookrightarrow  L^\infty\cap \B{\cd}{2}{\infty}$ give
\begin{align}
\label{est:comm:tV2:gen:sig:s}
   \sum_{j\geq-1}     \sumab2^{js}
   \lVert{\left[\Dj,\wt Z^{\alpha\beta}(U) \right]\pal\pbe \tV^2}\rVert_{L^2}
\le C\NB{\cd+1}{2}{1}{ V}\NB{s}{2}{1}{\n \tV^2}.
\end{align}
Reverting to \eqref{esti_v_2-2'} then  integrating on $[0,t]$ and summing over $j\ge -1$ 
implies:
\begin{multline}    \label{esti_gene}
    \LptNB{s}{2}{1}{\tV^2}{\infty}\!+\!c\LptNB{s+2}{2}{1}{\tV^2}{1} \le 
    C_0\biggl(\NB{s}{2}{1}{\tV^2_0}\!\!+\!\int^t_0 \Bigl(\left(1\!+\! \normeinf{\pt(S_{22}^0(U))}\right)\NB{s}{2}{1}{\tV^2}\\+\NB{s}{2}{1}{\Theta^2}\Bigr)\biggr)
+ C\int^t_0\Bigl((1+\NB{s^*}{2}{1}{V})\NB{s^* +1}{2}{1}{V}\NB{s+1}{2}{1}{\tV^2}+\NB{s^{**}+1}{2}{1}{V}\NB{s+1}{2}{1}{\tV^2}\Bigr)\cdotp
    \end{multline}
Using interpolation  and Young's inequality yields 
$$(1\!+\!\NB{s^*}{2}{1}{V}) \NB{s^*+1}{2}{1}{V}\NB{s+1}{2}{1}{\tV^2}
    \le \frac{c}{4} \NB{s+2}{2}{1}{\tV^2} 
   \! +\! C (1\!+\!\NB{s^*}{2}{1}{V})^2\NB{s^*+1}{2}{1}{V}^2\NB{s}{2}{1}{\tV^2}.$$
Similarly, we have
$$ \NB{s^{**}+1}{2}{1}{V}\NB{s+1}{2}{1}{\tV^2}\le \frac{c}{4} \NB{s+2}{2}{1}{\tV^2} + C\NB{s^{**}+1}{2}{1}{V}^2\NB{s}{2}{1}{\tV^2}.$$
    Plugging these inequalities  into \eqref{esti_gene}  and observing that 
 $s^*\geq s^{**},$  we get \eqref{est:tV2:lin}.
\smallbreak
Finally, to bound $\pt \tV^2$, we use the relation 
\begin{align}
      \label{eq:pt:tV2}
       \pt \tV^2  = (S^0_{22}(U))^{-1}\sumab \pal(Z^{\alpha \beta}(U)\pbe \tV^2) + (S^0_{22}(U))^{-1}\Theta^2.
\end{align}
Hence, using again  Propositions \ref{propo_produc_BH} and Proposition \ref{propo_compo_BH}, we discover that
$$\begin{aligned}
\|\pt\wt V^2\|_{B^s_{2,1}}
\leq C\bigl(1+\|V\|_{B^{s^*}_{2,1}}\bigr)
\biggl(\|\wt V^2\|_{B^{s+2}_{2,1}}+\|\nabla\wt V^2\|_{L^\infty}\|V\|_{B^{s+1}_{2,1}}
+\|\Theta_2\|_{B^s_{2,1}} \biggr)\cdotp
\end{aligned}$$ 
Using suitable embedding, we get Inequality \eqref{eq:dttV2}. 
\end{proof}


\subsection{Estimates for the linearized coupled system}\label{ss:coupled} 

For  given smooth functions $U$ with range in $\mathcal{U}$ 
 we  consider
 the following linear system with variable coefficients:
 \begin{equation}
  \label{lin:eq_b}
	\begin{cases}
	    &S^0_{11}(U)\pt \tV^1 + \suma S^\alpha_{11}(U) \pal \tV^1 =\Theta^1(U) \\[0.2cm]
	     &S^0_{22}(U)\pt \tV^2 - \sumab\pal(Z^{\alpha \beta}(U)\pbe \tV^2)=\Theta^2(U)
	\end{cases}
	 \end{equation}
supplemented with initial data
\begin{equation}
    \label{data:lin}
   \tV_{|t=0}=\tV_0=(\tV_0^1,\tV_0^2)\in B^{s+1}_{2,1}\times B^s_{2,1}\with s\geq\cd\cdotp
\end{equation}
The functions $\Theta^1$ and $\Theta^2$ are given by (see Assumption \textbf{B} for the conditions on $f$): 
\begin{align}
    \label{def:Theta1:2:invari}
    \begin{split}
     \Theta^1(U)&\defn f^1(U) -\suma S^\alpha_{12}(U) \pal V^2,\\[-2ex]
    \Theta^2(U) &\defn f^2(U,\n U)-\suma\left(S^\alpha_{21}(U) \pal V^1 + S^\alpha_{22}(U) \pal V^2\right)\cdotp
      \end{split}
\end{align}
Our aim is to prove that if, for some given $R\geq1$  
$$\max\Bigl(\|\wt V_0^1\|_{B^{s+1}_{2,1}}, \|\wt V_0^2\|_{B^{s}_{2,1}}, 
\|V^1\|_{L^\infty_T(B^{s+1}_{2,1})} + \|V^2\|_{L^\infty_T(B^{s+1}_{2,1})}\Bigr)\leq R,$$
then the same property holds for $(\wt V^1,\wt V^2)$ provided  $T$ is  small enough. 
\smallbreak
We plan to bound $\wt V^1$ and $\wt V^2$ by means of Propositions  \ref{energ:est:V1:lem} and \ref{energ:est:V2:lem}. 
In the case of large $R$ however, a difficulty arises in some terms of $\Theta^1.$ For example, 
we have  for all $\alpha=1,\cdots, d,$
\begin{align*}
  \LpNB{s+1}{2}{1}{S^\alpha_{12}(U)\pal V^2}{1}
     &\le C \int^T_0 \left(\NB{s+1}{2}{1}{V}\NB{\frac{d}{2}+1}{2}{1}{V^2}+ ( 1+\NB{\frac{d}{2}}{2}{1}{V})\NB{s+2}{2}{1}{V^2}\right)\\
    & \le C (1+R) \LpNB{s+2}{2}{1}{V^2}{1}.
\end{align*}
In order to ensure that the contribution of this term in the estimate of $\wt V^1$
is smaller than $R,$ we need to know that  $\LpNB{s+2}{2}{1}{V^2}{1}$ is very small. 
Although Lebesgue dominated convergence theorem guarantees that this is true  when $T$ goes to zero,
we need a more precise information for constructing the solutions. 
To achieve it, we decompose  $V^2$ and $\wt V^2$ as follows:
$$V^2=V^2_L+V_S\andf  \wt V^2=V^2_L+\wt V_S,$$ where 
 $V^2_L$ is the solution  the following linear parabolic system 
 with constant coefficients:
	    \begin{align}\label{eq_lin-dif}	 \begin{cases}
	    \overline{S^0_{22}}\pt V^2_L  - \overline{Z}^{\alpha \beta}\pal\pbe V^2_L=0  \\[1ex]
	      V^2_L(0)= \tV^2_0.	 \end{cases}	\end{align}
The new unknown  $\wt V_S$ thus  satisfies 
  \begin{equation}
	 \label{eq_small-V_S}
	  	 S^0_{22}(U) \pt \tV_S  - \sumab\pal(Z^{\alpha\beta}(U) \pbe \tV_S)=\Theta_S
	 \end{equation}
	    where 
	    \begin{multline}\label{eq:Theta_S}
	    \Theta_S\defn  (\overline{S^0_{22}}-S^0_{22}(U))\pt V_L^2+\sumab\pal(Z^{\alpha \beta}(U)- \overline{Z}^{\alpha \beta})\pbe V_L^2    \\+ f^2(U,\n U)-\suma\left(S^\alpha_{21}(U) \pal V^1 + S^\alpha_{22}(U) \pal V^2\right)\cdotp\end{multline}
  Let us fix some $R\geq1$ and   make the following assumptions:   
    \begin{enumerate}
    \item [$(\cH_1)$] $\max\bigl(\LpNB{s+1}{2}{1}{V^1}{\infty}, \LpNB{s}{2}{1}{V^2}{\infty}\bigr)  \le R$,
  \item [$(\cH_2)$]    $\|\pt V_L^2\|_{L^1_T(B^s_{2,1})}+ \|V_L^2\|_{L^1_T(B^{s+2}_{2,1})}\leq\eta^2$,  
    \item [$(\cH_3)$] $\LpNB{s}{2}{1}{V_S}{\infty} + \LpNB{s+2}{2}{1}{V_S}{1} 
    + \LpNB{s}{2}{1}{\pt V_S}{1}\leq\eta $,
     \item [$(\cH_4)$]  $\LpNB{s}{2}{1}{\pt V}{1}\leq\sqrt\eta $,
     \item [$(\cH_5)$] $V([0,T]\times\R^d)\subset\cO\subset\!\subset\cU.$
\end{enumerate}
We claim that if the initial data satisfy 
     \begin{equation}\label{eq:wtdata}2C_0\max\bigl(\NB{s+1}{2}{1}{\wt V^1_0},\NB{s}{2}{1}{\wt V^2_0}\bigr)\leq R\end{equation}
     where $C_0$ has been  defined in 
     Propositions \ref{energ:est:V1:lem} and \ref{energ:est:V2:lem}, and if $\eta\in(0,1)$ and $T\in(0,1)$ are small enough then the following set 
 \begin{equation}     \label{def:esp:ex:loc}
    E^s_{T,R,\eta}\defn \Big\{ Z\in E_T^s :\;  \;\text{ Conditions $ (\mathcal{H}_1)-(\mathcal{H}_5)$ are satisfied}  \Big\}
 \end{equation} 
is invariant under the mapping  $ V \mapsto \tV$ with $\tV$ satisfying \eqref{lin:eq_b} and \eqref{data:lin}. 
\smallbreak
As a first, note that Condition $(\cH_2)$ only depends on $\tV_0^2.$  In fact, putting Inequalities \eqref{L-1_j}
and \eqref{L-1_las} together yields for all $t,h\geq0,$
\begin{equation}\label{eq:unifVL}
\int_{t}^{t+h}\Bigl(\|\partial_tV_L^2\|_{B^s_{2,1}}+\|\partial_tV_L^2\|_{B^{s+2}_{2,1}}\Bigr)
\leq C\biggl(h+\sum_{j\geq0} e^{-c2^{2j}t}\bigl(1-e^{-c2^{2j}h}\bigr)2^{js}\|\Dj V_0^2\|_{L^2}\biggr)\cdotp
\end{equation}
Hence, one gets $(\cH_2)$ whenever\footnote{Lebesgue dominated convergence theorem ensures that $T_0$ is indeed positive.}
\begin{equation}\label{eq:condH2}
T\leq  T_0\defn \sup\Bigl\{h>0\,/\, h+\sum_{j\geq0} \bigl(1-e^{-c2^{2j}h}\bigr)2^{js}\|\Dj \wt V_0^2\|_{L^2}\leq\eta^2/C\Bigr\}\cdotp
\end{equation} 
\smallbreak
In order to  verify $(\cH_1)$ and $(\cH_4)$  for  $\wt V^1,$ 
let us apply  Proposition \ref{energ:est:V1:lem} to the first equation of \eqref{lin:eq_b}. 
Since, by the chain rule and embeddings, 
$$
\|\DIV(S_{11}(U))\|_{L^\infty}\leq C_\cO \bigl(1+\|V\|_{L^\infty}\bigr)\|\nabla_{t,x}V\|_{L^\infty}
\leq C_\cO \bigl(1+\|V\|_{L^\infty}\bigr)\bigl(\|\pt V\|_{B^s_{2,1}}+\|V\|_{B^{s+1}_{2,1}}\bigr),$$
we readily have
\begin{multline}\label{eq:tV1}   \lVert \tV^1 \rVert_{\widetilde{L}^\infty_t(\B{s+1}{2}{1})} \leq C_0 
       \biggl(\NB{s+1}{2}{1}{\tV_0^1}+\int^t_0  \NB{s+1}{2}{1}{\Theta^1}+
       C\int_0^t \Psi_1  
        \NB{s+1}{2}{1}{\tV^1}\biggr)\\\with  \Psi_1\defn 
         \bigl(1+\|V\|_{B^s_{2,1}}\bigr)\bigl(\|\pt V\|_{B^s_{2,1}}+\|V\|_{B^{s+1}_{2,1}}\bigr)\cdotp\end{multline} 
    Bounding $\Theta^1$ just follows from  Propositions \ref{propo_produc_BH} and \ref{propo_compo_BH}: we have
    $$\begin{aligned}
    \|f^1(U)\|_{B^{s+1}_{2,1}}&\leq C_\cO\bigl(\|V^1\|_{B^{s+1}_{2,1}}+\|V^2\|_{B^{s+1}_{2,1}}\bigr),\\
      \NB{s+1}{2}{1}{S^\alpha_{12}(U)\pal V^2}&\leq C_\cO
     \bigl(\NB{s+1}{2}{1}{V}\|{\nabla V^2}\|_{L^\infty}+ \NB{s+2}{2}{1}{V^2}\bigr)\cdotp
    \end{aligned}$$
    Hence, using embedding and H\"older inequality, 
    \begin{multline}\label{eq:Theta1}\|\Theta^1\|_{L^1_T(B^{s+1}_{2,1})}\leq C_\cO\Bigl(T\|V^1\|_{L^1_T(B^{s+1}_{2,1})}
    +\sqrt T\|V^2\|_{L^2_T(B^{s+1}_{2,1})} + \|V^2\|_{L^2_T(B^{s+1}_{2,1})}^2   \\+ \bigl(1+\|V^1\|_{L^\infty_T(B^{s+1}_{2,1})}\bigr)
    \bigl(\|V_S\|_{L^1_T(B^{s+2}_{2,1})}+ \|V^2_L\|_{L^1_T(B^{s+2}_{2,1})}\bigr)\biggr)\cdotp\end{multline}
    In what follows, we shall often use the fact that, owing to an interpolation inequality, 
    $$  \|V^2\|_{L^2_T(B^{s+1}_{2,1})}\leq \|V^2\|_{L^\infty_T(B^{s}_{2,1})}^{1/2}\|V^2\|_{L^1_T(B^{s+2}_{2,1})}^{1/2}. $$
Since Proposition \ref{lem_lin_est} guarantees that 
\begin{equation}\label{eq:V2Linfty}
\|V^2_L\|_{L^\infty_T(B^s_{2,1})} \leq C_0\|V^2\|_{B^s_{2,1}}\leq R/2,\end{equation} 
using also $(\cH_2)$ and $(\cH_3)$ yields  
    \begin{equation}\label{eq:V2L2}     \|V^2\|_{L^2_T(B^{s+1}_{2,1})}\leq R^{1/2}\eta. \end{equation}
    Hence, reverting to \eqref{eq:Theta1} and using also $(\cH_1)-(\cH_2),$ we conclude that
    $$\|\Theta^1\|_{L^1_T(B^{s+1}_{2,1})}\leq C_\cO\bigl(TR+\sqrt{RT}\eta
    +R\eta\bigr)\cdotp$$
        We observe that (use \eqref{eq:V2Linfty}, \eqref{eq:V2L2}, $(\cH_1)$ and $(\cH_4)$):
      \begin{equation}  \label{eq:Psi1} \int_0^T\Psi_1\leq R\bigl(\sqrt\eta + TR+\sqrt{RT}\eta\bigr)\cdotp
      \end{equation}  
   Hence, assuming that $T$ and $\eta$ have been chosen so that
     $$R\bigl(\sqrt\eta + TR+\sqrt{RT}\eta\bigr)\ll1$$
     and using Gronwall lemma in \eqref{eq:tV1}, we end up with 
    \begin{equation}\label{eq:V1fin}
        \lVert \tV^1 \rVert_{\widetilde{L}^\infty_T(\B{s+1}{2}{1})} 
        \leq \frac34 R+   C_\cO\bigl(TR+\sqrt{RT}\eta
    +R\eta\bigr)\cdotp\end{equation}
    Therefore, the first part of $(\cH_1)$ is satisfied whenever $\eta$ and $T$ are chosen so that 
    $$     C_\cO\bigl(T\sqrt R+\sqrt{T}\eta +\sqrt R\eta\bigr)\leq \sqrt R/4.$$
    Next, let us check that $\pt \wt V^1$ satisfies $(\cH_4).$ 
    We know from Inequality \eqref{eq:dttV1} that 
    $$
    \|\pt\wt V^1\|_{L_T^1(B^s_{2,1})}\leq C \bigl(1+\|V\|_{L^\infty_T(B^{s}_{2,1})}\bigr)
 \bigl(\|\wt V^1\|_{L_T^1(B^{s+1}_{2,1})}+\|\Theta^1\|_{L^1_T(B^{s}_{2,1})}\bigr)\cdotp$$
 Hence using $(\cH_1)$ and the inequalities we have just proved for $\wt V^1$ and $\Theta^1,$ we get
  $$    \|\pt\wt V^1\|_{L_T^1(B^s_{2,1})}\leq CR\bigl(TR + \eta(R+\sqrt{RT})\bigr)\cdotp$$ 
    It is clear that if one chooses $\eta$ and $T$ small enough, then one can ensures 
    $(\cH_4)$ for $\wt V^1.$
    \medbreak
    Next, let us prove   $(\cH_3).$  To start with, applying 
     Proposition \ref{energ:est:V2:lem} to \eqref{eq_small-V_S} and remembering that $\tV_S|_{t=0}=0$ immediately gives
     \begin{equation}\label{eq:tVS}   \lVert \tV_S \rVert_{\widetilde{L}^\infty_T(\B{s}{2}{1})}+ c\lVert \tV_S \rVert_{L^1_T(\B{s+2}{2}{1})} \le C_0\biggl(\int^T_0 \NB{s}{2}{1}{\Theta_S} +C\int_0^T\Psi_2 \NB{s}{2}{1}{\tV_S}\biggr),
\end{equation}
with $\Theta_S$ defined in \eqref{eq:Theta_S} and 
\begin{equation}\label{eq:Psi2}
 \Psi_2\defn  1+\normeinf{\pt(S^0_{22}(U))}+ (1+\NB{s}{2}{1}{V})^2
     \lVert V\Vert_{\B{s+1}{2}{1}}^2.\end{equation}
     From the chain rule, we have 
$$\normeinf{\pt(S^0_{22}(U))}\leq C_\cO\normeinf{\pt V}.$$
     Consequently, using the embedding $B^s_{2,1}\hookrightarrow L^\infty,$ hypotheses $(\cH_1)-(\cH_5)$  and Inequality \eqref{eq:V2L2} yields
     \begin{align}\label{eq:Phi2}
   \qquad\quad  \int_0^T\!\Psi_2&\leq T+C_\cO\biggl(\|\partial_tV\|_{L^1_T(B^s_{2,1})}\nonumber\\
 &\hspace{3cm}    +\bigl(1+\|V\|_{L^\infty_T(B^s_{2,1})}\bigr)^2\biggl(T\|V^1\|_{L^\infty_T(B^{s+1}_{2,1})}^2
     +\|V^2\|_{L^2_T(B^{s+1}_{2,1})}^2\biggr)\biggr)\nonumber\\
 &\leq  T+ C_\cO \bigl(\sqrt\eta+ R^4T+R^3\eta^2)\bigr)\cdotp     
\end{align}
All the terms of $\Theta_S$ may be bounded by taking advantage of Propositions \ref{propo_produc_BH} and 
\ref{propo_compo_BH}, hypotheses  $(\cH_1)-(\cH_5)$  and Inequality \eqref{eq:V2L2}. We get
$$
 \LpNBH{s}{2}{1}{
(\overline{S}^0_{22}-S^0_{22}(U))\pt V^2_L}{1} \le C_\cO\LpNB{s}{2}{1}{V}{\infty}\LpNB{s}{2}{1}{\pt V^2_L}{1}\leq CR\eta^2,
$$
\begin{align*}
\LpNB{s}{2}{1}{ \pal(Z^{\alpha \beta}(U)- \overline{Z^{\alpha \beta}}\pbe V^2_L)}{1}& \leq C_\cO\bigl(\LpNB{s}{2}{1}{V}{\infty}\LpNB{s+2}{2}{1}{V^2_L}{1}\\
&\quad+\bigl(\sqrt{T}\LpNB{s+1}{2}{1}{V^1}{\infty}\!+\!\LpNB{s+1}{2}{1}{V^2}{2}\bigr)\LpNB{s+1}{2}{1}{V^2_L}{2}\bigr)\\
&\leq C_\cO\bigl(R\eta^2+(R\sqrt T+\sqrt R\,\eta)\sqrt R\,\eta\bigr), 
\end{align*} 
\begin{align*}
    \LpNB{s}{2}{1}{S^\alpha_{21}(U)\pal V^1}{1}&\le C_\cO T(1+\LpNB{s}{2}{1}{V}{\infty}) \LpNB{s+1}{2}{1}{V^1}{\infty}\le C_\cO TR^2,\\
    \LpNB{s}{2}{1}{S^\alpha_{22}(U)\pal V^2}{1}& \le C_\cO \sqrt{T}(1+\LpNB{s}{2}{1}{V}{\infty}) \LpNB{s+1}{2}{1}{V^2}{2}\\
    & \le C_\cO \sqrt{T} R^{3/2}\eta,
\end{align*}
Likewise, we have
\begin{align*}
     \LpNB{s}{2}{1}{f^{21}(U)}{1} & \le C_\cO T\LpNB{s}{2}{1}{V}{\infty}\le   C_\cO T R\\
       \LpNB{s}{2}{1}{f^{22}(U,\nabla U^1)}{1} & \le C_{\cO,R}  T\LpNB{s}{2}{1}{(V, \n V^1)}{\infty}\le C_{\cO,R} R T,
    \end{align*}
    and, since $f^{23}$ is almost quadratic with respect to $\nabla U^2,$    
       \begin{align*}
        \LpNB{s}{2}{1}{f^{23}(U,\nabla U^2)}{1} &\le C_\cO  ( 1+\LpNB{s}{2}{1}{V}\infty)
        \bigl(\sqrt T+ \LpNB{s+1}{2}{1}{V^2}{2}\bigr)\LpNB{s+1}{2}{1}{V^2}{2}\\
    & \le  C_\cO R (\sqrt T + \sqrt R \eta)\sqrt R \eta.
\end{align*}
In the end, using \eqref{eq:Phi2}, assuming that $T+C_\cO(\sqrt \eta + R^4T+R^3\eta^2)$ is small enough
 and plugging all the above inequalities in \eqref{eq:tVS},  we conclude that 
\begin{equation*}
  \lVert \tV_S \rVert_{\widetilde{L}^\infty_T(\B{s}{2}{1})}+ \lVert \tV_S \rVert_{L^1_T(\B{s+2}{2}{1})} \le 
  C_\cO\bigl(R^2\eta^2  + TR^2 +  C_RTR\bigr)\cdotp
  \end{equation*}
Hence, if  $R^2\eta^2 +C_RTR +TR^2$ is small enough with respect to $\eta,$ then one can ensure 
that  $(\cH_3)$ is satisfied by $\tV_S,$ and thus also $(\cH_1),$ 
due to $\|V_L^2\|_{L^\infty_T(B^s_{2,1})}\leq C_0\|\wt V_0^2\|_{B^s_{2,1}}$ and \eqref{eq:wtdata}. 
To complete the proof of $\tV\in E^s_{T,R,\eta},$ it is only a matter of establishing that
\begin{equation}\label{eq:tVSdt}
\LpNB{s}{2}{1}{\partial_t\tV_S}{1}\leq\sqrt \eta.\end{equation}
To do this, we use \eqref{eq:dttV2} for $\tV_S$ which, in our context implies that
\begin{multline*}\|\pt\wt V_S\|_{L^1_T(B^s_{2,1})}\leq C\bigl(1+\|V\|_{L^\infty_T(B^{s}_{2,1})}\bigr)
\Bigl(\|\wt V_S\|_{L_T^1(B^{s+2}_{2,1})}
\\+\|\nabla\wt V_S\|_{L^2_T(B^{s}_{2,1})}\bigl(\sqrt T\|V^1\|_{L^\infty_T(B^{s+1}_{2,1})}+\|V^2\|_{L_T^2(B^{s+1}_{2,1})}\bigr) 
+\|\Theta_S\|_{L_T^1(B^s_{2,1})} \Bigr)\cdotp\end{multline*}
Taking advantage of \eqref{eq:tVSdt} and on the fact that we have just proved 
that $\|\Theta_S\|_{L_T^1(B^s_{2,1})}$ is $\cO(\eta),$  the above inequality implies that 
$$\|\pt\wt V_S\|_{L^1_T(B^s_{2,1})}\leq C_{\cO} R\eta \bigl(1 + R\sqrt T+\sqrt{R\eta}\bigr)\cdotp$$
If $\eta$ and $T$ has been chosen  sufficiently small, we thus have   \eqref{eq:tVSdt}, which completes the proof of 
$\tV\in E^s_{T,R,\eta}.$


\subsection{The proof of the local existence}

One can construct a sequence of approximate solutions by solving iteratively linear Systems of type \eqref{lin:eq_b}: we define the first term of the sequence to be
$V_0\defn (0,V_L^2)$ then, once $V_p$ is known, we set $U_p\defn \bar U+V_p$
and define $V_{p+1}$ to be the solution of 
\begin{equation} \begin{aligned}
  \label{def:V_p+1}
              &S^0_{11}(U_p)\pt V^1_{p+1} +\suma S^\alpha_{11}(U_p) \pal V^1_{p+1} =\Theta^1(U_p),  \\
	    &S^0_{22}(U_p)\pt V^2_{p+1} - \sumab\pal(Z^{\alpha \beta}(U_p)\pbe V^2_{p+1})=\Theta^2(U_p)     
	     \end{aligned}\end{equation}
where the right-hand sides are given by \eqref{def:Theta1:2:invari}, 
supplemented with  the initial data
\begin{align}    \label{def:data:Vp+1}
    V_{p+1|t=0}\defn S_{p+1} V_0, \esp{ where} S_p \hbox{ is the cut-off operator defined in \eqref{def:dot:S:j} }.
\end{align}
Since the initial data belong to all the Sobolev spaces (owing to the spectral cut-off), the classical theory 
for linear hyperbolic or parabolic systems in Sobolev spaces guarantees that, at each step, 
the above system has a global solution that belongs to all Sobolev spaces 
(see  \cite{GavSerre07,HajDanChe11} for the hyperbolic part of the system, and 
 \cite{Angel23,Kawashima83,Serr10} for the parabolic part). 
 Furthermore, in light of the previous subsection, since  for all $p\in\N,$
 $$\|S_p V_0^1\|_{B^{s+1}_{2,1}}\leq \| V_0^1\|_{B^{s+1}_{2,1}}\andf \|S_p V_0^2\|_{B^{s}_{2,1}}\leq \| V_0^2\|_{B^{s}_{2,1}},$$
 if taking $R\geq 2C_0( \| V_0^1\|_{B^{s+1}_{2,1}}+ \| V_0^2\|_{B^{s}_{2,1}}),$
 then one can find  positive real numbers $\eta,\,R$ so that for all $p\in\N,$
 $V_p\in E^s_{T,R,\eta}$ implies $V_{p+1}\in  E^s_{T,R,\eta}.$
 Hence, all terms of $(V_p)_{p\in\N}$ belong to $E^s_{T,R,\eta}.$ 
 
 In order to prove the convergence of this sequence,  it will be shown that it is a Cauchy sequence 
  in  the space 
  \begin{equation}\label{eq:FT}
  F_{T}^s\defn\Bigl\{V\defn(V^1,V^2):V\in {\cC}([0,T];\B{s}{2}{1}\times \B{s-1}{2}{1}),\; V^2\in L^1_T({\B{s+1}{2}{1}}) \Bigr\}\cdotp\end{equation}
  The reason for lowering regularity  is the usual loss of one derivative when proving stability estimates for quasilinear hyperbolic  systems. Here it is  harmless except  for $d=1$ and $s=1/2$  (see  the end of this section). 
  
  To simplify the presentation, we only consider the case where the  lower order
  terms $f^1$ and $f^2$ are identically zero. 
  Now, put $\dV_p \defn V_{p+1} -V_p$ and take the difference between the equation \eqref{def:V_p+1} for the $(p\! +\!1)$-th step and the
$p$-th step. We  get
\begin{equation}\begin{aligned}\label{eq:tV_p}
         S^0_{11}(U_p)\pt \dV^1_{p}& + \suma S^\alpha_{11}(U_p) \pal \dV^1_{p} =h_p,\\
           S^0_{22}(U_p)\pt \dV^2_{p}& - \sumab\pal(Z^{\alpha \beta}(U_p)\pbe \dV^2_{p})=g_p,
\end{aligned}\end{equation}
with $h_p=h_p^1+h_p^2$ , $g_p=g_p^1+g_p^2+g_p^3+g_p^4+ g_p^5+g_p^6$  and
\begin{align*}
    h_p^1&\defn -S^0_{11}(U_p)\suma \left( \wt S^\alpha_{11}(U_p)- \wt S^\alpha_{11}(U_{p-1})\right)\pal V^1_{p-1},\\
     h_p^2 &\defn - S^0_{11}(U_p)\suma \left(\wt S^\alpha_{12}(U_{p}) -  \wt S^\alpha_{12}(U_{p-1})\right) \pal V^2_{p-1}
     -\suma S^\alpha_{12}(U_{p})\pal \dV_{p-1}^2,
      \end{align*}
\begin{align*}
    g_p^1&\defn (S^0_{22}(U_{p-1})- S^0_{22}(U_{p}))\pt V_p^2,\ 
    &&g_p^2\defn \suma \left( S^\alpha_{22}(U_{p-1})- S^\alpha_{22}(U_{p})\right)\pal V^2_{p-1},\\
     g_p^3&\defn \sumab\!\pal\!\left(\bigl(Z^{\alpha \beta}(U_{p})-Z^{\alpha \beta}(U_{p-1})\bigr)\pbe V^2_{p}\right),\ 
     &&g_p^4\defn \suma\left(S^\alpha_{21}(U_{p-1}) - S^\alpha_{21}(U_{p})\right) \pal V^1_{p-1}, \\
    g_p^5&\defn  - \suma S^\alpha_{21}(U_{p}) \pal \dV^1_{p-1},\ 
    &&g_p^6\defn  - \suma S^\alpha_{22}(U_{p}) \pal \dV^2_{p-1}\cdotp
\end{align*}
 All the estimates established in Subsection \ref{ss:coupled} are valid for $V_p.$ In particular, 
defining $\Psi_1$ and $\Psi_2$ according to \eqref{eq:tV1} and \eqref{eq:Psi2}, 
we have 
$$\int_0^T\Psi_1\leq \log2\andf \int_0^T\Psi_2\leq \log2.$$
 Hence, applying  \eqref{est:tV1:lin} and  \eqref{est:tV2:lin}  with exponents $s$ and $s-1,$ respectively, 
 to the two equations of  \eqref{eq:tV_p}, we  get\footnote{Here we need $s-1$  to be 
larger than $-d/2,$ whence the restriction on the regularity exponent if $d=1.$}
\begin{align}
         \label{est:tVp1:1}
       &\LpNB{s}{2}{1}{\dV^1_p}{\infty} \le 2C_0 \biggl(\NB{s}{2}{1}{\dV^1_{0,p}}+\int^{T}_0 \NB{s}{2}{1}{h_p}\biggr),
        \\          \label{est:tVp2:1}
         & \LpNB{s-1}{2}{1}{\dV^2_p}{\infty}+\frac{c}{2}  \LpNB{s+1}{2}{1}{\dV^2_p}{1} \le 2C_0 \biggl(\NB{s-1}{2}{1}{\dV^2_{0,p}}+\int^{T}_0 \NB{s-1}{2}{1}{g_p}\biggr)\cdotp\end{align}
    In order to estimate the terms on the right-hand side of \eqref{est:tVp1:1} and \eqref{est:tVp2:1},
    we shall remember all the time that the terms $V_p$ are in the set $E^s_{T,R,\eta}.$ 
     Now, leveraging the product and composition laws recalled in Appendix, we  get
  \begin{multline*}
        \LpNB{s}{2}{1}{h_p^1}{1}\le C(1+\LpNB{s}{2}{1}{V_p}{\infty}) \Bigl( T \LpNB{s}{2}{1}{\dV_{p-1}^1}{\infty}\\
      +\sqrt{T}\LpNB{s}{2}{1}{\dV_{p-1}^2}{2}\Bigr)\LpNB{s+1}{2}{1}{V_p^1}{\infty},
\end{multline*}
      \begin{multline*}
         \LpNB{s}{2}{1}{h_p^2}{1}\le C(1+\TLpNB{s}{2}{1}{V_p}{\infty}) \left( \sqrt{T}\LpNB{s+1}{2}{1}{\dV^1_{p-1}}{\infty}\LpNB{s+1}{2}{1}{V_p^2}{2}\right.\\
        \left.+ \LpNB{s}{2}{1}{\dV_{p-1}^2}{2}  \LpNB{s+1}{2}{1}{V_p^2}{2}+ \LpNB{s+1}{2}{1}{\dV_{p-1}^2}{1}\right)\cdotp
\end{multline*}
Next, Inequality \eqref{product_propo4} combined with suitable embedding 
and Proposition \ref{propo_compo_BH} gives
\begin{align*}
    \LpNB{s-1}{2}{1}{g_p^1}{1}&\le C\Bigl(\|S^0_{22}(U_{p-1})-S^0_{22}(U_p)\|_{L^\infty_T(B^{-1}_{\infty,\infty})}\LpNB{s}{2}{1}{\pt V_p^2}{1}\\&\hspace{4cm}
 +\|\pt V_p^2\|_{L_T^1(L^\infty)}\|S^0_{22}(U_{p-1})-S^0_{22}(U_p)\|_{L^\infty_T(B^{s-1}_{2,1})}\Bigr)\\
&\leq C\Bigl(1+\|(V_{p-1},V_p)\|_{L^\infty_T(B^s_{2,1})}\Bigr)
\LpNB{s-1}{2}{1}{\dV_{p-1}}{\infty} \LpNB{s}{2}{1}{\pt V_p^2}{1}.\end{align*}
We have  by Propositions  \ref{propo_produc_BH}  and  \ref{propo_compo_BH}, 
\begin{align*}
        &\LpNB{s-1}{2}{1}{g_p^3}{1} \le C  \bigl(\sqrt{T}\LpNB{s}{2}{1}{\dV^1_{p-1}}{\infty}+ \LpNB{s}{2}{1}{\dV_{p-1}^2}{2}  \bigr)\LpNB{s+1}{2}{1}{V_p^2}{2},\\
      &\LpNB{s-1}{2}{1}{(g_p^2,g_p^4)}{1}\le C \LpNB{s-1}{2}{1}{\dV_{p-1}}{\infty}
      \bigl(T\LpNB{s+1}{2}{1}{ V_p^1}{\infty}+\sqrt{T}\LpNB{s+1}{2}{1}{ V_p^2}{2} \bigr),\\
      &\Vert g_p^5\Vert_{L^1_{T}(\B{s-1}{2}{1})}\le  C \left( T+T\LpNB{s}{2}{1}{V_p}{\infty}\right)\LpNB{s}{2}{1}{\dV^1_{p-1}}{\infty}, \\
     & \Vert g_p^6\Vert_{L^1_{T}(\B{s-1}{2}{1})}\le  C \left( \sqrt{T}+\sqrt{T}\LpNB{s}{2}{1}{V_p}{\infty}\right)\LpNB{s}{2}{1}{\dV^2_{p-1}}{2}. \end{align*}
     Plugging the above inequalities in \eqref{est:tVp1:1} and \eqref{est:tVp2:1}, 
     taking advantage of the  boundedness of $(V_p)$ in $E^s_{T,R,\eta},$ 
  and    using \eqref{eq:V2L2},  we obtain that 
\begin{multline*}
    \LpNB{s}{2}{1}{\dV^1_p}{\infty}\le 
    C\Bigl(\NB{s}{2}{1}{\dV^1_{0,p}}+ \bigl(R^2T+R^{3/2}T^{1/2}\eta\bigr)\LpNB{s}{2}{1}{\dV^1_{p-1}}{\infty}
    \\+\bigl(R^2T^{1/2}+R^{3/2}\eta\bigr)\LpNB{s}{2}{1}{\dV^2_{p-1}}{2}+R\LpNB{s+1}{2}{1}{\dV^2_{p-1}}{1}\Bigr),
     \end{multline*}
     \begin{multline*}
   \LpNB{s-1}{2}{1}{\dV^2_p}{\infty}+\LpNB{s+1}{2}{1}{\dV^2_p}{1}\le C\Bigl(\NB{s-1}{2}{1}{\dV^2_{0,p}}+\bigl(\sqrt{\eta}R+\eta\sqrt{RT} +TR\bigr) \LpNB{s}{2}{1}{\dV^1_{p-1}}{\infty}\\
 +\bigl(R\sqrt T +\eta\sqrt R\bigr)\|\dV^2_{p-1}\|_{L^2_T(B^{s}_{2,1})}
 +\bigl(R\sqrt \eta +TR\bigr)\|\dV^2_{p-1}\|_{L^\infty_T(B^{s-1}_{2,1})}\Bigr)\cdotp
 \end{multline*}
 Let us set for some small enough parameter $\varepsilon$: 
 $$
 X_p(T)\defn \varepsilon R^{-1}    \LpNB{s}{2}{1}{\dV^1_p}{\infty}+   \LpNB{s-1}{2}{1}{\dV^2_p}{\infty}+\LpNB{s+1}{2}{1}{\dV^2_p}{1}.
 $$
 as well as  the following interpolation inequality: 
\begin{align*}
 \LpNB{s}{2}{1}{\dV_{p-1}^2}{2}\le \sqrt{ \LpNB{s-1}{2}{1}{\dV_{p-1}^2}{\infty} \LpNB{s+1}{2}{1}{\dV_{p-1}^2}{1} }\le X_{p-1}(T)
 \end{align*}

 From the above two inequalities, we deduce after some simplification that for all $p\in\N,$
 \begin{multline*}
 X_p(T) \leq C\Bigl( \varepsilon R^{-1}\NB{s}{2}{1}{\dV^1_{0,p}}+\NB{s-1}{2}{1}{\dV^2_{0,p}}\\
+ \Bigl(\varepsilon^{-1}(R^2\sqrt{\eta} + \eta R^{3/2}T^{1/2}+R^2T)+\varepsilon +\sqrt T R+R\sqrt\eta\Bigr)
X_{p-1}(T) \Bigr)\cdotp\end{multline*}
Therefore, choosing $\varepsilon =1/(4C)$ then reducing $T$ and $\eta$ if needed, we end up with 
\begin{equation}\label{eq:Xp}
X_p(T)\leq  C\Bigl( \varepsilon R^{-1}\NB{s}{2}{1}{\dV^1_{0,p}}+\NB{s-1}{2}{1}{\dV^2_{0,p}}\Bigr) +\frac12 X_{p-1}(T).
\end{equation}
We observe that 
$$\NB{s}{2}{1}{\dV^1_{0,p}}\simeq  2^{-p} \bigl(2^{p(s+1)}\|\Delta_p V^1_0\|_{L^2}\bigr)\andf
\NB{s-1}{2}{1}{\dV^2_{0,p}}\simeq 2^{-p} \bigl(2^{ps}\|\Delta_p V^2_0\|_{L^2}\bigr)\cdotp $$
Hence, summing up Inequality \eqref{eq:Xp} from $p=1$ to $p=\infty,$ we discover that
$\sum X_p(T)$ is a convergent series. 
Therefore  $(V_{p})_{p\in\N}$ is a Cauchy sequence in $F_{T}^s$.  Hence, there
exists a function $V \in F_{T}^s$ such that $V_p \longrightarrow V$ strongly  in $F_{T}^s$
 as $p \longrightarrow \infty$. Now,  from a functional analysis argument totally similar to that of  \cite[Chap. 10]{HajDanChe11}, one can prove that 
 $V$ satisfies  $(\mathcal{H}_1)- (\mathcal{H}_5$).
 Furthermore  the  strong convergence in $F_{T}^s$ combined with the uniform bounds in $E^s_{T,R,\eta}$ 
enable us to pass to the limit in the approximate system, and to conclude that $V$ is a
solution to \eqref{Eq_b} supplemented with initial data $V_0.$ 
There only remains to check the time continuity of $V$ with values in $B^{s+1}_{2,1}\times B^s_{2,1},$
and the fact that $V^2$ belongs to $L^1(0,T;B^{s+2}_{2,1}).$ These two properties may be proved by following 
the method of \cite[Chap. 10]{HajDanChe11}. 
\medbreak
As for  the proof of uniqueness, we set $\dV \defn V_2-V_1$, where $V_1$ and $V_2$ are two
solutions to the system $\eqref{Eq_b}$  subject to the same initial data.
Then the error solution $\dV$ satisfies the equation \eqref{eq:tV_p} where instead of $V_p,V_{p-1},\dV_p,\dV_{p-1}$ we have $ V, \dV $ respectively. In the same way,  one can find some $T_1\leq T$ such that  \eqref{eq:Xp} is satisfied with 
 $ \dV_p=\dV_{p-1}=\dV$ and  no first term in the right-hand side.  
 This easily implies that   $\dV=0$ in $F_{T_1}^s,$ whence uniqueness  on $[0,T_1]\times\Rd.$ Using a continuity argument, one can then get  uniqueness on the whole interval of existence.


\subsection{A continuation criterion}

This section is devoted to proving the last part of   Theorem \ref{Thm:loc:scri}.
Let us explain how to proceed in the general case of Conditions {\bf B},  leaving at the end the particular cases mentioned in 
Remark \ref{rmq:improved}. 

Let  $V$ be a solution of \eqref{Eq_b} on $[0,T^*[\times\R^d$ that belongs to $E_T^s$ for all $T<T^*.$ 
We shall prove that $\|V\|_{E_{T^*}}^s$ is finite, which combined with classical arguments will entail
that the solution may be continued beyond $T^*.$ The starting point is Inequality 
\eqref{est:V1:Li:L2} with $\Theta^1=\Theta^1(U)$ defined in \eqref{def:Theta1:2:invari}. 
Compared to the proof of Proposition \ref{energ:est:V1:lem}, the difference is that the commutators 
$R_j^{11}$ and the terms in $\Theta^1$ are going to be bounded according to \eqref{comm:est:a:b:sig>0} 
and \eqref{product_propo1}, respectively. 
In this way, for some constant depending only on $s$ and on the range of $[0,T^*[\times\R^d$ by $V$ (which is bounded), 
we get, denoting $\wt S_{11}^\alpha\defn (S_{11}^0)^{-1}S_{11}^\alpha,$ 
$$\begin{aligned}
\|R_j^{11}\|_{L^2}&\leq C c_j2^{-j(s+1)} \sum_\alpha\Bigl(\|\nabla(\wt S_{11}^\alpha(U))\|_{L^\infty}\|\nabla V^1\|_{B^s_{2,1}}
+\|\nabla V^1\|_{L^\infty}\|\nabla(\wt S_{11}^\alpha(U))\|_{B^s_{2,1}}\Bigl)\\
&\leq  C c_j2^{-j(s+1)}\bigl(\|\nabla V\|_{L^\infty}\|\nabla V\|_{B^s_{2,1}}
+\|\nabla V^1\|_{L^\infty}\|V^1\|_{B^{s+1}_{2,1}}\bigr)\cdotp\end{aligned}$$
As for $\Theta^1,$ using the decomposition 
$$\Theta^1=f^1(U)-\sum_\alpha \overline{S_{12}^\alpha}\partial_\alpha V^2-
\sum_\alpha \bigl(S_{12}^\alpha(U)-\overline{S_{12}^\alpha}\bigr)\partial_\alpha V^2$$
and remembering that $f^1(\bar U)=0,$  we get
$$\|\Theta^1\|_{B^{s+1}_{2,1}}\leq C\Bigl(\|V\|_{B^{s+1}_{2,1}}+\|\nabla V^2\|_{B^{s+1}_{2,1}}
+\|\nabla V^2\|_{L^\infty}\|V\|_{B^{s+1}_{2,1}}\Bigr)\cdotp$$
Hence, plugging these two inequalities in \eqref{est:V1:Li:L2}, multiplying by $2^{j(s+1)}$
and summing up on $j\ge -1,$ we get for all $t\in[0,T^*[,$
\begin{multline}\label{eq:V1}\|V^1(t)\|_{B^{s+1}_{2,1}} \leq C_0\|V^1_0\|_{B^{s+1}_{2,1}} \\+
C\int_0^t\Bigl(\|\DIV(S_{11}(U))\|_{L^\infty}\|V^1\|_{B^{s+1}_{2,1}}+
(1+\|\nabla V\|_{L^\infty})\|V\|_{B^{s+1}_{2,1}}\Bigr)+C\int_0^t\|V^2\|_{B^{s+2}_{2,1}}.\end{multline}
In order to bound $V^2,$ we start from Inequality \eqref{esti_v_2-2'} and use
the commutator estimate \eqref{comm:est:a:b:sig>0b} and product estimate  \eqref{product_propo1}.
Denoting $\wt Z^{\alpha\beta}\defn (S^0_{22})^{-1}Z^{\alpha\beta},$ we get
  $$\begin{aligned}
\|R_j^{2}\|_{L^2}&\leq C\sum_{\alpha,\beta}\bigl(\|[\Dj,\wt Z^{\alpha\beta}(U)]\partial_\alpha\partial_\beta V^2\|_{L^2}
+\|\Dj ((S_{22}^0)^{-1}\partial_\alpha(Z^{\alpha\beta}(U))\partial_\beta V^2)\|_{L^2}\bigr)\\
&\leq Cc_j2^{-js}\sum_{\alpha,\beta} \Bigl(\|\nabla(\wt Z^{\alpha\beta}(U))\|_{L^\infty}\|\nabla V^2\|_{B^s_{2,1}}
+\|\nabla V^2\|_{L^\infty}\|\nabla(\wt Z^{\alpha\beta}(U))\|_{B^s_{2,1}}\\
&\hspace{3cm}+\|\partial_\alpha(Z^{\alpha\beta}(U))\|_{L^\infty}\|\nabla V^2\|_{B^s_{2,1}}
+\|\nabla V^2\|_{L^\infty}\|\nabla (Z^{\alpha\beta}(U))\|_{B^s_{2,1}}\Bigr)\\
&\leq Cc_j2^{-js}\Bigl(\|\nabla V\|_{L^\infty}\|\nabla V^2\|_{B^s_{2,1}}+\|\nabla V^2\|_{L^\infty}\|\nabla V\|_{B^s_{2,1}}\Bigr)\cdotp
\end{aligned}$$
Next, recall that
$$
\Theta^2=f^{21}(U)+f^{22}(U,\nabla U^1)+f^{23}(U,\nabla U^2)-
\sum_\alpha\bigl(S^\alpha_{21}(U)\partial_\alpha V^1+S^\alpha_{22}(U)\partial_\alpha V^2\bigr)\cdotp
$$
Let us denote $M\defn \|\nabla V^1\|_{L^\infty([0,T^*)\times\R^d)}$ (a finite real number by assumption).
Leveraging the usual product and composition laws, and our specific assumptions on $f^2,$ we easily get:
\begin{multline*}
\|\Theta^2\|_{B^s_{2,1}}\leq C_M\bigl(\|V^1\|_{B^{s+1}_{2,1}}+\|V^2\|_{B^s_{2,1}}\bigr)+C\|\nabla V^2\|_{L^\infty}^2\|V\|_{B^s_{2,1}}\\+ 
C\Bigl( \|V\|_{B^s_{2,1}}
+\bigl(1+\|\nabla V^2\|_{L^\infty}\bigr)\|\nabla V^2\|_{B^s_{2,1}}
+\|\nabla V\|_{L^\infty}\|V\|_{B^s_{2,1}}+\|V\|_{B^{s+1}_{2,1}}\Bigr)\cdotp\end{multline*}
Inserting the above inequalities in \eqref{esti_v_2-2'}, we conclude that for all $t\in[0,T^*[,$ 
\begin{multline}\label{eq:V2}\|V^2(t)\|_{B^{s}_{2,1}} + \int_0^t\|V^2\|_{B^{s+2}_{2,1}} \leq C_0\|V^2_0\|_{B^{s}_{2,1}} 
+C_M\int_0^t\|(V^1,V^2)\|_{B^{s+1}_{2,1}\times B^s_{2,1}}\\
+C\int_0^t\bigl(\|\partial_tS^0_{22}(U)\|_{L^\infty}\|V^2\|_{B^s_{2,1}}+(1+\|\nabla V\|_{L^\infty}^2)\|V\|_{B^s_{2,1}}\bigr)\\
+C\int_0^t\Bigl((1+\|\nabla V\|_{L^\infty})\|\nabla V^2\|_{B^s_{2,1}}
+\|\nabla V^2\|_{L^\infty}\|V\|_{B^{s+1}_{2,1}}
+\|V\|_{B^{s+1}_{2,1}}\Bigr)\cdotp\end{multline}
Combining Inequalities \eqref{eq:V1} and \eqref{eq:V2}   and using repeatedly the fact that for all $\varepsilon>0,$
$$\|V^2\|_{B^{s+1}_{2,1}}\leq \varepsilon \|V^2\|_{B^{s+2}_{2,1}}   +C\varepsilon^{-1}\|V^2\|_{B^{s}_{2,1}}, $$
yields  for all $t\in[0,T^*[,$ 
\begin{multline}\label{eq:boundfst}
\|V\|_{F^s_t}\leq C_0\|V_0\|_{B^{s+1}_{2,1}\cap B^{s}_{2,1}} 
+C_M\int_0^t\|V\|_{B^{s+1}_{2,1}\times B^s_{2,1}}\\
+C\int_0^t \bigl(\|\DIV(S_{11}(U))\|_{L^\infty}+\|\partial_tS^0_{22}(U)\|_{L^\infty}
+1+\|\nabla V\|_{L^\infty}^2\bigr)\|V\|_{B^{s+1}_{2,1}\times B^s_{2,1}}.\end{multline}
Using Gronwall inequality and the assumptions of the last part of Theorem \ref{Thm:loc:scri}, one can conclude 
that $\|V\|_{E^s_{T^*}}<\infty.$

Note that if $f^{22}$ depends at most quadratically of $\nabla U^1,$ then we have:
$$\|f^{22}(U,\nabla U^1)\|_{B^s_{2,1}} \!\leq \!C\Bigl(\|\nabla V^1\|_{B^s_{2,1}}
+\|\nabla V^1\|_{L^\infty}\|\nabla V^1\|_{B^s_{2,1}}\!
+\bigl(\|\nabla V^1\|_{L^\infty}+\|\nabla V^1\|_{L^\infty}^2\bigr) \|V\|_{B^s_{2,1}}\Bigr)\cdotp$$
Hence the second term in the right-hand side of \eqref{eq:boundfst} is not needed.

Further note that if $f^{23}$ is affine in $\nabla U^2,$   and $S_{22}^0,$ $Z^{\alpha\beta}$ and $\wt S_{11}^\alpha
\defn(S_{11}^0)^{-1}S^\alpha_{12}$  only depend on $U^1,$ 
 then there are no terms  
$\|\nabla V^2\|_{L^\infty}\|V^2\|_{B^{s+1}_{2,1}}$ or 
 $\|\nabla V^2\|_{L^\infty}^2\|V^2\|_{B^{s}_{2,1}}$   in the right-hand sides of \eqref{eq:V1} 
and \eqref{eq:V2}, which allows to replace $\|\nabla V\|_{L^\infty}^2$ in \eqref{eq:boundfst}
by just  $\|\nabla V^1\|_{L^\infty}^2.$ In fact, in order to prove the counterpart of \eqref{eq:V1}, one has to start with the identity
$$\partial_t V^1+\sum_\alpha \wt  S_{11}^\alpha \partial_\alpha V^1=-\sum_\alpha \wt S^\alpha_{12}(U^1)\partial_\alpha V^2.$$

Let us finally explain why $\|V\|_{E^s_{T^*}}<\infty$ allows to continue the solution beyond $T^*.$ 
On the one hand, the fact that $\partial_tV\in L^1(0,T^*;B^s_{2,1})$ and $V\in \cC_b([0,T^*);B^s_{2,1})$ 
ensures that $V$ has a limit $V_{T^*}$ in $B^s_{2,1}$ when $t\to (T^*)^-.$ 
On the other hand the boundedness of $V^1$ in $B^{s+1}_{2,1}$ ensures that
this limit actually belongs to $B^{s+1}_{2,1}\times B^s_{2,1}.$
Now, solving the Cauchy problem for \eqref{Eq_b} with initial data $V_{T^*}$  gives a continuation of the solution 
in the desired space.

  \subsection{The endpoint $d=1$ and $s=1/2$}
  
  The only parts that failed are  the convergence of $(V_p)_{p\in\N}$ and uniqueness in the space $F_T^{\frac12}$
  defined in \eqref{eq:FT} since the product maps $B^{\frac12}_{2,1}\times B^{-\frac12}_{2,1}$ 
  in   $B^{-\frac12}_{2,\infty}$ rather than in  $B^{-\frac12}_{2,1}.$
 A way to overcome the difficulty is to first smooth out the data  and to produce a sequence $(V_p)_{p\in\N}$
 of true solutions of \eqref{Eq_b} corresponding to these smooth data, that will be 
 bounded in $E^{\frac12}_{T,R,\eta}$ for some suitable $T$ and $\eta$ \emph{independent of $p$}
 (at this point we have to take advantage of the continuation criterion), 
 then to use compactness arguments to pass to the limit up to subsequence. 
 The reader is referred to the forthcoming section  where a similar method is used
 to  prove existence in the critical regularity framework.
 
 As for the uniqueness, it can  be treated by following the ideas of Subsection \ref{sec:uni:crit}, based
 on a suitable logarithmic interpolation inequality, and Osgood lemma.


\section{Local existence in critical spaces}
\label{sec:proof:loc:cri}

Here we want to establish the local well-posedness  for System \eqref{Eq_b} under 
Assumption \textbf{C} in what we  called \emph{a critical functional framework}
 by analogy with  the compressible Navier-Stokes system studied in \cite{Dan01loc}.
More precisely, we consider initial data  $(U^1_0,U^2_0)=(\bar U^1,\bar U^2)+ (V^1_0,V^2_0)$ 
such that   $V^1_0\in\dot B^{\frac d2}_{2,1}$ and  $V^2_0\in\dot B^{\frac d2-1}_{2,1}.$

 \subsection{A priori estimates}
As a first, we aim at establishing a priori estimates for System  \eqref{Eq_b}. For simplicity,  we omit the lower order source term $f$
satisfying Assumption \textbf{C}. 
Then, we consider  a  smooth solution  $V=(V^1,V^2)$ on $[0,T]\times \Rd$ 
 and fix some real numbers $M_1\geq1,$ $M_2>0$ and $M_3>0$ such that
\begin{align}    \label{def_M}
      \|V^1\|_{L_T^\infty(B^{\cd}_{2,1})}\leq M_1,\quad   
     \LpNBH{\cd+1}{2}{1}{V^2}{1}\leq M_2\andf \LpNBH{\cd-1}{2}{1}{\pt V^2}{1}\leq M_3.
    \end{align}
We assume   that there exists a bounded open 
   subset  $\cO^1$ of $\mathcal{U}^1$ satisfying $ \overline{\cO}^1\subset \mathcal{U}^1$ and 
 \begin{align}    \label{def_U_O_1}
       U^1(t,x) \in \cO^1 \esp{for all} t\in [0,T]\andf x\in \Rd. \end{align}
Hence, there exists a constant $C$ such that 
\begin{align}    \label{equi_S02}
      C^{-1}I_{n_1}\le S^{0}_{22} (U^1(t,x))\le CI_{n_1} \esp{for all} t\in [0,T]\andf x\in \Rd.    \end{align}
   
   \paragraph{\bf Step $1$: Estimates for $V^1$.}    The first step is to prove:
\begin{proposition}\label{energy_V^1}
  Let Assumption \textbf{C} be in force and $d\ge 2$. 
    Then, there exists a constant $C=C(\cO^1)$ such that for all $m\in \mathbb{Z}$, the following inequality holds true.
    \begin{equation}
	  \label{Esti_V_1}
     \sum_{j\ge m}2^{j\frac d2}\Vert V^1_j\Vert_{L^\infty_T(L^2)}
       \le \sum_{j\ge m}2^{j\frac d2}\normede{V^1_{0,j}}+C\bigl(1+\|V^1\|_{L^\infty_T(\dot B^{\frac d2}_{2,1})}\bigr)M_2
	   \end{equation}
     with $ V_j^1 \defn \DDj V^1$ and   $ V_{0,j}^1 \defn \DDj V_0^1$
     (where $\ddj$ stands for   the homogeneous Littlewood-Paley  operator defined in Appendix).\end{proposition}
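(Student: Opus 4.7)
The plan is to perform a Littlewood--Paley localized energy estimate for the first equation of \eqref{Eq_b:V1:V2} under Assumption \textbf{C}. Multiplying that equation by $(S^0_{11}(U))^{-1}$ and dropping the source $f^1$ (which is treated separately) reduces the analysis to
$$\partial_t V^1 + \sum_\alpha \wt S^\alpha_{11}(U^2)\,\partial_\alpha V^1 = -\sum_\alpha \wt S^\alpha_{12}(U^1)\,\partial_\alpha V^2,$$
where, by Assumption \textbf{C}, $\wt S^\alpha_{11}$ is symmetric and affine in $U^2$ (independent of $U^1$) while $\wt S^\alpha_{12}$ depends only on $U^1$. Applying the homogeneous block $\ddj$ gives
$$\partial_t V^1_j + \sum_\alpha \wt S^\alpha_{11}(U^2)\,\partial_\alpha V^1_j = R^c_j + R^s_j,$$
with $R^c_j \defn \sum_\alpha [\wt S^\alpha_{11}(U^2),\ddj]\partial_\alpha V^1$ and $R^s_j \defn -\ddj\bigl(\sum_\alpha \wt S^\alpha_{12}(U^1)\partial_\alpha V^2\bigr)$. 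Taking the $L^2$ inner product with $V^1_j$, using the symmetry of $\wt S^\alpha_{11}(U^2)$ and integrating by parts, I obtain
$$\frac{d}{dt}\|V^1_j\|_{L^2} \leq \tfrac12\|\div\wt S_{11}(U^2)\|_{L^\infty}\|V^1_j\|_{L^2} + \|R^c_j\|_{L^2} + \|R^s_j\|_{L^2}.$$

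Three ingredients close the bookkeeping. First, the affineness of $\wt S^\alpha_{11}$ in $U^2$ makes $\div\wt S_{11}(U^2)$ a linear function of $\nabla V^2$, so the embedding $\dot B^{d/2}_{2,1}\hookrightarrow L^\infty$ gives $\int_0^T\|\div\wt S_{11}(U^2)\|_{L^\infty}\leq C\|V^2\|_{L^1_T(\dot B^{d/2+1}_{2,1})}\leq CM_2$. Second, since $\wt S^\alpha_{11}(U^2)-\wt S^\alpha_{11}(\bar U^2)$ is a \emph{linear} function of $V^2$, a critical homogeneous commutator estimate yields $\sum_j 2^{jd/2}\|R^c_j\|_{L^2} \leq C\|V^2\|_{\dot B^{d/2+1}_{2,1}}\|V^1\|_{\dot B^{d/2}_{2,1}}$, whose $L^1_T$-integral is $\leq CM_1 M_2$. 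Third, the product and composition laws in $\dot B^{d/2}_{2,1}$, together with $\|\wt S^\alpha_{12}(U^1)-\wt S^\alpha_{12}(\bar U^1)\|_{\dot B^{d/2}_{2,1}}\leq C\|V^1\|_{\dot B^{d/2}_{2,1}}$, give $\sum_j 2^{jd/2}\|R^s_j\|_{L^2} \leq C(1+\|V^1\|_{\dot B^{d/2}_{2,1}})\|V^2\|_{\dot B^{d/2+1}_{2,1}}$, so that its $L^1_T$-norm is bounded by $C(1+M_1)M_2$.

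Integrating in time and running the Gronwall argument on the scalar ODE for $\|V^1_j\|_{L^2}$ yields
$$\sum_{j\geq m}2^{jd/2}\|V^1_j\|_{L^\infty_T(L^2)} \leq e^{CM_2/2}\Bigl(\sum_{j\geq m}2^{jd/2}\|V^1_{0,j}\|_{L^2} + C(1+M_1)M_2\Bigr).$$
To recover the coefficient $1$ in front of the initial-data sum as stated, I split $e^{CM_2/2}=1+(e^{CM_2/2}-1)$ and use $e^{CM_2/2}-1\leq C'M_2$ (legitimate since $M_2$ remains in a bounded range along the fixed-point scheme where this proposition will be used) combined with $\|V^1_0\|_{\dot B^{d/2}_{2,1}}\leq \|V^1\|_{L^\infty_T(\dot B^{d/2}_{2,1})}$; the excess contribution is absorbed into the $(1+\|V^1\|_{L^\infty_T(\dot B^{d/2}_{2,1})})M_2$ term.

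The main obstacle is the commutator estimate for $R^c_j$ in the critical homogeneous setting: neither $\nabla V^1$ nor $\nabla\bigl(\wt S^\alpha_{11}(U^2)\bigr)$ is in $L^\infty$ at this regularity level, so the naive pointwise commutator bounds fail. The restriction $d\geq 2$ in the proposition reflects precisely the breakdown of the critical homogeneous commutator estimate in $\dot B^{d/2}_{2,1}$ for $d=1$; for $d\geq 2$ one combines Bony's decomposition with the affine structure encoded in Assumption \textbf{C} to land the commutator safely in $\dot B^{d/2}_{2,1}$ with constants depending only on $\cO^1$.
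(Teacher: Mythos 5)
Your decomposition, localization, energy identity, and the three pointwise/Besov estimates (divergence of $\wt S^\alpha_{11}(U^2)$, the commutator $R^c_j$, the source $R^s_j$) are exactly the ones the paper uses; the affine structure of $\wt S^\alpha_{11}$ in $U^2$ and the $U^1$-only dependence of $\wt S^\alpha_{12}$ are exploited in the same way. The divergence from the paper is in the final bookkeeping, and it creates a small but real gap.

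You close the estimate with a Gronwall argument on each dyadic block, which produces the prefactor $e^{CM_2/2}$ in front of the initial-data sum. To recover coefficient $1$ as in \eqref{Esti_V_1}, you then invoke $e^{CM_2/2}-1\le C'M_2$ and push the excess into the error term. But $C'$ then depends on $M_2$, whereas the proposition asserts $C=C(\cO^1)$ only; your own caveat (``legitimate since $M_2$ remains in a bounded range along the fixed-point scheme'') concedes the point. The paper avoids Gronwall entirely: in \eqref{cri:eq_V1_1} the dangerous term $\int_0^t\|\nabla A^\alpha_{11}(U^2)\|_{L^\infty}\|V^1_j\|_{L^2}$ is first \emph{summed over $j\ge m$}, which by definition of the Besov norm turns it into $\int_0^t\|\nabla A^\alpha_{11}(U^2)\|_{L^\infty}\|V^1\|_{\dot B^{d/2}_{2,1}}\le CM_2\|V^1\|_{L^\infty_T(\dot B^{d/2}_{2,1})}$ directly, with no self-referential term and hence no Gronwall. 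This gives coefficient $1$ on the data and a constant genuinely independent of $M_1,M_2$. So: correct ingredients, but you should replace the Gronwall step by the ``sum over $j$ before integrating in time'' trick; as written your conclusion is formally weaker (constant depending on $M_2$) than what is stated.

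Two secondary remarks. First, $M_1\ge1$ by \eqref{def_M}, so your bound $C(1+M_1)M_2$ may as well be $CM_1M_2$, but both are of the claimed form $C(1+\|V^1\|_{L^\infty_T(\dot B^{d/2}_{2,1})})M_2$. Second, the heuristic you give for $d\ge2$ (failure of the critical homogeneous commutator estimate in $d=1$) is plausible but not what the paper's cited commutator lemma requires at $\sigma=\cd$; the hypothesis $d\ge2$ is inherited from the overall critical framework (e.g.\ the uniqueness argument in Subsection~\ref{sec:uni:crit}) rather than being forced here, so the justification is overstated.
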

    \begin{proof}
  Let $A^\alpha_{11} \defn (S^0_{11})^{-1} S^\alpha_{11} $ and $A^\alpha_{12} \defn (S^0_{11})^{-1} S^\alpha_{12}.$
  According to Assumption \textbf{C},  System \eqref{Eq_b} may be written
 \begin{align*}
      \pt V^1  +\suma \left( A^\alpha_{11} (U^2) \pal V^1 + A^\alpha_{12}(U^1) \pal V^2\right)=0.
 \end{align*}
	  	    Applying $\ddj$  to the above  equation yields for all $j\in\Z,$ 
     \begin{align*}
         \pt V^1_j + \suma A^\alpha_{11}(U^2)\pal V^1_j= -\suma\DDj(A^\alpha_{12}(U^1) \pal V^2)+ R_j^{11}\text{ with } R_j^{11} \defn [A_{11}^\alpha(U^2),\DDj]\pal V^1. 
     \end{align*}
Now, arguing  as in  the proof of Proposition \ref{energ:est:V1:lem}, we arrive for all $t\in[0,T]$ at 
    \begin{multline}
               \label{cri:eq_V1_1}
          2^{j\frac d2}\Vert V^1_j\Vert_{L^\infty_t(L^2)}  \le  2^{j\frac d2}\normede{V^1_j(0)}+C  2^{j\frac d2}\suma\int^t_0\normeinf{\nabla A^\alpha_{11}(U^2)} \normede{V_j^1}\\
        + C  2^{j\frac d2}\suma\int^t_0\Bigl(\normede{ \DDj(A^\alpha_{12}(U^1) \pal V^2)} + \normede{ R_j^{11}}\Bigr)\cdotp     \end{multline}
         At this point,  two comments are in order.  First, to bound the right-hand side of \eqref{cri:eq_V1_1}, we need   $ \n A^\alpha_{11}(U)$ to be in  $L^1_T(L^\infty(\Rd)),$  which is not ensured by our critical functional framework, 
      unless  the matrices 
     $ A^\alpha_{11}$ are independent of $U^1$.  Moreover, as $ U^2$  is not necessarily  bounded in $[0,T]\times\Rd$, 
          $  A^\alpha_{11} $ has to be  affine  with respect to $U^2$.
          Second,  we need  $A^\alpha_{12}(U) $  to be bounded.  
         Again, since we do not have a control on $\|U^2\|_{L^\infty([0,T]\times\Rd)},$ we can only handle the case  where the matrices
          $A^\alpha_{12}(U)$ are independent of $U^2$.
          
          This being said, the terms	$     \Vert \DDj(A^\alpha_{12}(U^1) \pal V^2)\Vert_{L^2}$  may be bounded according to  the stability of the space $ \BH{\frac{d}{2}}{2}{1} $ by  product and to  Proposition  \ref{propo_compo_BH} as follows:
 \begin{align}	     \label{cri:eq_V1_2}
	   \Vert \DDj(A^\alpha_{12}(U^1) \pal V^2)\Vert_{L^2}\le Cc_j2^{-j\frac d2}
	   (1+\NBH{\cd}{2}{1}{V^1})  \NBH{\cd+1}{2}{1}{ V^2}.
    	    \end{align}
Next, thanks to  Proposition  \ref{propo_commutator-BH}, we have the following bound for $R_{j}^{11}$: 
    \begin{align*}
    \Vert  R_{j}^{11} \Vert_{L^2} \le C c_j2^{-j\frac{d}{2}} \Vert \nabla A^\alpha_{11}(U^2) \Vert_{\BH{\cd}{2}{1}} 
    \Vert V^1\Vert_{\BH{\cd}{2}{1}}.\end{align*}
Using the fact that $  A^\alpha_{11}(U^2)$ is at most linear, we finally get
\begin{align}	     \label{cri:est:Rj}
\Vert R_{j}^{11} \Vert_{L^2} \le C c_j2^{-j\frac{d}{2}}  \NBH{\cd+1}{2}{1}{V^2} \NBH{\cd}{2}{1}{V^1}.  \end{align}
Plugging \eqref{cri:est:Rj} and \eqref{cri:eq_V1_2} into \eqref{cri:eq_V1_1}, using $ \BH{\frac{d}{2}}{2}{1} \hookrightarrow  L^\infty$ and summing on $j\ge m$ yields \eqref{Esti_V_1}.
	\end{proof}

\paragraph{\bf Step $2$: Estimates for $V^2$ and $\pt V$.} For any integer $m,$ 
  the second equation of \eqref{Eq_b} (under conditions \textbf{C}) may be rewritten
       \begin{multline}       \label{equ_V_2_modif}
         S^0_{22}(U_m^1)\pt V^2 -\sumab \pal(Z^{\alpha \beta}(U^1_m)\pbe V^2)=-\suma\left(S^\alpha_{21}(U) \pal V^1 + S^\alpha_{22}(U) \pal V^2\right)\\
         +(S^0_{22}(U_m^1)-S^0_{22}(U^1))\pt V^2+ \sumab \pal((Z^{\alpha \beta}(U^1)-Z^{\alpha \beta}(U^1_m))\pbe V^2),
     \end{multline}
         where we denoted 
     \begin{equation}
	 \label{def_U_m}
	    U_m^1\defn\overline{U}^1+V_m^1\defn\overline{U}^1+\Dot{S}_{m+1}V^1=\overline{U}^1+\sum_{j\le m}\DDj V^1.
	 \end{equation}
	 Thanks to  the spectral localization of $U^1,$  
	 the left-hand side of  \eqref{equ_V_2_modif} may be seen as a parabolic system  with \emph{smooth} coefficients.
	 We expect the  error induced by  these localizations to tend to $0$ when $m$ goes to $\infty.$  
As  $V^1\in \cC([0,T]; \dot B^{\frac d2}_{2,1})$ and $B^{\frac d2}_{2,1}\hookrightarrow L^\infty,$
	we have  \eqref{def_U_O_1} (for, possibly, a slightly larger set $\cO^1$ compactly embedded in $\cU$)	
	for $m $ large enough. Note also that
Bernstein's inequality ensures that  there exists a constant $C>0$ independent of  $m$ so that for all  $\gamma\geq d/2,$ we have
     \begin{equation}
	 \label{injection_S_m}
	    \NBH{\gamma}{2}{1}{V_m^1}\le  C 2^{m(\gamma-\cd)} \NBH{\cd}{2}{1}{V^1}.
	 \end{equation}
  We aim at getting uniform estimates on $ V^2$ in suitable Besov spaces. For that, 
  as in the previous section, we set   $V_S \defn V^2-V_L^2$
where $V^2_L$ stands for the solution of \eqref{eq_lin-dif} with initial data $V_0^2.$
This function satisfies the following parabolic system:
	  \begin{multline}    \label{eq_V_S}
   S^0_{22}(U_m^1) \pt V_S- \sumab\pal(Z^{\alpha\beta}(U^1_m) \pbe V_S)\\
       = \sumab\pal\left((Z^{\alpha\beta}(U^1)-Z^{\alpha\beta}(U^1_m)) \pbe V_S\right)
   + R^t +R^{21}+R^{22}+R^L,\end{multline}
	where 
    \begin{align*}
                   R^t\defn  (S^0_{22}(U_m^1)-S^0_{22}(U^1))\pt V^2,\quad  R^{21} \defn -\suma S^\alpha_{21}(U) \pal V^1,
                  \quad R^{22}\defn -\suma   S^\alpha_{22}(U) \pal V^2\\
          \andf R^L \defn (\overline{S}^0_{22}-S^0_{22}(U^1))\pt V_L^2   +\sumab \pal( (Z^{\alpha \beta}(U^1)-\overline{Z^{\alpha \beta}}) \pbe V_L^2).           \hspace{2cm}                \end{align*}
   \begin{proposition}
     \label{energy_V^S}
  Under the hypotheses of Proposition \ref{energy_V^1} there exists a constant $C$ depending on $\cO^1$ and 
  on the matrices of the system such that, setting
 \begin{align*}
         &\mathfrak{A}^S(T) \defn \TLpNBH{\cd-1}{2}{1}{V_S}{\infty}+ \LpNBH{\cd+1}{2}{1}{V_S}{1};\;\;  \mathfrak{A}^L(T) \defn  \LpNBH{\cd-1}{2}{1}{\pt V^2_L}{1}+ \LpNBH{\cd+1}{2}{1}{V^2_L}{1},
      \end{align*}
    we have 
    \begin{multline}\label{est:vs:1:c}
      \bigl( 1-C(T+2^m   \LpNBH{\frac{d}{2}-1}{2}{1}{\pt V^1}{1}) \bigr)\mathfrak{A}^S(T) 
         \le  CM_1^3\Bigl(\bigl(1+\|V_0^2\|_{\dot B^{\frac d2-1}_{2,1}}\bigr)\mathfrak{A}^L(T)\\
          +\Bigl(\LpNBH{\frac{d}{2}}{2}{1}{V^1-V^1_m}{\infty}+2^m \sqrt T+\mathfrak{A}^S(T)\Bigr)\mathfrak{A}^S(T)
        +  \LpNBH{\frac{d}{2}}{2}{1}{V^1- V^1_m}{\infty}M_3     +T\Bigr)\cdotp 
\end{multline}
Moreover, 
\begin{align}
\label{esti:pt:v1}
   \LpNBH{\cd-1}{2}{1}{\pt V^1}{1} &\le CM_1\sqrt T \Bigl(\sqrt T + \mathfrak{A}^S(T) +\sqrt{ \|V_0^2\|_{\dot B^{\frac d2-1}_{2,1}} \mathfrak{A}^L(T)}\Bigr)
\andf\\\label{esti:pt:v2}
     \LpNBH{\cd-1}{2}{1}{\pt V^2}{1} &\le CM_1^3\Bigl(T+   \|V_0^2\|_{\dot B^{\frac d2-1}_{2,1}} \mathfrak{A}^L(T) + M_2+     
      (\mathfrak{A}^S(T))^2\Bigr)\cdotp\end{align}
  \end{proposition}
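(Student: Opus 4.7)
The plan is to mimic the approach of Proposition \ref{energ:est:V2:lem} applied to the system \eqref{eq_V_S} satisfied by $V_S$, but working now in the homogeneous critical Besov space $\dot B^{\frac d2-1}_{2,1}$. Since $V^2_0$ is not in $L^\infty$ at this regularity, both the spectral cutoff of $V^1$ at level $m$ (making the principal coefficients $S^0_{22}(U^1_m), Z^{\alpha\beta}(U^1_m)$ smooth) and the decomposition $V^2=V^2_L+V_S$ (isolating the low-regularity free evolution) are used crucially.

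Step 1 (localized parabolic energy estimate). I would apply $\ddj$ to \eqref{eq_V_S}, take the $L^2$ inner product with $\ddj V_S$, and invoke Gårding's inequality (Lemma \ref{gard_lem}) on the principal part---the ellipticity constant being uniform in $m$ thanks to \eqref{def_U_O_1} and \eqref{equi_S02}. The time-differentiated coefficient $\pt S^0_{22}(U^1_m) = DS^0_{22}(U^1_m)\cdot\dot S_{m+1}\pt V^1$ is controlled in $L^\infty$ by $C 2^m \|\pt V^1\|_{\dot B^{\frac d2-1}_{2,1}}$ via Bernstein's inequality. This gives for each $j\in\Z$ an inequality of the form
\begin{equation*}
\|\ddj V_S(t)\|_{L^2}+c2^{2j}\int_0^t\|\ddj V_S\|_{L^2} \leq C\int_0^t\Bigl(2^m\|\pt V^1\|_{\dot B^{\frac d2-1}_{2,1}}\|\ddj V_S\|_{L^2}+\|\ddj\mathrm{RHS}\|_{L^2}+\|\mathcal C_j\|_{L^2}\Bigr),
\end{equation*}
where $\mathcal C_j$ gathers the commutators of $\ddj$ with the principal coefficients, to be handled by Proposition \ref{propo_commutator-BH}. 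Multiplying by $2^{j(\frac d2-1)}$ and summing over $j\in\Z$ produces $\mathfrak{A}^S(T)$ on the left, and the $2^m$ factor appearing in the prefactor $1-C(T+2^m\|\pt V^1\|_{L^1_T(\dot B^{\frac d2-1}_{2,1})})$ of \eqref{est:vs:1:c} originates precisely from this Bernstein step.

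Step 2 (controlling the source terms). Each of $R^t, R^{21}, R^{22}, R^L$ and the extra term $\sumab\pal((Z^{\alpha\beta}(U^1)-Z^{\alpha\beta}(U^1_m))\pbe V_S)$ is bounded in $L^1_T(\dot B^{\frac d2-1}_{2,1})$ by means of the product and composition laws (Propositions \ref{propo_produc_BH} and \ref{propo_compo_BH}). The extra commutator term contributes $\|V^1-V^1_m\|_{L^\infty_T(\dot B^{\frac d2}_{2,1})}\mathfrak{A}^S(T)$; the term $R^t$ contributes $\|V^1-V^1_m\|_{L^\infty_T(\dot B^{\frac d2}_{2,1})}M_3$. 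For $R^{21}$ and $R^{22}$, the affinity of $S^\alpha_{21}, S^\alpha_{22}$ in $U^2$ reduces them to terms linear in $V$ (giving $T$-type contributions by Hölder in time) and bilinear in $V^2\otimes\nabla V$, which the splitting $V^2=V^2_L+V_S$ combined with the interpolation
\begin{equation*}
\|w\|_{L^2_T(\dot B^{\frac d2}_{2,1})}\leq \sqrt{\|w\|_{L^\infty_T(\dot B^{\frac d2-1}_{2,1})}\|w\|_{L^1_T(\dot B^{\frac d2+1}_{2,1})}}
\end{equation*}
resolves into $\sqrt{\|V^2_0\|\mathfrak{A}^L}$ and $(\mathfrak{A}^S)^2$ contributions. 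Finally $R^L$ yields the $(1+\|V^2_0\|)\mathfrak{A}^L$ term.

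Step 3 (time-derivative estimates) and main obstacle. For \eqref{esti:pt:v1}, I would read $\pt V^1$ off the first equation of \eqref{Eq_b} as $-\sum_\alpha \wt S^\alpha_{11}(U^2)\pal V^1-\sum_\alpha \wt S^\alpha_{12}(U^1)\pal V^2$; the affinity of $\wt S^\alpha_{11}$ in $U^2$ and Cauchy--Schwarz in time on $\|V^2\|_{L^2_T(\dot B^{\frac d2}_{2,1})}\leq C(\sqrt{\|V^2_0\|\mathfrak{A}^L}+\mathfrak{A}^S)$ (via the interpolation above applied to $V^2_L$ and $V_S$) produce the $\sqrt T$ prefactor. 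For \eqref{esti:pt:v2}, I would divide by $S^0_{22}(U^1)$ in the parabolic equation and use the same splitting; the $M_2$ contribution arises directly from $\|V^2\|_{L^1_T(\dot B^{\frac d2+1}_{2,1})}\leq M_2$ in the principal parabolic term $\sumab\pal(Z^{\alpha\beta}(U^1)\pbe V^2)$. The main obstacle is the circularity between \eqref{est:vs:1:c}, \eqref{esti:pt:v1} and \eqref{esti:pt:v2}: $\mathfrak{A}^S(T)$ appears both as a prefactor on the left-hand side of \eqref{est:vs:1:c} (through $\|\pt V^1\|_{L^1_T(\dot B^{\frac d2-1}_{2,1})}$) and quadratically on the right-hand side, and will have to be closed in a later subsection by first choosing $T$ small, then $m$ large so that $\|V^1-V^1_m\|_{L^\infty_T(\dot B^{\frac d2}_{2,1})}$ and $\mathfrak{A}^L(T)$ are both small.
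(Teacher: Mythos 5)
Your proposal is essentially the paper's proof: same spectral cutoff $U^1_m$ to make the principal coefficients smooth, same splitting $V^2=V^2_L+V_S$, the Gårding/Bernstein route for the $2^m$ factor, the same identification of where each term of \eqref{est:vs:1:c} originates, and the same interpolation $\|V^2\|_{L^2_T(\dot B^{d/2}_{2,1})}\le\mathfrak{A}^S+C\sqrt{\|V_0^2\|_{\dot B^{d/2-1}_{2,1}}\mathfrak{A}^L}$ to close things. One small but consequential detail you glossed over: the paper applies the \emph{conjugated} localization $S^0_{22}(U^1_m)\DDj(S^0_{22}(U^1_m))^{-1}$ (not $\DDj$ alone), precisely so that the time-derivative term stays in the clean form $S^0_{22}(U^1_m)\partial_t\DDj V_S$ and no commutator $[\DDj,S^0_{22}(U^1_m)]\partial_t V_S$ appears — that extra commutator would require $\partial_t V_S$ in $\dot B^{\frac d2-2}_{2,1}$, which is not available at the critical regularity and would break your Step~1 as written.
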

\begin{proof}
Applying $S^0_{22}(U_m^1)\DDj(S^0_{22}(U_m^1))^{-1}$ to \eqref{eq_V_S} gives
  \begin{align*}
     S^0_{22}(U_m^1) \pt \DDj V_{S}- Z^{\alpha \beta} (U^1_m)\pal\pbe \DDj V_{S}= R_j^{L}+R_j^{21}+R_j^{22}+R_j^{S}+R^t_j+E^S_j,
     \end{align*}
where we denote:  
\begin{align*}
    R_j^{L} &\defn   S^0_{22}(U_m^1)\DDj \biggl( ( S^0_{22}(U_m^1))^{-1} R^{L}\biggr);\;  R_j^{t} \defn  S^0_{22}(U_m^1)\DDj \biggl( ( S^0_{22}(U_m^1))^{-1}  R^{t}\biggr),\\
    R_j^{21}& \defn  S^0_{22}(U_m^1)\DDj \biggl( ( S^0_{22}(U_m^1))^{-1}  R^{21}\biggr);\;  R_j^{22} \defn  S^0_{22}(U_m^1)\DDj \biggl( ( S^0_{22}(U_m^1))^{-1}  R^{22}\biggr);\;\\
    E_j^S &\defn \sumab  S^0_{22}(U_m^1)\DDj \biggl( ( S^0_{22}(U_m^1))^{-1}\pal\left(Z^{\alpha\beta}(U^1)-Z^{\alpha\beta}(U^1_m) \pbe V_S\right)\biggr),\\
      R_j^{S} &\defn  S^0_{22}(U_m^1) \sumab\left[\DDj, \wt Z^{\alpha\beta}(U^1_m) \right]\pal\pbe V_S\\
      &\hspace{3cm}+\sumab  S^0_{22}(U_m^1)\DDj \biggl( ( S^0_{22}(U_m^1))^{-1} \left(\pal(Z^{\alpha\beta}(U^1_m))\pbe V_S\right)\biggr)\cdotp
     \end{align*}

Perform the same energy method  as in the proof of Proposition \ref{energ:est:V2:lem}. Remembering that \eqref{def_U_O_1} holds, we get
 \begin{multline}
\label{esti_v_2-2'''}
     	       \Vert \DDj V_{S} \Vert_{L^\infty_T(L^2)} +c2^{2j}\int^t_0\normede{\DDj V_S}\\ \le  C\biggl(\int^T_0\normede{\DDj V_{S}} (1+\normeinf{\pt(S^0_{22}(U_m^1))} )
	        +\int^T_0\normede{(R_j^{L},R_j^{21}, R_j^{22},R_j^{S},E_j^S,R^t_j)}\biggr)\cdotp
	   	\end{multline}
Note that \eqref{def_U_O_1}, the embedding  $ \BH{\frac{d}{2}}{2}{1} \hookrightarrow  L^\infty$ and  \eqref{injection_S_m}  lead to :
\begin{align}
\label{esti:v1:m:inf}
    \normeinf{\pt(S^0_{22}(U_m^1))} &\le C \normeinf{\pt V_m^1} \le C \NBH{\frac{d}{2}}{2}{1}{\pt V_m^1}\le C2^m \NBH{\frac{d}{2}-1}{2}{1}{\pt V^1} .
\end{align}
Owing to Proposition \ref{propo_produc_BH} and \ref{propo_compo_BH}, we have for $\alpha,\beta=1\cdots, d,$
\begin{align*}
 &\Vert  (\overline{S}^0_{22}-S^0_{22}(U_m^1))\pt V^2_L \Vert_{L^1_T(\BH{\cd-1}{2}{1})}\le C \LpNBH{\frac{d}{2}}{2}{1}{V_m^1}{\infty}  \LpNBH{\frac{d}{2}-1}{2}{1}{\pt V^2_L}{1},\\
 &  \Vert \pal(Z^{\alpha \beta}(U^1)-\overline{Z^{\alpha\beta}})\pbe V^2_L  )\Vert_{L^1_T(\BH{\cd-1}{2}{1})}\le  C \LpNBH{\frac{d}{2}}{2}{1}{V^1}{\infty} \LpNBH{\frac{d}{2}+1}{2}{1}{V^2_L}{1},
\end{align*}
whence, using also  \eqref{injection_S_m} yields the following bound on $  R^{L}$,
\begin{align*}
    \LpNBH{\cd-1}{2}{1}{R^L}{1}\le  \LpNBH{\frac{d}{2}}{2}{1}{V^1}{\infty} \Bigl(\LpNBH{\frac{d}{2}-1}{2}{1}{\pt V^2_L}{1}+ \LpNBH{\frac{d}{2}+1}{2}{1}{V^2_L}{1} \Bigr)\cdotp\end{align*}
Remembering the product law  \eqref{product_propo2}, we conclude that
\begin{align}
  \label{esti_v:L}
  \sum_j 2^{j(\cd-1)} \Vert  R_j^{L}  \Vert_{L^1_T(L^2)}\le  C (1+ \LpNBH{\frac{d}{2}}{2}{1}{ V^1_m}{\infty}) \LpNBH{\frac{d}{2}}{2}{1}{V^1}{\infty} \cU^L(T).\end{align}
The next step is to bound $  R_j^{21} $ and $  R_j^{22} $ in $ L^1_T(L^2) $. The term $  R^{21} $ can be decomposed as
\begin{align*}
     R^{21}= -\suma \overline{S^\alpha_{21}}\pal V^1+ \suma  \Bigl(\overline{S^\alpha_{21}}- S^\alpha_{21}(U)\Bigr)\pal V^1.
\end{align*}
Bearing in mind  that  $ S^\alpha_{21}$  is affine with respect to  $V^2$ and using   Propositions \ref{propo_Compo_u_v_BH} (especially \eqref{comp:u_v:BH:ine:prop:1} with $s=\cd$) and \ref{propo_produc_BH} to bound the second term in the previous identity yields  
\begin{multline}
\label{esti_R_21}
 \sum_j  2^{j(\cd-1)} \Vert  R_j^{21}  \Vert_{L^1_T(L^2)}\\ \le C \bigl(1+ \LpNBH{\frac{d}{2}}{2}{1}{ V^1_m}{\infty}\bigr)
 \bigl(1+\LpNBH{\frac{d}{2}}{2}{1}{V^1}{\infty}\bigr)\Bigl(T
    +\sqrt{T}\LpNBH{\frac{d}{2}}{2}{1}{V^2}{2}\Bigr) \LpNBH{\cd}{2}{1}{V^1}{\infty}.
    \end{multline}
Since the matrices  $S^\alpha_{22}$ have the same structure as $S^\alpha_{21} $, the term $  R_j^{22} $ may be bounded  as $R_j^{21},$ 
which gives
\begin{multline}\label{esti_R_22}
 \sum_j 2^{j(\cd-1)}\Vert  R_j^{22}  \Vert_{L^1_T(L^2)} 
   \\ \le C (1+ \LpNBH{\frac{d}{2}}{2}{1}{ V^1_m}{\infty})
     \bigl(1+\LpNBH{\frac{d}{2}}{2}{1}{V^1}{\infty}\bigr)\Bigl(\sqrt T
    +\LpNBH{\frac{d}{2}}{2}{1}{V^2}{2}\Bigr) \LpNBH{\cd}{2}{1}{V^2}{2}.\end{multline}
To bound the term $ R^t_j ,$ we take advantage of Propositions \ref{propo_produc_BH} and \ref{propo_compo_BH} and get
\begin{align}\label{esti_R_2}
    \sum_j  2^{j(\cd-1)}\Vert R^t_j   \Vert_{L^1_T(L^2)} &\le C(1+ \LpNBH{\frac{d}{2}}{2}{1}{ V^1_m}{\infty}) \LpNBH{\frac{d}{2}}{2}{1}{V^1-V^1_m}{\infty} \LpNBH{\frac{d}{2}-1}{2}{1}{\pt V^2}{1}.
\end{align}
Bounding $ R^S_j$ and $ E^S_j$ involves Propositions \ref{propo_commutator-BH} (with $\sigma=\cd-1$), \ref{propo_compo_BH} (with $ s=\cd $) and \ref{propo_produc_BH}  combined with \eqref{def_U_O_1}: we get
\begin{align*}
\begin{split}
    & \sum_j 2^{j(\cd-1)} \left\Vert   \left[\DDj, \wt Z^{\alpha\beta}(U^1_m) \right]\pal\pbe V_S \right\Vert_{L^1_T(L^2)}  \le  C\sqrt{T}  \LpNBH{\frac{d}{2}}{2}{1}{\n V^1_m}{\infty} \LpNBH{\frac{d}{2}-1}{2}{1}{\n  V_S}{2} ,\\
     &  \sum_j 2^{j(\cd-1)} \left\Vert \DDj\Bigl( (S^0_{22}(U^1_m))^{-1}\pal(Z^{\alpha\beta}(U^1_m))\pbe V_S   \Bigr)\right\Vert_{L^1_T(L^2)} \\
    &\hspace{4cm}\le  C\sqrt{T} (1+ \LpNBH{\frac{d}{2}}{2}{1}{ V^1_m}{\infty})  \LpNBH{\frac{d}{2}}{2}{1}{\n V^1_m}{\infty} \LpNBH{\frac{d}{2}-1}{2}{1}{\n V_S}{2},\\ 
    &   \sum_j 2^{j(\cd-1)} \left\Vert  \DDj\biggl((S^0_{22}(U^1_m))^{-1}\pal\Bigl((Z^{\alpha\beta}(U^1)\!-\!Z^{\alpha\beta}(U^1_m)) \pbe V_S\Bigr) \biggr)  \right\Vert_{L^1_T(L^2)} \\
    &\hspace{4cm}\le  C (1+ \LpNBH{\frac{d}{2}}{2}{1}{ V^1_m}{\infty}) \LpNBH{\frac{d}{2}}{2}{1}{V^1- V^1_m}{\infty} \LpNBH{\frac{d}{2}}{2}{1}{\n  V_S}{1}.
    \end{split}
\end{align*}
Hence, owing to \eqref{injection_S_m} and to the fact that
$$
 \LpNBH{\frac{d}{2}}{2}{1}{\n  V_S}{1}\leq C \LpNBH{\frac{d}{2}-1}{2}{1}{V_S}{\infty}^{1/2} \LpNBH{\frac{d}{2}+1}{2}{1}{V_S}{1}^{1/2}
 \leq C \mathfrak{A}^S(T),
$$ we deduce that 
\begin{align}    \label{est:E_m}
   \sum_j 2^{j(\cd-1)}\left\Vert E^S_j   \right\Vert_{L^1_T(L^2)}     &\le  C (1+ \LpNBH{\frac{d}{2}}{2}{1}{ V^1_m}{\infty})   \LpNBH{\frac{d}{2}}{2}{1}{V^1- V^1_m}{\infty} \mathfrak{A}^S(T),\\  \label{est:RS}
   \sum_j 2^{j(\cd-1)} \left\Vert R^S_j  \right\Vert_{L^1_T(L^2)} &\le  C 2^m\sqrt{T} (1+ \TLpNBH{\frac{d}{2}}{2}{1}{ V^1_m}{\infty})  \LpNBH{\frac{d}{2}}{2}{1}{ V^1}{\infty}  \mathfrak{A}^S(T).
\end{align}
Inserting  Inequalities \eqref{esti:v1:m:inf} to \eqref{est:RS}  into \eqref{esti_v_2-2'''}, then   summing  over $ j\in\Z$, using the definition of $M_1, M_3$ (note that $M_1\geq1$)  and remarking that $ \LpNBH{\frac{d}{2}}{2}{1}{ V^1_m}{\infty}\le M_1 $ yields
$$\displaylines{
	         \bigl( 1-C(T+2^m   \LpNBH{\frac{d}{2}-1}{2}{1}{\pt V^1}{1}) \bigr)\mathfrak{A}^S(T) 
         \le  CM_1^3\Bigl(\mathfrak{A}^L(T) +\bigl(\LpNBH{\frac{d}{2}}{2}{1}{V^1-V^1_m}{\infty}+2^m \sqrt T\bigr)\mathfrak{A}^S(T)
     \hfill\cr\hfill    +  \LpNBH{\frac{d}{2}}{2}{1}{V- V^1_m}{\infty}M_3
         +T+\sqrt T \LpNBH{\cd}{2}{1}{V^2}{2}+\LpNBH{\cd}{2}{1}{V^2}{2}^2\Bigr)\cdotp}$$
Finally, since $V^2=V^2_L+V_S,$ using interpolation and the fact that 
$$\LpNBH{\cd-1}{2}{1}{V^2_L}{\infty}\leq C_0\NBH{\cd-1}{2}{1}{V_0^2}$$
gives
\begin{align}        \label{intp:to:V2:res}
         \LpNBH{\cd}{2}{1}{V^2}{2}\le \mathfrak{A}^S +C_0\sqrt{\NBH{\cd-1}{2}{1}{V_0^2}\mathfrak{A}^L},   \end{align}
from which we get \eqref{est:vs:1:c}. 
\medbreak
To prove   \eqref{esti:pt:v1} and   \eqref{esti:pt:v2},  we recall that
 \begin{align*}
      &\pt V^1 =-\suma \left( A^\alpha_{11}(U) \pal V^1 + A^\alpha_{12}(U) \pal V^2\right),\\
	     &\pt V^2 =(S^0_{22}(U^1))^{-1}\biggl(\sumab\pal(Z^{\alpha \beta}(U^1)\pbe V^2)-\suma\left(S^\alpha_{21}(U) \pal V^1 + S^\alpha_{22}(U) \pal V^2\right)\biggr)\cdotp
 \end{align*}
Then, thanks to  Propositions \ref{propo_produc_BH} and \ref{propo_Compo_u_v_BH} and remembering  Assumption \textbf{C},\begin{align*}
  &  \LpNBH{\cd-1}{2}{1}{ A^\alpha_{11}(U) \pal V^1}{1}\le C(T+ \sqrt{T}\LpNBH{\cd}{2}{1}{V^2}{2}  )\LpNBH{\cd}{2}{1}{V^1}{\infty},\\
&  \LpNBH{\cd-1}{2}{1}{ A^\alpha_{12}(U) \pal V^2}{1} \le C\sqrt{T} ( 1 +
     \LpNBH{\cd}{2}{1}{V^1}{\infty} )\LpNBH{\cd}{2}{1}{V^2}{2},\\
      &  \LpNBH{\cd-1}{2}{1}{ (S^0_{22}(U^1))^{-1} S^\alpha_{21}(U) \pal V^1 }{1} \le C\bigl(1+ \LpNBH{\cd}{2}{1}{V^1}{\infty}\bigr)\bigl( T(1+
     \TLpNBH{\cd}{2}{1}{V^1}{\infty})\\ &\hspace{7cm}+
     \sqrt{T} \LpNBH{\cd}{2}{1}{V^1}{\infty}\LpNBH{\cd}{2}{1}{V^2}{2} \bigr)\LpNBH{\cd}{2}{1}{V^1}{\infty},\\
      &  \LpNBH{\cd-1}{2}{1}{ (S^0_{22}(U^1))^{-1} S^\alpha_{22}(U) \pal V^2 }{1} \le C\bigl(1+ \LpNBH{\cd}{2}{1}{V^1}{\infty}\bigr)\bigl( \sqrt{T}(1+
     \LpNBH{\cd}{2}{1}{V^1}{\infty}) \\&\hspace{7cm}+
   \LpNBH{\cd}{2}{1}{V^1}{\infty}  \LpNBH{\cd}{2}{1}{V^2}{2} \bigr)\TLpNBH{\cd}{2}{1}{V^2}{2},\\
      &  \LpNBH{\cd-1}{2}{1}{ (S^0_{22}(U^1))^{-1} \pal(Z^{\alpha \beta}(U^1)\pbe V^2) }{1} \le C(1+ \LpNBH{\cd}{2}{1}{V^1}{\infty})^2 \LpNBH{\cd+1}{2}{1}{V^2}{1}.
    \end{align*}
    Taking advantage of  Inequality \eqref{intp:to:V2:res} completes the proof of  \eqref{esti:pt:v1} and  \eqref{esti:pt:v2}.
       \end{proof}


\paragraph{\bf Step $3$: Closing the estimates} \label{stability:crit}

Let us set $d_1\defn \frac12 d(\cO_0^1,\partial\cU)$ and define $\cO^1$ to be a $d_1$-neighborhood of $\cO_0^1.$
Let 
\begin{equation}\label{eq:M1}M_1\defn 1+ 2\|V_0^1\|_{\dot B^{\frac d2}_{2,1}}.\end{equation}
In this part, we are going to prove that if $m\in\N$ is chosen sufficiently large, then $\eta\in(0,1)$ sufficiently small, 
 one can find some   $T\in(0,1)$ depending only on $m,$ $\eta$ and on the initial data (and of the matrices of the system) so that the solution $V$ satisfies the following:
\begin{enumerate}
    \item [$(\mathbb{C}1)$] $\TLpNBH{\cd}{2}{1}{V^1}{\infty}
    \le M_1,$
     \item [$(\mathbb{C}2)$]     $\TLpNBH{\cd}{2}{1}{V^1-V^1_m}{\infty}\le \sqrt\eta$,
    \item [$(\mathbb{C}3)$]  $ |V^1(t,x)-V^1_0(x)|\le d_1 \esp{for any} (t,x)\in [0,T]\times\Rd $,\smallbreak
      \item [$(\mathbb{C}4)$]   $ \TLpNBH{\cd-1}{2}{1}{V_S}{\infty}   +\LpNBH{\cd+1}{2}{1}{V_S}1 \le \eta $ ,
      \item [$(\mathbb{C}5)$] $ \mathfrak A^L(T) \le \eta^2.$
\end{enumerate}
Note that Property $(\mathbb{C}3)$ readily ensures that   $U^1(t,x) \in \cO^1$ for any $(t,x)\in [0,T]\times\Rd,$ 
so that we will be able to use the estimates of the previous subsections.
\smallbreak
Before starting the proof, we fix $m\in\N$ large enough so that 
\begin{equation}\label{eq:fixm}
\sum_{j\geq m}2^{j\cd}\|V^1_{0,j}\|_{L^2}\leq\frac12\sqrt \eta,
\end{equation}
and $T$   small enough so that 
\begin{equation}\label{eq:fixT}
T\leq \sup\Bigl\{h>0,\: C_0\sum_{j\in\Z} \biggl(1-e^{-c2^{2j}h}\Bigr)2^{j(\cd-1)}\|\DDj V_0^2\|_{L^2}\leq\eta^2\biggr\},\end{equation}
where $C_0$ is the constant in Inequality \eqref{L-1_j} (adapted to the homogeneous setting).
Note that this readily ensures $(\mathbb{C}5).$
\medbreak
\subsubsection*{Substep 1. Proving $(\mathbb{C}1)$} Taking advantage of  Inequality \eqref{Esti_V_1} with $m=+\infty,$ then using 
Properties $(\mathbb{C}1)$ to $(\mathbb{C}5)$  and the definition of $M_1$ in \eqref{eq:M1} gives
\begin{align*}
   \TLpNBH{\cd}{2}{1}{V^1}{\infty}&\leq  \NBH{\cd}{2}{1}{V_0^1} + 
   C\bigl(  \LpNBH{\cd+1}{2}{1}{V_S}1 +  \LpNBH{\cd+1}{2}{1}{V_L^2}1\bigr)\bigl(1+  \TLpNBH{\cd}{2}{1}{V^1}{\infty}\bigr)\\
   &\leq C\eta -\frac12 + \biggl(\frac12+C\eta\biggr) M_1.
   \end{align*}
Hence  $(\mathbb{C}1)$ holds true with strict inequality provided $\eta$ has been chosen  so that $C\eta<1/2.$ 

\subsubsection*{Substep 2. Proving $(\mathbb{C}2)$} 
From  Inequality \eqref{Esti_V_1} with $m$ given by \eqref{eq:fixm}, we gather
\begin{align*} 
\TLpNBH{\cd}{2}{1}{V^1-V^1_m}{\infty}&\!\le \sum_{j\geq m}\! 2^{j\cd} \|V_j^1\|_{L_T^\infty(L^2)}\\
&\leq \sum_{j\geq m}\! 2^{j\cd} \|V_{0,j}^1\|_{L^2} + C\bigl(  \LpNBH{\cd\!+\!1}{2}{1}{V_S}1\! +\!  \LpNBH{\cd\!+\!1}{2}{1}{V_L^2}1\bigr)
 \bigl(1\!+\!  \TLpNBH{\cd}{2}{1}{V^1}{\infty}\bigr)\\
 &\leq \sqrt\eta/2 + C\eta(1+M_1).\end{align*}
 Hence we have $(\mathbb{C}2)$  with strict inequality if  $\eta$ has been chosen  so that $C\sqrt\eta(1+M_1)<1/2.$

\subsubsection*{Susbstep 3. Proving $(\mathbb{C}3)$} 

We use the fact that:
\[ V^1-V^1_0= \Dot{S}_{m+1}(V^1-V^1_0)+({\rm Id}-\Dot{S}_{m+1})(V^1-V^1_0). \]
Then, the embedding  $ \BH{\frac{d}{2}}{2}{1} \hookrightarrow L^\infty ,$ the fact that $ \DDj({\rm Id}- \Dot{S}_{m+1})=0$ if $j< m$ 
and Inequality \eqref{injection_S_m} ensure that
for all $t\in[0,T],$
$$\begin{aligned} \| V^1(t)-V^1_0\|_{L^\infty} 
&\le C\biggl(\int^T_0 \NBH{\cd}{2}{1}{\Dot{S}_{m+1}\pt V^1}+\sum_{j\ge m}2^{j\cd}\normede{\DDj (V^1(t)-V^1_0)}\biggr)\\
&\le C\biggl(2^m \LpNBH{\cd-1}{2}{1}{\pt V^1}{1}+\TLpNBH{\cd}{2}{1}{V^1-V^1_m}{\infty}+
\sum_{j\geq m} 2^{j\cd} \|V_{0,j}\|_{L^2}\biggr)\cdotp\end{aligned}$$
The function $\partial_t V^1$ may be bounded from \eqref{esti:pt:v1} and  $(\mathbb{C}3)-(\mathbb{C}4)$ as follows:
\begin{equation}\label{eq:boundptv1}
 \LpNBH{\cd-1}{2}{1}{\pt V^1}{1}\leq CM_1\sqrt T\Bigl(\sqrt T + \eta\bigl(1+\|V_0^2\|_{\dot B^{\cd-1}_{2,1}}\bigr)\Bigr)\cdotp
 \end{equation}
Remembering \eqref{eq:fixm} and  $(\mathbb{C}2),$ we thus get 
$$ \| V^1(t)-V^1_0\|_{L^\infty} \le CM_12^m \sqrt T\Bigl(\sqrt T + \eta\bigl(1+\|V_0^2\|_{\dot B^{\cd-1}_{2,1}}\bigr)\Bigr)
+C\sqrt \eta.$$
Hence $(\mathbb{C}3)$ is satisfied for sufficiently small $T$ if $\eta$ has been chosen so that $C\sqrt\eta\leq d_1/2.$

\subsubsection*{Susbstep 4. Proving $(\mathbb{C}4)$} 

Owing to \eqref{eq:boundptv1},   if $T$ has been chosen small enough then for any $\eta\in(0,1),$
the negative part of the prefactor of $\partial_tV^1$ in \eqref{est:vs:1:c} may be omitted, and we get
(up to a change of $C$): 
$$\mathfrak{A}^S(T) 
         \le  CM_1^3\Bigl(\bigl(1+\|V_0^2\|_{\dot B^{\frac d2-1}_{2,1}}\bigr)\eta^2          +\bigl(2^m \sqrt T+\sqrt\eta\bigr)\mathfrak{A}^S(T)
        +  \eta^{3/2}    +T\Bigr)\cdotp$$
If $\eta$ and $T$ are such that $C(2^m \sqrt T+\sqrt\eta)\leq1/2,$ this gives
$$\mathfrak{A}^S(T) 
         \le  2CM_1^3\Bigl(\bigl(1+\|V_0^2\|_{\dot B^{\frac d2-1}_{2,1}}\bigr)\eta^2              +  \eta^{3/2}    +T\Bigr)\cdotp$$
From it, we get  $(\mathbb{C}4)$ with a strict inequality if, say, 
$$
2CM_1^3\Bigl(\bigl(1+\|V_0^2\|_{\dot B^{\frac d2-1}_{2,1}}\bigr)\eta +  \sqrt\eta\Bigr)<1/2
\andf 2CM_1^3T<1/2.$$

\subsubsection*{Susbstep 5. Bootstrap}  
Since all the quantities coming into play in  $(\mathbb{C}1)- (\mathbb{C}5)$ are continuous in time
and since the desired properties are true for $T=0,$ we are guaranteed 
that they are also true on a small enough time interval $[0,T_0].$  In the previous
computations, we pointed out some $T>0$ \emph{depending only on the initial data} 
such that if  $(\mathbb{C}1)- (\mathbb{C}5)$ are satisfied, then they actually hold \emph{with strict inequality}. 
The usual connectivity argument thus ensures that we do have  $(\mathbb{C}1)- (\mathbb{C}5)$ on $[0,T],$
which completes the proof.



\subsection{The proof of existence} 

Let us smooth out the initial data $V_0$ as follows: 
$$
V_{0,p}:=\sum_{|j|\leq p} \DDj V_0,\qquad p\in\N.$$
Then, we see that \eqref{eq:fixm} can be ensured independently of $p,$ and we have 
\begin{equation}\label{eq:datasmooth}
V_{0,p}\to V_0 \hbox{ in } \dot B^{\frac d2}_{2,1}\times \dot B^{\frac d2-1}_{2,1}\andf
\underset{p\in\N}\sup \|V_{0,p}\|_{\dot B^{\frac d2}_{2,1}\times \dot B^{\frac d2-1}_{2,1}}
\leq \|V_0\|_{\dot B^{\frac d2}_{2,1}\times \dot B^{\frac d2-1}_{2,1}}.\end{equation}
The fact that $\dot B^{\frac d2}_{2,1}\hookrightarrow L^\infty$ guarantees that \eqref{def_U_O_1}
is satisfied for large enough $p.$ 
Now, since $V_{0,p}$ belongs to all spaces $B^s_{2,1},$ 
applying Theorem \ref{Thm:loc:scri} gives us a sequence of smooth local solutions $(V_p)_{p\in\N}$ 
on some maximal time interval $[0,T_p).$
Since the solutions are smooth,  the computations that have been performed in the previous section hold true;
 keeping in mind our definition of smoothed out data and \eqref{eq:datasmooth}, 
properties $({\mathbb C}_1)$ to $({\mathbb C}_5)$ are  satisfied on $[0,T_p^*[$ with $T^*_p:=\min(T_p,T)$ and $T$ given therein.
The important point is that these conditions  (and embedding) ensure  
that $\nabla V^1_p\in L^2(0,T_p^*;L^\infty)$ and $\nabla V^2_p\in L^1(0,T_p^*;L^\infty).$
Consequently, applying the continuation criterion pointed out in Remark \ref{rmq:improved}  ensures that $T_p>T.$
As a conclusion, we proved that  the lifespan of each term of the sequence is greater than $T,$ 
and that $(V_p)_{p\in\N}$ is bounded in the space $\cE_T.$ 

The rest of the proof is standard. The boundedness in $\cE_T$ guarantees that
$V_p$ converges weakly  to some limit $V$ that belongs to 
$L^\infty(0,T;B^{\frac d2}_{2,1}\times B^{\frac d2-1}_{2,1}).$ 
Then, the boundedness of the time derivatives combined with a Lions-Aubin type argument and, finally, interpolation
gives some strong convergence (locally in space and time) that is enough to pass to the limit 
in \eqref{Eq_b}. 
 To recover the time continuity with values in  $B^{\frac d2}_{2,1}\times B^{\frac d2-1}_{2,1}$ and the $L^1_T$
 properties coming into play in the definition of $\cE_T,$ one may argue as for the compressible 
 Navier-Stokes equations (see e.g. \cite[Chap.10]{HajDanChe11}).


\subsection{The proof of uniqueness}\label{sec:uni:crit}

To simplify the presentation, we here assume that $S^0={\rm Id}.$ 
Let $V_1$ and $V_2$ be two solutions of \eqref{Eq_b}  on $[0,T]\times\Rd$ given by  Theorem \ref{thm:loc:cri}
and corresponding to the same initial data. Let $\dV\defn V_2-V_1.$ 
The proof of uniqueness  consists in obtaining suitable a priori estimates for the following system satisfied by $\dV$:
 \begin{align}
 \label{eq:tv}
     \begin{cases}
    \pt \dV^1+\suma A^\alpha_{11}(U^2_2) \dV^1=h, \\[1ex]
  \pt\dV^2 -\sumab  Z^{\alpha\beta}(U^1_{2,m})\pal \pbe \dV^2=g\defn \sum_{k=1}^7 g_k,
\end{cases}
 \end{align} 
with
\begin{align*}
    &  h= \suma \left( A^\alpha_{11}(U^2_2)-A^\alpha_{11}(U^2_1)\right)\pal V^1_1-\suma A^\alpha_{12}(U_2^1)\pal \dV^2 -\suma \left( A^\alpha_{12}(U^1_2)-A^\alpha_{12}(U^1_1)\right)\pal V^2_1,\\ 
      &g_1=- \suma\left(  S^\alpha_{22}(U_2)-S^\alpha_{22}(U_1)\right)\pal V^2_1,\quad
      g_2= -\suma S^\alpha_{22}(U_2)\pal \dV^2 \\
    & g_3 = - \suma S^{\alpha}_{21}(U_2)\pal \dV^1,\quad g_4=
    -  \suma\left(  S^\alpha_{21}(U_2)-S^\alpha_{21}(U_1)\right)\pal V^1_1,\\
    & g_5 = \sumab \pal \left(Z^{\alpha\beta}(U^1_{2,m})\right)\pbe \dV^2, \quad
     g_6 = \sumab \pal\left((  Z^{\alpha\beta}(U^1_{2}) -  Z^{\alpha\beta}(U^1_{2,m})) \pbe \dV^2\right), \\& g_7= \sumab \pal\left((  Z^{\alpha\beta}(U^1_{2}) - Z^{\alpha\beta}(U^1_{1})) \pbe V^2_1 \right).
\end{align*}
Like in Section \ref{sec:proof:loc:sur}, uniqueness has to be proved in a space with one less derivative, namely
\begin{align*}
   \widetilde{L}^\infty_T(\BH{\cd-1}{2}{1})\times \biggl( \widetilde{L}^\infty_T(\BH{\cd-2}{2}{1})\cap L^1_T(\BH{\cd+1}{2}{1})\biggr)\cdotp
\end{align*}
This would indeed work in dimension $d\geq3.$ In dimension $d=2$ however,  
this would lead us to estimating  the right-hand side of  \eqref{eq:tv}$_2$  in $L^1_T(\dot B^{-1}_{2,1}).$
Terms like $g_1$ or $g_4$ are not tractable in this low regularity framework since, typically, 
  the product of functions  maps $ \BH{0}{2}{1}(\R^2)\times\BH{0}{2}{1}(\R^2) $ in 
the larger space $ \BH{-1}{2}{\infty}(\R^2),$ rather than in $ \BH{-1}{2}{1}(\R^2).$ 
Bounding  $\dV^2$ in  ${L}^\infty_T(\BH{-1}{2}{\infty})\cap L^1_T(\BH{1}{2}{\infty})$ is not good either since the fact that  $\BH{1}{2}{\infty} \centernot\hookrightarrow  L^\infty$ 
causes 
some problem when estimating $h.$ 
Following \cite{Danchin05}, we shall  bypass this difficulty  leveraging the following logarithmic interpolation inequality:
 \begin{equation}
        \label{ine:log:inte:NHB}
        \LpNBH{\cd}{2}{1}{a}{1}\le C \TLpNBH{\cd}{2}{\infty}{a}{1} 
        \log\left( e+ 
        \frac{\TLpNBH{\cd-1}{2}{\infty}{a}{1}+\TLpNBH{\cd+1}{2}{\infty}{a}{1}}{ \TLpNBH{\cd}{2}{\infty}{a}{1}}
        \right)\cdotp
    \end{equation}
Consequently, in what follows, we shall  estimate $(\dV^1,\dV^2)$  in
 \begin{align*}
{\mathcal F}_T\defn    {L}^\infty_T(\BH{\cd-1}{2}{\infty})\times {L}^\infty_T(\BH{\cd-2}{2}{\infty})\cap \widetilde{L}^1_T(\BH{\cd}{2}{\infty}).
\end{align*}
Although $V_1$ and $V_2$ need not be in ${\mathcal F}_T,$ their difference is, 
as a consequence of the following computations (see e.g. \cite{Danchin05} for more explanations).
Now, apply  operator $\DDj$ to \eqref{eq:tv}$_1$ to get 
\[    \pt \dV^1_j+\suma A^\alpha_{11}(U^2_2) \dV^1_j= \DDj h + \widetilde{R}_j \ \text{ with }  \ \widetilde{R}_j =-\suma[\DDj,A^\alpha_{11}(U^2_2)]\pal \dV^1.\]
As $\widetilde V^1(0)=0$ and $ A^\alpha_{11}(U^2_2) $ is affine, using the energy method gives
\begin{align*}
    \Vert \dV^1_j\Vert_{L^\infty_T(L^2)} \le  C\Vert \dV^1_j\Vert_{L^\infty_T(L^2)} \Vert \n V^2_2\Vert_{L^1_T(L^\infty)}+   \Vert(\widetilde R_j , \DDj h)\Vert_{L^1_T(L^2)}.
\end{align*}
Bounding $ \Vert( \wt R_j , \DDj h)\Vert_{L^\infty_T(L^2)}$ is achieved by combining Propositions \ref{propo_commutator-BH} and \ref{propo_produc_BH}. We have
\begin{align*}
  & 2^{j(\cd-1)} \Vert \widetilde R_j\Vert_{L^\infty_T(L^2)} \le C \LpNBH{\cd-1}{2}{\infty}{\dV^1}{\infty}\Vert \n V^2_2 \Vert_{ \widetilde{L}^1_T(\BH{\cd}{2}{\infty})\cap L^1_T(L^\infty)},\\
    &2^{j(\cd-1)} \Vert \DDj h  \Vert_{L^1_T(L^2)} \le C \int^T_0\biggl(\NBH{\cd}{2}{1}{\dV^2}\NBH{\cd-1}{2}{\infty}{\n V^1_1}\\
    &\hspace{5cm}+ (1+\NBH{\cd}{2}{1}{V^1_2})\NBH{\cd-1}{2}{\infty}{\n \dV^2}
    + \NBH{\cd-1}{2}{\infty}{\dV^1}\NBH{\cd}{2}{1}{\n V^2_1}\biggr)\cdotp
\end{align*}
Finally using the embedding ${L}_T^1(\BH{\cd}{2}{1}) \hookrightarrow \widetilde{L}^1_T(\BH{\cd}{2}{\infty})\cap L^1_T(L^\infty),$ we arrive at 
\begin{multline*}
    \LpNBH{\cd-1}{2}{\infty}{\dV^1}{\infty}\le C   \LpNBH{\cd+1}{2}{1}{(V^2_2,V^2_1)}{1}  \LpNBH{\cd-1}{2}{\infty}{\dV^1}{\infty}\\
    +C\int^T_0\biggl( 1+ \NBH{\cd}{2}{1}{V^1_1}+ \NBH{\cd}{2}{1}{V^1_2} \biggr) \NBH{\cd}{2}{1}{\dV^2}.
\end{multline*}
By virtue of the Lebesgue dominated convergence theorem, $   \LpNBH{\cd+1}{2}{1}{(V^2_2,V^2_1)}{1}  $
 tends to $0$ when $T$ goes to $0.$ Hence there exists a positive time (still denoted by $T$) 
such that the first term on the right-hand side may  be absorbed by the left-hand side. Then, making use of inequality \eqref{ine:log:inte:NHB} and setting 
\begin{align}
    \label{def:M1:M2:tV}
    \begin{split}
   & M_1(T)= \TLpNBH{\cd}{2}{1}{(V^1_1,V^1_2)}{\infty}+ \TLpNBH{\cd-1}{2}{1}{(V^2_1,V^2_2)}{\infty} \esp{and}\\
   &M_2(T)= \LpNBH{\cd-1}{2}{1}{(\pt V_1,\pt V_2)}{1}+ \LpNBH{\cd+1}{2}{1}{( V^2_1, V^2_2)}{1}
     \end{split}\end{align}
yields: 
\begin{equation}\label{est:tv1:uni}
      \LpNBH{\cd-1}{2}{\infty}{\dV^1}{\infty} \le C\left( 1\!+\! M_1(T) \right) \TLpNBH{\cd}{2}{\infty}{\dV^2}{1}\log\left( e + \frac{TM_1(T)+M_2(T)}{\TLpNBH{\cd}{2}{\infty}{\dV^2}{1}}\right)\cdotp
\end{equation}
To  bound $ \dV^2,$ apply the operator $\DDj$ to  \eqref{eq:tv}$_2$ to get 
$$\pt\dV^2_j -\sumab  Z^{\alpha\beta}(U^1_{2,m}) \pal\pbe \dV^2_j=\DDj g+ k_j
\with
   k_j\defn  \sumab \left[ \DDj, Z^{\alpha\beta}(U^1_{2,m})\right] \pal\pbe \dV^2_j.$$
Arguing as for proving \eqref{esti_v_2-2'} gives
\begin{align*}\Vert \dV^2_j\Vert_{L^\infty_T(L^2)} + c2^{2j} \Vert \dV^2_j\Vert_{L^1_T(L^2)} \le   \Vert(  \DDj g, k_j)\Vert_{L^\infty_T(L^2)}
+C\Vert \dV^2_j\Vert_{L^1_T(L^2)}.
\end{align*}
Since $d\ge 2$, taking advantage of the commutator estimates of Proposition \ref{propo_commutator-BH} gives  
\begin{align}    \label{est:frc:Gj}
    \Vert k_j \Vert_{L^1_T(L^2)}\le C2^{-j(\cd-2)}\sqrt{T}\TLpNBH{\cd}{2}{1}{\n U_{2,m}^1}{\infty}\TLpNBH{\cd-2}{2}{\infty}{\n\dV^2}{2} ,
     \end{align}
     whence, if $T$ is small enough, using \eqref{injection_S_m},
     \begin{equation}\label{est;tv2:uniq}
     \Vert \dV^2_j\Vert_{L^\infty_T(L^2)} + 2^{2j} \Vert \dV^2_j\Vert_{L^1_T(L^2)} \le  C2^{-j(\cd-2)} 2^m\sqrt{T}M_1 (T)\delta{\mathfrak{U}}(T) +   \Vert  \DDj g\Vert_{L^\infty_T(L^2)},
\end{equation}
where hereafter we put 
\begin{align*}     {\delta\mathfrak{U}}(T)\defn \LpNBH{\cd-2}{2}{\infty}{\dV^2}{\infty}\!+\!\TLpNBH{\cd}{2}{\infty}{\dV^2}{1}.\end{align*}
To bound  the $g_j$'s, we use repeatedly Propositions \ref{propo_commutator-BH}, \ref{propo_produc_BH} and \ref{propo_compo_BH},
 Inequality \eqref{comp:u_v_1:2:BH:ine:prop:inf} (adapted to the spaces $ \widetilde{L}^\rho_T(\BH{s}{2}{r})$) and 
the following two product laws hold true:
\begin{equation}\label{eq:endpoint}
 \BH{\cd-1}{2}{\infty}\times\BH{\cd-1}{2}{1}\to  \BH{\cd-2}{2}{\infty}\andf \BH{\cd-2}{2}{\infty}\times\BH{\cd}{2}{1}\to  \BH{\cd-2}{2}{\infty},\qquad d\geq2.
\end{equation}
We find that
\begin{align*}
    \TLpNBH{\cd-2}{2}{\infty}{g_1}{1}&\le C \sqrt{T}\TLpNBH{\cd-1}{2}{\infty}{\dV^2}{2}(1+\TLpNBH{\cd}{2}{1}{V^1_2}{\infty})\TLpNBH{\cd-1}{2}{1}{ \n V^2_1}{2} \\&\hspace{3cm}+C\int^T_0\NBH{\cd-1}{2}{\infty}{\dV^1}(1+\NBH{\cd}{2}{1}{V^2_1})
     \NBH{\cd-1}{2}{1}{ \n V^2_1}, \\
        \TLpNBH{\cd-2}{2}{\infty}{g_2}{1} &\le  C (\sqrt{T}+\TLpNBH{\cd}{2}{1}{V^2_2}{2})\bigl(1+\TLpNBH{\cd}{2}{1}{V_2^1}{\infty}\bigr)   \TLpNBH{\cd-2}{2}{\infty}{\n \dV^2}{2}, \\
       \TLpNBH{\cd-2}{2}{\infty}{g_{3}}{1}&\le C \int^T_0(1+ \NBH{\cd}{2}{1}{V_2^2})(1+\NBH{\cd}{2}{1}{V_2^1}) \NBH{\cd-2}{2}{\infty}{\n \dV^1},\\      
     \TLpNBH{\cd-2}{2}{\infty}{g_{4}}{1} &\le C \sqrt{T} \TLpNBH{\cd-1}{2}{\infty}{\dV^2}{2}(1+\TLpNBH{\cd}{2}{1}{V^1_{2}}{\infty})\TLpNBH{\cd-1}{2}{1}{ \n V^1_1}{\infty}\\&\hspace{3cm}+C\int^T_0\bigl(1+\NBH{\cd}{2}{1}{V^2_1}\bigr)
     \NBH{\cd-1}{2}{1}{ \n V^1_1} \NBH{\cd-1}{2}{\infty}{\dV^1}.
 \end{align*}
Then,  thanks to \eqref{comp:uv:propo:1} and \eqref{eq:endpoint}, 
$$        \TLpNBH{\cd-2}{2}{\infty}{g_{5}}{1} \le C\sqrt{T}\TLpNBH{\cd}{2}{1}{\n V^1_{2,m}}{\infty}\TLpNBH{\cd-2}{2}{\infty}{\n   \dV^2}{2}.$$
 Finally, thanks to \eqref{eq:endpoint},   Proposition \ref{propo_compo_BH} (especially \eqref{comp:uv:propo:1}) and Bernstein inequality,
\begin{align*}
        \TLpNBH{\cd-2}{2}{\infty}{g_6}{1} &\le C\TLpNBH{\cd}{2}{1}{V^1_2-V^1_{2,m}}{\infty}(1+\TLpNBH{\cd}{2}{1}{V^1_{2}}{\infty})\TLpNBH{\cd-1}{2}{\infty}{\n  \dV^2}{1}\\
      \TLpNBH{\cd-2}{2}{\infty}{g_7}{1} &\le C(1+\TLpNBH{\cd}{2}{1}{(V^1_{1},V^1_{2})}{\infty})\int^T_0\NBH{\cd-1}{2}{\infty}{\dV^1}\NBH{\cd}{2}{1}{\n  V^2_1}.
 \end{align*}
Multiplying \eqref{est;tv2:uniq} by $2^{j(\cd-2)}$,
taking into account the above estimates, using  \eqref{injection_S_m} and interpolation inequalities,  one concludes, assuming with 
no loss of generality that    $M_1(T)\geq1,$
\begin{multline}    \label{est;tv2:uniq:1}
  \delta{\mathfrak{U}}(T) \le  C_{M_1}\biggl(\Bigl(2^{m}M_1(T)\sqrt{T} +\sqrt T\,M_1^2(T)+M_1^{3/2}(T)\sqrt{M_2(T)}\\+\TLpNBH{\cd}{2}{1}{V^1_2-V^1_{2,m}}{\infty} \Bigr) \delta{\mathfrak{U}}(T)
   + \int^T_0\Bigl(1+\NBH{\cd}{2}{1}{(V^1_1,V^2_1)}+\NBH{\cd+1}{2}{1}{V^2_1}\Bigr)\NBH{\cd-1}{2}{\infty}{\dV^1}\biggr)\cdotp
\end{multline}
As, by Lebesgue dominated convergence theorem,   $ \TLpNBH{\cd}{2}{1}{V^1_2-V^1_{2,m}}{\infty} $ and $M_2(T)$ tend to~$0$ when $m $ goes to $\infty,$ 
the first term on the right-hand side may be absorbed by the left-hand side if, 
 first, $m$ is taken large enough then, $T$ is sufficiently small. Inequality \eqref{est;tv2:uniq:1} thus reduces to 
 \begin{align*}
     \delta{\mathfrak{U}}(T) \le  
 C_{M_1}\int^T_0\Bigl(1+\NBH{\cd}{2}{1}{(V^1_1,V^2_1)}+\NBH{\cd+1}{2}{1}{V^2_1}\Bigr) \NBH{\cd-1}{2}{\infty}{\dV^1} .
\end{align*}
We plug \eqref{est:tv1:uni} into this inequality and we use the fact that the function $r\mapsto r\log(e+\frac{1}{r})$ is increasing, to eventually get
\begin{align*}
     \delta{\mathfrak{U}}(T) \le  
  C_{M_1}\int^T_0 \Bigl(1+\NBH{\cd}{2}{1}{(V^1_1,V^2_1)}+\NBH{\cd+1}{2}{1}{V^2_1}\Bigr)     \delta{\mathfrak{U}}\, \log\biggl( e + \frac{TM_1(T)+M_2(T)}{ {\delta{\mathfrak{U}}}}\biggr)\cdotp
\end{align*}
As
\begin{align*}
\Bigl(1+\NBH{\cd}{2}{1}{V^2_1}+\NBH{\cd+1}{2}{1}{(V^1_1,V^2_1)}\Bigr) \in L^1_T \ \text{     and    }\   \int^T_0 \dfrac{1}{r\log(e+\frac{1}{r})}dr=\infty,
\end{align*} 
Osgood's lemma entails that $\delta{\mathfrak{U}}(t)=0$ for all $0\le t\le T$  for small enough $T>0,$
and thus  $V_1\equiv V_2 $ on $[0,T]\times\Rd.$  Appealing to a connectivity argument  
yields uniqueness on the whole interval existence, which
completes the proof.

\section{Application to the compressible Navier-Stokes system}\label{sec:appli:CFNS}

We here consider the full Navier-Stokes system 
governing the evolution of a  \emph{Newtonian} compressible fluid in   $\Rd,$ with no external force. 
Denoting by  $u = u(t,x) \in \Rd$  its velocity field,   $\ro=\ro(t,x)\in \mathbb{R}_+,$ its density, 
 $p = p(t,x) \in \mathbb{R}$, its pressure, $\theta = \theta (t, x) \in \mathbb{R}_+,$ its absolute  temperature and 
  $e = e(t, x) \in \mathbb{R}$, its  internal energy by unit mass,  this system reads:
\begin{align}
     \label{CFNS}
     \begin{cases}
         &  \pt \ro+ \div(\ro u )=0,\\
          &   \ro \pt u + \ro u\cdot \n u- \div( 2\mu D(u)+ \lambda \div u\, I_d)+\n p=0,\\
          &\ro e_\theta(\pt \theta +u\cdot \n \theta)+\theta p_\theta \div u-\div(k \n \theta)= {\mathbb T},
     \end{cases}
 \end{align}
with \begin{equation} D(u)\defn\frac12(\nabla u+{}^t\nabla u)\andf
 {\mathbb T} \defn \label{def:vaphi:Dx}
   \frac{\mu}{2}\sum_{i,j=1}^d(\partial_{j} u^i+ \partial_{i} u^j)^2+ \lambda (\div u )^2.\end{equation}
To close the system, we make the following \textbf{Assumption D}:
\begin{itemize}
    \item The thermodynamic quantities $p$ and $e$ are smooth functions of $\ro,\theta>0$ 
   such that
    \begin{align}        \label{positive:Pr:eth}
        p_\ro \defn \frac{\partial p}{\partial\ro}>0 \esp{and} e_\theta\defn  \frac{\partial e}{\partial\theta}>0.
    \end{align}
    \item The viscosity coefficients $\lambda,\mu$ and the heat conductivity $k$ are  smooth functions of $\ro,\theta>0$
    that satisfy:
    \begin{equation}       \label{LAme:cnd}       \mu>0,\quad\nu\defn 2\mu+\lambda>0 \esp{and} k>0.
    \end{equation}
\end{itemize}
Denoting    $ U\defn(\ro,u,\theta),$ System  \eqref{CFNS} may be rewritten: 
\begin{align}
    \label{SHPDS:NS}
    S^0(U)\frac{d}{dt} U+\suma S^\alpha(U)\pal U-\sumab \pal\left( Y^{\alpha\beta}(U)\pbe U\right)=f(U)
\end{align}
where the matrices $S^\alpha(U)$ and $Y^{\alpha\beta}(U),$ and the function $f$ are defined on the phase space
$\mathcal{U}\defn \bigl\{(\ro,u,\theta)\in   \mathbb{R}^{d+2}/ \ro>0,\; \theta>0\bigr\}$ by
\begin{align*}
    S^0(U)\defn 
    \begin{pmatrix}
\frac{p_\ro}{\ro} & 0 & 0\\
0 & \ro I_d & 0\\
0&0 & \frac{\ro e_\theta}{\theta}
\end{pmatrix},\;\;\;
 f(U)\defn  \begin{pmatrix}
0\\
0 \\
 {\mathbb T}-k\n \theta\cdot \n (\frac{1}{\theta})
\end{pmatrix},
\end{align*}
\begin{align*}
   \suma S^\alpha(U)\xi_\alpha \defn
 \begin{pmatrix}
\frac{p_\ro}{\ro}u\cdot \xi & p_\ro \xi & 0\\
p_\ro ^T\xi & \ro(u\cdot \xi) I_d & p_\theta ^T\xi\\
0& p_\theta \xi & \frac{\ro e_\theta}{\theta}u\cdot \xi
\end{pmatrix},
\end{align*}
and
\begin{align*}
        Y^{\alpha\beta}\xi_\alpha\xi_\beta \defn
    \begin{pmatrix}
0& 0 \\
0 & Z^{\alpha\beta}\xi_\alpha\xi_\beta 
\end{pmatrix} 
\with  
Z^{\alpha\beta}\xi_\alpha\xi_\beta\defn 
\begin{pmatrix}
 \mu |\xi|^2+(\mu+\lambda)\xi\otimes \xi  {\rm Id} & 0\\
0 & \frac{k}{\theta} |\xi|^2
\end{pmatrix}\cdotp
\end{align*}
The matrix $S^0(U)$ is  diagonal positive  for all $U\in \mathcal{U}$, and  the matrices $S^\alpha(U)$ are real symmetric. Furthermore, a simple calculation reveals that\begin{align}
    \label{sca:Yal:be:V}
     \sumab \left\langle Z^{\alpha\beta}\xi_\alpha\xi_\beta A, A\right\rangle\ge \biggl(\min(\mu,\nu)|X|^2+\frac{k}{\theta}Y^2\biggr)|\xi|^2,
     \quad  X,\xi\in \Rd, \ Y\in \mathbb{R}, 
\end{align}
where $A\defn (X,Y)\in\Rd\times\R$ and $\left\langle \cdot ,\cdot \right\rangle$ denotes the canonic scalar product in $\mathbb{R}^{d}\times\R$. The right-hand side of \eqref{SHPDS:NS} is  a lower order quadratic term, that satisfies the 4-th condition of Assumption {\bf B}.
As a direct application of Theorem~\ref{Thm:loc:scri}, we get:
\begin{theorem} \label{thm:loc:sur:CFNS}  Let $d\geq1.$
   Let Assumption \textbf{D} be in force and let  $s\ge d/2.$  Assume that $\overline{\ro}>0$ and $\overline{\theta}>0,$ and  that  $(\ro_0,u_0,\theta_0)\in \mathcal{U}$ satisfies $\ro_0- \overline{\ro}\in \B{s+1}{2}{1}$, $u_0\in \B{s}{2}{1}$ and $\theta_0- \overline{\theta}\in \B{s}{2}{1}$.\\
   Then, there exists some $T>0$ such that  the problem \eqref{CFNS} supplemented with the initial data $ (\ro_0,u_0,\theta_0)$ has a unique solution $(\ro,u,\theta)\in \mathcal{U}$ on $[0,T]\times{\mathbb R}^d$  such that
      \[ \ro- \overline{\ro}  \in  {\cC}([0,T];\B{s+1}{2}{1}) \esp{and} (u,\theta-\overline{\theta})\in  {\cC}([0,T];\B{s}{2}{1})\cap  L^1_{T}(\B{s+2}{2}{1}). \]
\end{theorem}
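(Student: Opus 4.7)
The plan is to recognize System \eqref{CFNS} as an instance of the general system \eqref{Eq_b} and then to apply Theorem \ref{Thm:loc:scri} to the reformulation \eqref{SHPDS:NS}. Set $n_1=1$, $n_2=d+1$, $\overline{U}\defn(\overline{\ro},0,\overline{\theta})$, and identify
\[
V^1 \defn \ro-\overline{\ro}\in \R,\qquad V^2\defn (u,\theta-\overline{\theta})\in \R^{d+1}.
\]
Under Assumption \textbf{D}, we have $\overline{U}\in\cU\defn\{(\ro,u,\theta)\,:\,\ro>0,\theta>0\}$. Since $s\geq d/2$, Besov embeddings yield $(\ro_0-\overline{\ro},u_0,\theta_0-\overline{\theta})\in L^\infty$, and the pointwise constraints $\ro_0>0$, $\theta_0>0$ allow us to choose a bounded open set $\cO_0$ containing the range of $U_0$ and whose closure is contained in $\cU$.

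Next, one verifies that Assumption \textbf{B} is fulfilled by the coefficients displayed after \eqref{SHPDS:NS}. The matrix $S^0(U)$ is diagonal positive; in particular, $S^0_{11}(U)=p_\ro/\ro$ is a positive scalar and $S^0_{22}(U)=\mathrm{diag}(\ro I_d,\ro e_\theta/\theta)$ is symmetric positive definite on $\cO_0$, using \eqref{positive:Pr:eth}. Since $n_1=1$, the matrices $S^\alpha_{11}(U)=(p_\ro/\ro)u\cdot\xi$ are scalars, hence trivially symmetric, so item (\ref{cond:DD:2}) of Assumption \textbf{B} is immediate. The dissipation tensor has the required block form \eqref{def:Y:al:be} with $Z^{\alpha\beta}(U)=\mathrm{diag}(\mu\,I_d\otimes\cdot+(\mu+\lambda)\,\cdot\otimes\cdot,\,k/\theta)$, and Inequality \eqref{sca:Yal:be:V} implies the strong ellipticity \eqref{strong_elli} with
\[
c_1(U)\defn \min\!\bigl(\mu(U),\,\nu(U),\,k(U)/\theta\bigr)>0\quad\text{on }\cO_0.
\]

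The main item to check concerns the source term. Here the first two components of $f$ in \eqref{SHPDS:NS} vanish, so $f^1\equiv 0$ and $f^{21}\equiv f^{22}\equiv 0$. The remaining component reads
\[
f^{23}(U,\n U^2)= \frac{\mu}{2}\sum_{i,j=1}^d(\partial_j u^i+\partial_i u^j)^2 + \lambda(\div u)^2 + \frac{k}{\theta^2}\,|\n\theta|^2,
\]
which is indeed (at most) quadratic in $\n U^2=(\n u,\n\theta)$, vanishes at $(\overline{U},0)$, and satisfies the regularity requirement of \eqref{f=f(f1,f2)}. Therefore Assumption \textbf{B} holds in full.

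With all hypotheses verified, Theorem \ref{Thm:loc:scri} applies and yields a unique local-in-time solution $V=(V^1,V^2)\in E_T^s$ of \eqref{SHPDS:NS} with $U=V+\overline{U}$ in a neighborhood of $\overline{\cO_0}$ within $\cU$. Translating back, this provides the announced solution $(\ro,u,\theta)$ of \eqref{CFNS} with
\[
\ro-\overline{\ro}\in\cC([0,T];B^{s+1}_{2,1}),\qquad (u,\theta-\overline{\theta})\in\cC([0,T];B^{s}_{2,1})\cap L^1_T(B^{s+2}_{2,1}),
\]
and with $\ro>0$, $\theta>0$ on $[0,T]\times\R^d$. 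No step here is expected to be difficult; the only point requiring a little care is the verification that the quadratic terms arising from viscous dissipation and heat diffusion match the structural constraint on $f^{23}$ imposed in Assumption \textbf{B}, which is transparent from the explicit formula above.
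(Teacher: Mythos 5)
Your proposal is correct and follows precisely the same route as the paper: the paper proves Theorem \ref{thm:loc:sur:CFNS} by displaying the matrices $S^0,S^\alpha,Y^{\alpha\beta}$ and $f$ in the reformulation \eqref{SHPDS:NS}, observing that $S^0$ is diagonal positive, the $S^\alpha$ are symmetric, \eqref{sca:Yal:be:V} gives \eqref{strong_elli}, and the right-hand side is a lower-order quadratic term satisfying item (4) of Assumption \textbf{B}, then invoking Theorem \ref{Thm:loc:scri}. You simply unpack these checks in more detail (the identification $n_1=1$, $V^1=\ro-\overline\ro$, $V^2=(u,\theta-\overline\theta)$, the explicit ellipticity constant, and the quadratic structure of $f^{23}$), all of which is accurate.
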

Assuming that the viscosity coefficients $\mu$ and $\nu,$ and the pressure $p$ only depend  on $\rho,$ 
the first two equations of \eqref{CFNS} may be seen as the following closed system (the so-called barotropic compressible Navier-Stokes system):
 \begin{align}     \label{CNS:ro}
     \begin{cases}         &  \pt \ro+ u\cdot\n \ro+ \ro\div u=0\\
          &   \ro \pt u + \ro u\cdot \n u- \div( 2\mu(\ro) D u+ \lambda(\ro) \div u I_d)+\n p(\ro)=0.
     \end{cases} \end{align}
 If  assuming   \eqref{LAme:cnd},  
then Assumption  \textbf{C} is satisfied and Theorem \ref{thm:loc:cri} allows to recover  the following result 
that has  been proved by the second author  in \cite{Danchin07}:
\begin{theorem}    \label{thm:loc:cri:CFNS}
 Assume $d\geq2$. Let $\overline{\ro}>0$  and  suppose that the initial data $(\ro_0,u_0)$ satisfy $\ro_0- \overline{\ro}\in \BH{\cd}{2}{1}$, $u_0\in \BH{\cd-1}{2}{1}$ and $\ro_0$ bounded away from zero.
 
   Then, System \eqref{CNS:ro} supplemented with the initial data $ (\ro_0,u_0)$ has a unique solution $(\ro,u)$ on $[0,T]\times\Rd$ for some  $T>0,$
   with $\rho$ bounded away from zero, 
 \[  \ro-\overline{\ro}\in  {\cC}([0,T];\BH{\cd}{2}{1}) \esp{and} u\in {\cC}([0,T];\BH{\cd-1}{2}{1})\cap  L^1_{T}(\BH{\cd+1}{2}{1}). \]
\end{theorem}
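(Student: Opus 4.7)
The plan is to exhibit \eqref{CNS:ro} as a special case of the abstract system \eqref{Eq_b} satisfying Assumption \textbf{C}, and then invoke Theorem \ref{thm:loc:cri} directly. Setting $U=(\rho,u)$ with $n_1=1$ and $n_2=d$, $V^1=\rho-\bar\rho$, $V^2=u$, I would first symmetrize the mass equation by multiplying it by $p'(\rho)/\rho$ (legitimate thanks to the positivity of $p'$ and $\rho$ near $\bar\rho$). This puts \eqref{CNS:ro} into the form \eqref{Eq_b} with
\[ S^0(U) = \begin{pmatrix} p'(\rho)/\rho & 0 \\ 0 & \rho\, I_d \end{pmatrix}, \qquad \sum_\alpha S^\alpha(U)\xi_\alpha = \begin{pmatrix} (p'(\rho)/\rho)\, u\cdot\xi & p'(\rho)\,\xi \\ p'(\rho)\,\xi^T & \rho\,(u\cdot\xi)\, I_d \end{pmatrix}, \]
and the viscous block $Z^{\alpha\beta}(\rho)$ coming from the Newtonian stress tensor, depending on $\rho$ alone.

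Next, I would verify Assumption \textbf{C} item by item. Block diagonality and positivity of $S^0$, with $S^0_{22}=\rho I_d$ a function of $U^1$ alone, gives (1). The blocks $S^\alpha_{21}=p'(\rho)\, e_\alpha$ (independent of $U^2$) and $S^\alpha_{22}=\rho\,u_\alpha I_d$ (linear in $U^2$) are affine in $U^2$, so (2) holds. A direct computation yields $\widetilde S^\alpha_{12}=(S^0_{11})^{-1}S^\alpha_{12}=\rho\, e_\alpha^T$, depending only on $U^1$, while $\widetilde S^\alpha_{11}=(S^0_{11})^{-1}S^\alpha_{11}=u_\alpha$ is scalar (hence symmetric), linear in $U^2$, and independent of $U^1$; this gives (3). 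Since $\mu,\lambda$ are functions of $\rho$ alone, the matrices $Z^{\alpha\beta}$ depend only on $U^1$, giving (4). There is no zeroth-order source, so (5) is trivial. The strong ellipticity \eqref{strong_elli} follows from $\mu>0$ and $2\mu+\lambda>0$ via the splitting of $X\in\Rd$ into components parallel and perpendicular to $\xi$.

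Then, since $\rho_0$ is bounded away from zero and $\rho_0-\bar\rho\in\BH{\cd}{2}{1}\hookrightarrow L^\infty$, one can choose a bounded open set $\cO_0^1$ with $\overline{\cO_0^1}\subset\cU^1=(0,\infty)$ containing the range of $\rho_0$. All hypotheses of Theorem \ref{thm:loc:cri} are thus in force, and its conclusion produces the unique local-in-time solution $(\rho,u)$ on $[0,T]\times\Rd$, with $\rho$ bounded away from zero (since $U^1\in\cO^1\subset\subset\cU^1$) and with the announced regularity.

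There is essentially no analytic obstacle here, since all the heavy lifting has been absorbed into Theorem \ref{thm:loc:cri}; the only point requiring care is the structural check of Assumption \textbf{C}, and this is exactly where the \emph{barotropic} character of the system enters decisively: were $\mu,\lambda$ or $p$ allowed to depend on a further unknown like temperature, the matrices $\widetilde S^\alpha_{12}$ or $Z^{\alpha\beta}$ would cease to be functions of $U^1$ alone, and the critical-regularity framework would break down because the $L^\infty$ control on $U^2$ is not available in $\BH{\cd-1}{2}{1}$.
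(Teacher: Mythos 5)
Your proof is correct and follows essentially the same route as the paper: symmetrize the mass equation, cast the barotropic system into the form \eqref{Eq_b}, check Assumption \textbf{C} item by item (which the paper merely asserts), and invoke Theorem \ref{thm:loc:cri}. Your itemized verification of the structural hypotheses, and the closing remark explaining why the barotropic restriction is essential to Assumption \textbf{C}, are a faithful and somewhat more explicit account of what the paper leaves implicit.
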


\medbreak
\paragraph{\textbf{Acknowledgments}}\textit{The first author has received funding from the European Union's Horizon 2020 research and innovation programme under the Marie Sk\l{}odowska-Curie grant agreement \textnumero~945332.}



\appendix

\section{Littlewood-Paley decomposition and Besov spaces}
\label{appendix:LP}
Here  we briefly present some results on the Littlewood-Paley decomposition and Besov spaces. 
More details may be  found in \cite[Chap. 2]{HajDanChe11}. 
\smallbreak
To define the Littlewood-Paley decomposition, we  fix some smooth radial non increasing function $\chi$ with $ \mathrm{Supp}\chi \subset B(0,\frac{4}{3})$  and $\chi \equiv 1$  on $ B(0,\frac{3}{4})$, then set $\varphi(\xi)=\chi(\frac{\xi}{2})-\chi(\xi)$ so that
\begin{align*}
    \chi+\sum_{j\ge 0} \varphi(2^{-j}\cdot)=1 \ \text{ on } \ \Rd \esp{and} \sum_{j\in \mathbb{Z}} \varphi(2^{-j}\cdot) =1  \ \text{ on }\ \Rd\backslash\{0\}. 
\end{align*}
We introduce the following homogeneous  and nonhomogeneous spectral cut-off operators: 
\begin{align}
    \label{def:DDj:Dj}
    \begin{split}
    &\DDj \defn \varphi(2^{-j}D) \;\; \text{for all}\; j\in \mathbb{Z}, \\
    & \Dj=\DDj  \;\; \text{for all}\; j\ge 0,\;\; \Delta_{-1}= \Dot{S}_0 \esp{and} \Dj=0  \;\; \text{for all}\; j< -1,\\[-1ex]
    \end{split}
\end{align}
\begin{align}    \label{def:dot:S:j}
    \Dot{S}_j =\chi(2^{-j}D) \;\; \text{for all}\; j\in \mathbb{Z}\esp{and} S_j\defn\Dot{S}_j \;\; \text{for all}\; j\ge 0,\;\; S_{j}=0 \;\; \text{for all}\; j\le  -1.
    \end{align}
We denote by  $\mathcal{S}'_h$  the set of all tempered distributions $z$ such that 
\begin{align}
    \label{cnd:on S'h}
    \underset{j \rightarrow -\infty}{\lim} \Dot{S}_j z=0.
\end{align}
For $s\in\R$ and $p\in[1,\infty],$ we introduce the homogeneous Besov semi-norms (resp. nonhomogeneous Besov norms):
\begin{align}
    \label{def:NBH:NB}
    \NBH{s}{p}{r}{z} \defn \Vert 2^{js} \Vert\DDj  z\Vert_{L^p}\Vert_{\ell^r(\Z)} \quad (\text{resp. }   \NB{s}{p}{r}{z} \defn \Vert 2^{js} \Vert\Dj  z\Vert_{L^p}\Vert_{\ell^r(j\geq-1)}).
\end{align}
Then, for any $s\in \mathbb{R}$ and $r\in [1,\infty]$ we define the homogeneous Besov spaces  $\BH{s}{p}{r}$ (resp. nonhomogeneous Besov spaces $\B{s}{p}{r}$) to be the subset of those  $z$ in $\mathcal{S}'_h$ (resp.  those 
tempered distributions $z$) such that $\NBH{s}{p}{r}{z} $ (resp. $\NB{s}{p}{r}{z}$) is finite.

Although in the study of non-stationary PDEs spaces of type $L^\ro(0,T;X)$ for appropriate
Banach spaces $X$ and Lebesgue exponent $\rho$ come up naturally,
we sometimes needed  to use the Chemin-Lerner spaces \cite{Chemin1999}  that are defined below: 
\begin{definition}
    \label{def:Chemin lener}
    Let  $\ro$ in $[1,\infty]$ and time $T \in [0, \infty]$. We set
    \begin{align*}
        \TLpNB{s}{2}{r}{z}{\ro}\defn \Vert 2^{js}\Vert \Dj z\Vert_{L^\ro_T(L^2)}\Vert_{\ell^r(j\geq-1)}\esp{with}  \Vert z \Vert_{L^\ro_T(L^2)} \defn \Vert z\Vert_{L^\ro(0,T;L^2)}.
    \end{align*}
   Then, $\widetilde{L}^\ro_T(\B{s}{2}{r})$
 is the set of tempered distributions $z$ on $[0,T]\times \Rd$ such that $ \TLpNB{s}{2}{r}{z}{\ro}<\infty.$
    \end{definition}
    We also set $\widetilde{\cC}([0,T];\B{s}{2}{r})\defn \widetilde{L}^\infty_T(\B{s}{2}{r})\cap \cC([0,T];\B{s}{2}{r}) $
and define similarly spaces  $\widetilde{L}^\ro_T(\BH{s}{2}{r}).$
 \smallbreak
 Let us emphasize that, according
to the Minkowski inequality, we have:
\begin{align}
    \label{link:CL:Besov:space}
    \TLpNB{s}{2}{r}{z}{\ro}\le  \LpNB{s}{2}{r}{z}{\ro},\; \text{if}\; r\ge \ro \esp{and}
     \LpNB{s}{2}{r}{z}{\ro} \le  \TLpNB{s}{2}{r}{z}{\ro}, \; \text{if}\; r\le \ro.
\end{align}
We keep the same notation for Besov spaces pertaining to functions with several components. 
\medbreak
 In order to bound the commutator  terms, we used the following results:

\begin{proposition}
	 \label{propo_commutator-BH}
	 Let $\mathbb{B}_{2,r}^{s}$  designate $\B{s}{2}{r}$ and $\BH{s}{2}{r}$. The following inequalities hold true
 for all  $\sigma>0$:
     \begin{align}
       \label{comm:est:a:b:sig>0}
        \normede{[a,\Dj] b} &\le C2^{-j\sigma}c_j (\normeinf{\n a} \lVert b \rVert_{\mathbb{B}_{2,1}^{\sigma-1} }+ \normeinf{b} \lVert \n a \rVert_{\mathbb{B}_{2,1}^{\sigma-1} }  ) \esp{with} \sum_j c_j =1,\\
 \label{comm:est:a:b:sig>0b}
        \normede{[a,\Dj] b} &\le C2^{-j\sigma}c_j (\normeinf{\n a} \lVert b \rVert_{\mathbb{B}_{2,1}^{\sigma-1} }+ \|b\|_{\mathbb B^{-1}_{\infty,\infty}} \lVert \n a \rVert_{\mathbb{B}_{2,1}^{\sigma }}  ) \esp{with} \sum_j c_j =1.
   \end{align}
      If $\sigma\ge \cd+1$, there also holds:
    \begin{align}
       \label{comm:est:a:b:sig>cd}
        \normede{[a,\Dj] b} \le C2^{-j\sigma}c_j  \NB{\sigma-1}{2}{1}{\n  a}\NB{\sigma-1}{2}{1}{b} \esp{with} \sum_{\substack{j\ge -1}} c_j =1.
   \end{align}
	If  $-\frac d2 < \sigma \le \cd+1 $,    then
   \begin{align}
       \label{comm:est:a:b}
        \normede{[a,\Dj] b} \le C2^{-j\sigma}c_j \lVert \n a \rVert_{\mathbb{B}_{2,\infty}^{\cd} \cap L^\infty}  \lVert b \rVert_{\mathbb{B}_{2,1}^{\sigma-1} }  \esp{with} \sum_j c_j =1,   \end{align}
   and if  $-\frac d2 \leq \sigma < \cd+1,$ we also have:
   \begin{align}
       \label{comm:est:a:b:inf}
       \underset{j}{\sup} \normede{[a,\Dj] b} \le C2^{-j\sigma} \Vert\n a\Vert_{\mathbb{B}^{\cd}_{2,\infty}\cap L^\infty} \Vert b\Vert_{\mathbb{B}^{\sigma-1}_{2,\infty}}.   \end{align}
    Similar results hold  true  if we replace $L^2$ by $L^\ro_T(L^2)$ in the l.h.s and 
   use Chemin-Lerner norms  in the r.h.s.  For instance, 
   Inequality \eqref{comm:est:a:b:sig>cd} becomes for all $1\leq\rho,\rho_1,\rho_2\leq\infty$ s.t. $1/\rho=1/\rho_1+1/\rho_2,$
  $$     \|[a,\Dj] b\|_{L_T^\rho(L^2)} \le C2^{-j\sigma}c_j  \|\nabla a\|_{\wt L_T^{\rho_1}(B^{\sigma-1}_{2,1})}
  \|b\|_{\wt L_T^{\rho_2}(B^{\sigma-1}_{2,1})}
   \esp{with} \sum_{\substack{j\ge -1}} c_j =1.$$
  	 \end{proposition}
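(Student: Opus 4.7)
The plan is to handle all five estimates in a uniform manner via Bony's paradifferential calculus, which is the standard toolbox developed in detail in \cite[Chap.~2]{HajDanChe11}. Writing $ab$ and $a\Delta_j b$ by means of the Bony decomposition $fg = T_f g + T_g f + R(f,g)$, one gets the identity
$$[a,\Delta_j]b = [\Delta_j,T_a]b \;+\; \Delta_j(T_b a) - T_{\Delta_j b} a \;+\; \Delta_j R(a,b) - R(a,\Delta_j b).$$
The first term is the genuine commutator and carries the gain of one derivative. The remaining terms are controlled by  paraproduct/remainder continuity on Besov spaces (classical estimates on $T$ and $R$ that are recorded in the same reference).

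To bound the principal term, I would use the kernel representation
$$[\Delta_j, T_a]b = \sum_{|k-j|\le 4} [\Delta_j, S_{k-1}a]\Delta_k b,\qquad ([\Delta_j,\phi]v)(x)=\int 2^{jd}h(2^j(x-y))(\phi(x)-\phi(y))v(y)\,dy,$$
with $h=\mathcal{F}^{-1}\varphi$ a Schwartz function. A first-order Taylor expansion of $S_{k-1}a$ and the $L^1$-boundedness of $|z|h(z)$ give the pointwise gain
$$\|[\Delta_j,S_{k-1}a]\Delta_k b\|_{L^2}\lesssim 2^{-j}\|\nabla S_{k-1}a\|_{L^\infty}\|\Delta_k b\|_{L^2}.$$
Together with Bernstein inequalities on the high-high remainder, this is the origin of the factor $\|\nabla a\|_{L^\infty}$ (or, in the subcritical case, of $\|\nabla a\|_{\dot B^{d/2}_{2,\infty}\cap L^\infty}$ which  takes over once $L^\infty$ is not controlled by Besov embedding).

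Estimate \eqref{comm:est:a:b:sig>0} for $\sigma>0$ then follows after multiplying by $2^{j\sigma}$, summing against $\ell^1(\mathbb Z)$, and using the standard paraproduct bounds $\|T_b a\|_{B^\sigma_{2,1}}\lesssim \|b\|_{L^\infty}\|\nabla a\|_{B^{\sigma-1}_{2,1}}$ and $\|R(a,b)\|_{B^\sigma_{2,1}}\lesssim \|\nabla a\|_{L^\infty}\|b\|_{B^{\sigma-1}_{2,1}}+\|b\|_{L^\infty}\|\nabla a\|_{B^{\sigma-1}_{2,1}}$ for the non-commutator pieces. Variant \eqref{comm:est:a:b:sig>0b} is obtained by reshuffling regularity between $a$ and $b$, replacing $\|b\|_{L^\infty}$ by $\|b\|_{\dot B^{-1}_{\infty,\infty}}$ at the price of one derivative on $\nabla a$; the high-regularity case \eqref{comm:est:a:b:sig>cd} is an immediate consequence once the embedding $B^{\sigma-1}_{2,1}\hookrightarrow L^\infty$ is used to absorb both $L^\infty$ norms. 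For \eqref{comm:est:a:b} and \eqref{comm:est:a:b:inf} in the subcritical range $-d/2<\sigma\le d/2+1$, the same decomposition applies, but one must invoke the refined paraproduct estimates valid for negative-index Besov spaces (available thanks to $\nabla a \in \dot B^{d/2}_{2,\infty}\cap L^\infty$), and for \eqref{comm:est:a:b:inf} replace the $\ell^1$ summation by an $\ell^\infty$ one.

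The main difficulty is the subcritical estimate \eqref{comm:est:a:b}: the loss of $L^\infty$ control on $b$ forces one to track carefully the summability indices in the remainder $R(a,b)$ and to check that the paraproduct $T_b a$ remains tractable when only $\nabla a\in\dot B^{d/2}_{2,\infty}$ is assumed, since this is an endpoint space. The time-integrated Chemin--Lerner variants are then obtained by applying H\"older's inequality in time inside each of the estimates above, the dyadic coefficients $c_j$ and the prefactors $\|\nabla a\|_{\cdots}$, $\|b\|_{\cdots}$ being replaced by their $L^{\rho_1}_T$, $L^{\rho_2}_T$ counterparts.
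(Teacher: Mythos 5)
Your proposal is correct and takes essentially the same route as the paper, which proves the proposition by citing \cite[Lemma 2.100]{HajDanChe11} (whose proof is precisely the Bony decomposition of the commutator that you spell out, with the paraproduct commutator handled by the kernel/Taylor argument giving the $2^{-j}\|\nabla a\|_{L^\infty}$ gain, and the remaining paraproduct/remainder pieces bounded by the standard continuity results), modifying only the bound on the last piece via \cite[Theorem 2.82]{HajDanChe11} to obtain \eqref{comm:est:a:b:sig>0b}. Your "reshuffling of regularity between $a$ and $b$'' for \eqref{comm:est:a:b:sig>0b} and your H\"older-in-time argument for the Chemin--Lerner variants likewise match the paper's (implicit) reasoning.
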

	 \begin{proof}
	 All the above estimates but the second one are stated almost as is in \cite[Lemma 2.100]{HajDanChe11}
	 and follow from the decomposition that is performed at the top of page 113 therein.
	 As for Inequality \eqref{comm:est:a:b:sig>0b}, it may be proved as \eqref{comm:est:a:b:sig>0}
	 except that the third term $R_j^3$ defined at page 113  is bounded by means of Theorem 2.82 as follows:
	 $$\|R^3_j\|_{L^2}\leq Cc_j2^{-j\sigma} \|b\|_{\mathbb B^{-1}_{\infty,\infty}} \lVert \n a \rVert_{\mathbb{B}_{2,1}^{\sigma }}.$$
	 This gives  the result.   
	 		 \end{proof}
	 
  The following product laws in Besov spaces have been used repeatedly.
	 \begin{proposition}
	 \label{propo_produc_BH}
	     Let $(s,r)\in ]0,\infty[\times[1,\infty]$. Then $\mathbb{B}^s_{2,r}\cap L^\infty$ is an algebra and we have 
	     \begin{equation}	         \label{product_propo1}
	      \lVert ab \rVert_{\mathbb{B}_{2,r}^{s}}  \le C\bigl(\normeinf{a}  \lVert b \rVert_{\mathbb{B}_{2,r}^{s}} 
	      + \normeinf{b}  \lVert a \rVert_{\mathbb{B}_{2,r}^{s}}\bigr)\cdotp
	     \end{equation}
	Moreover, if  $-d/2< s \le d/2$, then the following inequality holds:
	      \begin{equation}	       \label{product_propo2}
	     \lVert ab \rVert_{\mathbb{B}_{2,1}^{s}} \le C  \lVert a \rVert_{\mathbb{B}_{2,1}^{\cd}}  \lVert b \rVert_{\mathbb{B}_{2,1}^{s}}
	     \end{equation}
      and if $  -d/2\le s < d/2,$
       \begin{equation}	         \label{product:propo:uniq}
           \lVert ab \rVert_{\mathbb{B}_{2,\infty}^{s}} \le C  \lVert a \rVert_{\mathbb{B}_{2,\infty}^{\cd}\cap L^\infty}  \lVert b \rVert_{\mathbb{B}_{2,\infty}^{s}}
	     \end{equation}
	      If $ s > d/2 $ (or $s= d/2 $ and $r=1$), 
	     \begin{equation}
	         \label{product_propo3}
	         \NB{s}{2}{r}{ab} \le C\NB{s}{2}{r}{a}\NB{s}{2}{r}{b}.
	     \end{equation}
	     Finally,  for all $s>0,$ we have
	         \begin{equation}\label{product_propo4}
	         \NB{s}{2}{r}{ab} \le C\bigl(\|a\|_{L^\infty}\NB{s}{2}{r}{b} +\|b\|_{B^{-1}_{\infty,\infty}} \NB{s+1}{2}{r}{a}\bigr)\cdotp
	     \end{equation}
  	 	      \end{proposition}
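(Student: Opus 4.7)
The whole proposition rests on Bony's paraproduct decomposition
\[ab \;=\; T_ab + T_ba + R(a,b),\qquad T_ab\defn\sum_{j} \dot S_{j-1}a\,\DDj b,\qquad R(a,b)\defn\sum_{|j-j'|\le1}\DDj a\,\DDj[']{b},\]
together with the two fundamental continuity estimates it enjoys in Besov spaces: for any $s\in\R$ and $r\in[1,\infty]$,
\begin{equation}\label{eq:parapro}
\|T_ab\|_{\mathbb B^s_{2,r}}\le C\|a\|_{L^\infty}\|b\|_{\mathbb B^s_{2,r}},\qquad
\|T_ab\|_{\mathbb B^{s+t}_{2,r}}\le C\|a\|_{\mathbb B^t_{\infty,\infty}}\|b\|_{\mathbb B^s_{2,r}}\ \text{if}\ t<0,
\end{equation}
and, whenever $s_1+s_2>0$ (with obvious Young-type condition on the $\ell^r$ exponents),
\begin{equation}\label{eq:rest}
\|R(a,b)\|_{\mathbb B^{s_1+s_2}_{2,r}}\le C\|a\|_{\mathbb B^{s_1}_{\infty,\infty}}\|b\|_{\mathbb B^{s_2}_{2,r}}
\quad\text{and}\quad \|R(a,b)\|_{\mathbb B^{s_1+s_2}_{2,r}}\le C\|a\|_{\mathbb B^{s_1}_{2,r_1}}\|b\|_{\mathbb B^{s_2}_{2,r_2}}\ \text{(with }\tfrac1{r_1}+\tfrac1{r_2}\!=\!\tfrac1r\text{)}.
\end{equation}
These are proved by the standard almost-orthogonality argument: the spectrum of $\dot S_{j-1}a\,\DDj b$ is contained in a dyadic annulus of size $2^j$, so $\Dk(T_ab)$ involves only finitely many $j$'s near $k$ and Bernstein's inequality does the job; the spectrum of each term in $R(a,b)$ is contained in a ball of radius $\simeq 2^j$, whence convergence requires the positivity of $s_1+s_2$.

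With this machinery in hand, each inequality of the statement follows by writing $ab=T_ab+T_ba+R(a,b)$ and inserting the right variant of \eqref{eq:parapro}--\eqref{eq:rest}. For \eqref{product_propo1} (algebra property when $s>0$), the two paraproducts are handled by the first half of \eqref{eq:parapro}, and the remainder by the first estimate of \eqref{eq:rest} with $s_1=0$ and $s_2=s$ (using $L^\infty\hookrightarrow \mathbb B^0_{\infty,\infty}$) which is legitimate since $s>0$. For \eqref{product_propo2} (i.e.\ $-d/2<s\le d/2$ with $r=1$), $T_ab$ is bounded by $\|a\|_{L^\infty}\|b\|_{\mathbb B^s_{2,1}}\le C\|a\|_{\mathbb B^{d/2}_{2,1}}\|b\|_{\mathbb B^s_{2,1}}$ via the embedding $\mathbb B^{d/2}_{2,1}\hookrightarrow L^\infty$; the second paraproduct is estimated by the second inequality in \eqref{eq:parapro} with $t=s-d/2\le 0$ (the endpoint $s=d/2$ falls in the first case); the remainder uses the second part of \eqref{eq:rest} with $s_1=d/2$, $s_2=s$, which demands the positivity condition $s>-d/2$. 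The estimate \eqref{product:propo:uniq} is identical up to replacing the $\ell^1$ indices by $\ell^\infty$ ones. Inequality \eqref{product_propo3} follows either from \eqref{product_propo1} plus the embedding $\mathbb B^s_{2,r}\hookrightarrow L^\infty$, or is a particular case of \eqref{product_propo2}.

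The only place where a little care is required is \eqref{product_propo4}, because we want the $L^\infty$ norm on $a$ but a \emph{negative} regularity norm $\|b\|_{\mathbb B^{-1}_{\infty,\infty}}$ on $b$, paying one extra derivative on $a$. The paraproduct $T_ab$ is handled directly by the first half of \eqref{eq:parapro}. For $T_ba$ we use the second half of \eqref{eq:parapro} with $t=-1$, which yields $\|T_ba\|_{\mathbb B^s_{2,r}}\le C\|b\|_{\mathbb B^{-1}_{\infty,\infty}}\|a\|_{\mathbb B^{s+1}_{2,r}}$. The remainder $R(a,b)$, taken in $\mathbb B^s_{2,r}$, may be split as $R(a,b)=R(a,b)\big|_{s=s+1+(-1)}$, i.e.\ via the first inequality in \eqref{eq:rest} with $s_1=-1$ on $b$ and $s_2=s+1$ on $a$; the positivity condition reads $s>0$, which is exactly what is assumed, and produces the term $\|b\|_{\mathbb B^{-1}_{\infty,\infty}}\|a\|_{\mathbb B^{s+1}_{2,r}}$. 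Summing the three bounds yields \eqref{product_propo4}.

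The only mildly delicate point in the whole proposal is verifying the $\ell^r$-summability in the remainder estimate \eqref{eq:rest}: since each $\Dk R(a,b)$ couples infinitely many dyadic blocks of $a$ and $b$, convergence of the sum $\sum_{j\ge k}2^{k(s_1+s_2)}2^{-j(s_1+s_2)}(c_j d_j)$ only holds because $s_1+s_2>0$, and the $\ell^r$ structure is transferred by a discrete Young inequality. Once this is taken care of, the five inequalities are straightforward consequences of combining \eqref{eq:parapro} and \eqref{eq:rest} in the appropriate way.
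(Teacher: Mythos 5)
Your overall route -- Bony's decomposition $ab=T_ab+T_ba+R(a,b)$ together with the standard continuity results for the paraproduct and the remainder -- is exactly the one the paper has in mind (it cites \cite[Chap.~2]{HajDanChe11} for the first three inequalities, gets \eqref{product_propo3} from \eqref{product_propo1} plus the embedding $\mathbb{B}^s_{2,r}\hookrightarrow L^\infty$, and invokes Bony for \eqref{product_propo4}), and your treatment of \eqref{product_propo1}, \eqref{product_propo3} and \eqref{product_propo4} is correct (note only that \eqref{product_propo3} is \emph{not} a particular case of \eqref{product_propo2} when $s>d/2$; the other alternative you give is the right one). Two points, however, need repair. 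First, the second remainder estimate you display, $\|R(a,b)\|_{\mathbb{B}^{s_1+s_2}_{2,r}}\le C\|a\|_{\mathbb{B}^{s_1}_{2,r_1}}\|b\|_{\mathbb{B}^{s_2}_{2,r_2}}$, is false as stated: by H\"older, the remainder of two $L^2$-based factors lands in an $L^1$-based space, and a scaling check (the homogeneous seminorm $\|\cdot\|_{\dot B^{\sigma}_{2,r}}$ scales like $\lambda^{\sigma-d/2}$) shows the two sides of your inequality scale differently. The correct version reads $\|R(a,b)\|_{\mathbb{B}^{s_1+s_2}_{1,r}}\le C\|a\|_{\mathbb{B}^{s_1}_{2,r_1}}\|b\|_{\mathbb{B}^{s_2}_{2,r_2}}$ for $s_1+s_2>0$ (with $1/r\le 1/r_1+1/r_2$), followed by the embedding $\mathbb{B}^{\sigma}_{1,r}\hookrightarrow \mathbb{B}^{\sigma-\frac d2}_{2,r}$. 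With $s_1=\cd$, $s_2=s$ this yields precisely $R(a,b)\in\mathbb{B}^{s}_{2,1}$ under $s>-\cd$, so your proof of \eqref{product_propo2} survives once the auxiliary lemma is corrected; but the condition $s>-d/2$ you quote belongs to the corrected statement, not to the one you wrote.

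Second, the claim that \eqref{product:propo:uniq} is ``identical up to replacing the $\ell^1$ indices by $\ell^\infty$ ones'' leaves a genuine gap at the endpoint $s=-\cd$, which the statement includes. There the remainder sits at total regularity $s+\cd=0$, where your own positivity requirement fails; the borderline remainder estimate (into $B^{0}_{1,\infty}$, then embedded) requires $1/r_1+1/r_2\ge 1$, which is violated when both factors carry $\ell^\infty$ summability. This is exactly the place where the additional $L^\infty$ control built into the norm $\mathbb{B}^{\cd}_{2,\infty}\cap L^\infty$ must enter the estimate of $R(a,b)$ (your argument only uses it for $T_ab$), or a different argument must be supplied; for $-\cd<s<\cd$ your recipe is fine. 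So: same approach as the classical proofs the paper appeals to, but fix the misstated remainder law and treat the endpoint case of \eqref{product:propo:uniq} separately.
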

  The first above three estimates are classical (see e.g. \cite[Chap. 2]{HajDanChe11}).
  The fourth one follows from the first one and embedding. 
  Inequality  \eqref{product_propo3} can  be  proved by using Bony's decomposition and 
  then suitable continuity results for the paraproduct and remainder operators (see \cite[Section 2.8]{HajDanChe11}).
    We have similar results for the spaces $L^\ro_T(\B{s}{2}{r})$, $\widetilde{L}^\ro_T(\B{s}{2}{r})$, $L^\ro_T(\BH{s}{2}{r})$ and $\widetilde{L}^\ro_T(\BH{s}{2}{r}),$ see \cite{Dan01loc,Haspot11}.
	\medbreak
	 We also needed the following composition estimates.
	 	 \begin{proposition}
	 \label{propo_compo_BH}
  	     Let $f$ be a function in $C^\infty(\mathbb{R})$.          Let $r\in [1,\infty]$ and  $s \in ]0,\infty[$. If $f(0)=0$ then, for every real-valued function $u$ in $\B{s}{2}{r}\cap L^\infty$, the function $f\circ u $ belongs to $\B{s}{2}{r}\cap L^\infty$ and satisfies
       \begin{align}
           \label{compo:propo:base}
          \Vert  f\circ u\Vert_{{B}_{2,r}^{s}} \le C(f', \normeinf{u})\ \Vert u \Vert_{{B}_{2,r}^{s}}.
       \end{align}
       If both $u$ and $v$ are in $B^s_{2,1}\cap B^{\cd}_{2,1}$  with  $s>-d/2,$  then we have
      \begin{align}
          \label{comp:uv:propo:1}
          \NB{s}{2}{1}{f\circ u- f\circ v } \le C(f', \normeinf{u,v})(1+\NB{\max(s,\frac{d}{2})}{2}{1}{(u,v)})\NB{s}{2}{1}{ u-v} .
      \end{align}
      Furthermore, if $-\cd\le s < \frac{d}{2}$ then the last  inequality remains valid for $r=\infty$, that is,
         \begin{align}
          \label{comp:uv:propo:inf}
         \NB{s}{2}{\infty}{f\circ u- f\circ v } \le C(f', \normeinf{u,v})(1+\NB{\frac{d}{2}}{2}{1}{(u,v)})\NB{s}{2}{\infty}{ u-v}.
      \end{align}
      Similar results hold true  for homogeneous Besov spaces and Chemin-Lerner spaces.
	 \end{proposition}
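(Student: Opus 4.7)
The plan is to proceed by the classical paralinearization technique, treating the two inequalities \eqref{compo:propo:base} and \eqref{comp:uv:propo:1}--\eqref{comp:uv:propo:inf} separately, with the stability estimate reduced to the first one applied to a well-chosen auxiliary function.

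For the bound \eqref{compo:propo:base}, I would use the telescoping identity that follows from $f(0)=0$ and the convergence $S_j u\to 0$ in suitable topologies:
\[
f\circ u=\sum_{j\geq-1}\bigl(f(S_{j+1}u)-f(S_ju)\bigr)=\sum_{j\geq-1}m_j\,\Dj u,
\qquad m_j\defn\int_0^1 f'\bigl(S_ju+\tau\Dj u\bigr)\,d\tau,
\]
together with $\|m_j\|_{L^\infty}\leq \|f'\|_{L^\infty(-\|u\|_{L^\infty},\|u\|_{L^\infty})}$. The spectral supports of $m_j\Dj u$ give $\Delta_k(m_j\Dj u)\equiv0$ for $k\geq j+N_0$, so that
\[
\|\Delta_k(f\circ u)\|_{L^2}\leq C\sum_{j\geq k-N_0}\|m_j\|_{L^\infty}\|\Dj u\|_{L^2}.
\]
Multiplying by $2^{ks}$ and taking the $\ell^r$-norm in $k$ together with the fact that $s>0$ then yields \eqref{compo:propo:base} via a discrete Young-type convolution. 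This argument transfers verbatim to the homogeneous setting provided $u\in \mathcal{S}'_h$, and to Chemin–Lerner spaces by replacing $L^2$ with $L^\rho_T(L^2)$.

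For the difference bound \eqref{comp:uv:propo:1}, I would write
\[
f\circ u-f\circ v=g(u,v)\,(u-v),\qquad g(u,v)\defn\int_0^1 f'\bigl(v+\tau(u-v)\bigr)\,d\tau,
\]
and bound the right-hand side via the product laws of Proposition \ref{propo_produc_BH}. For $s>0$, estimate \eqref{product_propo1} gives
\[
\|f\circ u-f\circ v\|_{B^s_{2,1}}\leq C\bigl(\|g\|_{L^\infty}\|u-v\|_{B^s_{2,1}}+\|u-v\|_{L^\infty}\|g\|_{B^s_{2,1}}\bigr).
\]
For $-d/2<s\leq d/2$, the sharper inequality \eqref{product_propo2} applied to the couple $(g-f'(0),u-v)$ combined with $f'(0)(u-v)$ being trivially bounded gives
\[
\|f\circ u-f\circ v\|_{B^s_{2,1}}\leq C\bigl(1+\|g-f'(0)\|_{B^{d/2}_{2,1}}\bigr)\|u-v\|_{B^s_{2,1}}.
\]
The crucial ingredient is then to control $\|g-f'(0)\|_{B^{\max(s,d/2)}_{2,1}}$: since $g-f'(0)=\int_0^1[f'(v+\tau(u-v))-f'(0)]d\tau$ and $F\defn f'-f'(0)$ is smooth with $F(0)=0$, estimate \eqref{compo:propo:base} (applied pointwise in $\tau$ and integrated) yields
\[
\|g-f'(0)\|_{B^{\max(s,d/2)}_{2,1}}\leq C\bigl(f'',\|(u,v)\|_{L^\infty}\bigr)\|(u,v)\|_{B^{\max(s,d/2)}_{2,1}},
\]
which, plugged back, gives \eqref{comp:uv:propo:1}.

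The main obstacle is the endpoint estimate \eqref{comp:uv:propo:inf} with $r=\infty$ and possibly negative $s$: the $B^s_{2,\infty}$-product fails the usual $B^s_{2,1}$-type algebra bound, so one must instead invoke \eqref{product:propo:uniq}, namely $B^{d/2}_{2,\infty}\cap L^\infty\times B^s_{2,\infty}\to B^s_{2,\infty}$, which forces us to bound $g(u,v)$ in the smaller space $B^{d/2}_{2,1}\cap L^\infty$ (this is possible because the first step applied with $(s,r)=(d/2,1)$ provides exactly such a control, using $(u,v)\in B^{d/2}_{2,1}$). Once this product estimate is in place, the same linearization $f(u)-f(v)=g(u,v)(u-v)$ delivers \eqref{comp:uv:propo:inf}. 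Extending the whole scheme to homogeneous Besov and Chemin–Lerner spaces is routine, provided one works with distributions in $\mathcal{S}'_h$ so that the telescoping series in the first step actually converges to $f\circ u$.
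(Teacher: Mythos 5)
Your reduction of the stability estimates \eqref{comp:uv:propo:1}--\eqref{comp:uv:propo:inf} to the base estimate — writing $f\circ u-f\circ v=g(u,v)\,(u-v)$ with $g(u,v)=\int_0^1f'(v+\tau(u-v))\,d\tau$, splitting off $f'(0)$, invoking \eqref{product_propo2} (resp. \eqref{product:propo:uniq} at the endpoint $r=\infty$), and controlling $g(u,v)-f'(0)$ in $B^{d/2}_{2,1}$ via \eqref{compo:propo:base} applied to $f'-f'(0)$ — is precisely the adaptation of \cite[p.~449]{HajDanChe11} that the paper points to, and that part of your argument is sound.

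The genuine gap is in your proof of \eqref{compo:propo:base}. The assertion that ``the spectral supports of $m_j\Delta_j u$ give $\Delta_k(m_j\Delta_j u)\equiv 0$ for $k\ge j+N_0$'' is false: $m_j=\int_0^1 f'(S_ju+\tau\Delta_ju)\,d\tau$ is a nonlinear function of $u$, hence not band-limited, and the product $m_j\Delta_ju$ has no frequency localization at all (localization of that type only holds when the multiplier itself is low-frequency cut, as in a paraproduct term $S_{j-1}a\,\Delta_jb$). Your bound $\|\Delta_k(f\circ u)\|_{L^2}\le C\sum_{j\ge k-N_0}\|m_j\|_{L^\infty}\|\Delta_ju\|_{L^2}$ therefore only treats the easy range $j\gtrsim k$ and silently discards the contribution of $j\le k-N_0$, which is exactly where the work in Meyer's first linearization method lies. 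The missing ingredient is the smoothness of $m_j$: by the chain rule and Bernstein's inequality, $\|\partial^\alpha m_j\|_{L^\infty}\le C_\alpha(f,\|u\|_{L^\infty})\,2^{j|\alpha|}$, so that for any integer $M$ one has $\|\Delta_k(m_j\Delta_ju)\|_{L^2}\le C_M2^{-kM}\|\nabla^M(m_j\Delta_ju)\|_{L^2}\le C_M2^{(j-k)M}\|\Delta_ju\|_{L^2}$; choosing $M>s$ makes the sum over $j\le k-N_0$ convergent and yields \eqref{compo:propo:base} for every $s>0$ (this is the content of the lemma in \cite{HajDanChe11} on series of smooth functions with controlled derivatives, which is how the reference cited in the paper proceeds). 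The same correction must be in place before you transfer the argument to the homogeneous and Chemin--Lerner settings.
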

  \begin{proof}
      The proof of \eqref{compo:propo:base} can be found in \cite[pages 94 and 104]{HajDanChe11} while \eqref{comp:uv:propo:1}, and \eqref{comp:uv:propo:inf}  can be obtained by adapting the proof of first inequality of \cite[page 449]{HajDanChe11}.
  \end{proof}
    Finally, the following composition estimates
    enabled us to handle the lower  order terms:
  \begin{proposition}
	 \label{propo_Compo_u_v_BH}
  Let $0\le n_1\le n$ and $m$ be three integers. 	  Let $f: (X,Y)\in \mathbb{R}^{n_1}\times\mathbb{R}^{n-n_1} \mapsto f(X,Y)\in\mathbb{R}^m $ be a smooth function on $\mathbb{R}^n$. Assume that $f$ is affine with respect to $ Y.$ 
  
     If $f$  vanishes at $0_{\mathbb{R}^n}$, then for any $0<s\le \cd $  the following inequality holds true
  \begin{align}
      \label{comp:u_v:BH:ine:prop:1}
      \NB{s}{2}{1}{f(u,v)}\le   C(f', \normeinf{u})( \NB{s}{2}{1}{ v}(1+\NB{\cd}{2}{1}{u})+\NB{s}{2}{1}{ u} ) . 
  \end{align}
  Furthermore if $-\cd< s\le \cd,$ then we have
    for some  $C= C(f', \normeinf{u_1,u_2})$: 
    \begin{multline}
      \label{comp:u_v_1:2:BH:ine:prop:1}
      \NB{s}{2}{1}{f(u_1,v_1)-f(u_2, v_2)}\le  C  \NB{s}{2}{1}{ v_2-v_1}(1+\NB{\cd}{2}{1}{u_2})\\
      +C(1+\NB{\cd}{2}{1}{u_1}+\NB{\cd}{2}{1}{u_2})\biggl( \NB{\cd}{2}{1}{u_2-u_1}\NB{s}{2}{1}{ v_1}+ \NB{s}{2}{1}{ u_1-u_2}\biggr)\cdotp
  \end{multline}
    Finally  if   $ -\cd\le  s< \frac{d}{2}$ then we have
  \begin{multline}
      \label{comp:u_v_1:2:BH:ine:prop:inf}
      \NB{s}{2}{\infty}{f(u_1,v_1)-f(u_2, v_2)}\le  C  \NB{s}{2}{\infty}{ v_2-v_1}(1+\NB{\cd}{2}{1}{u_2})\\
      +C(1+\NB{\cd}{2}{1}{u_1}+\NB{\cd}{2}{1}{u_2}) \biggl(\NB{s}{2}{\infty}{u_2-u_1}\NB{\cd}{2}{1}{ v_1}+ \NB{s}{2}{\infty}{ u_1-u_2}\biggr),
      \end{multline}
      where $C= C(f', \normeinf{u_1,u_2}) $.
	 \end{proposition}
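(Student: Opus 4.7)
The plan is to exploit the affine structure in $Y$ by writing $f(X,Y) = A(X)Y + B(X)$ for smooth maps $A: \mathbb{R}^{n_1} \to \mathcal{M}_{m\times(n-n_1)}$ and $B: \mathbb{R}^{n_1} \to \mathbb{R}^m$, then reducing every estimate to the composition estimates already proved in Proposition~\ref{propo_compo_BH} and the product laws of Proposition~\ref{propo_produc_BH}. Note that the hypothesis $f(0)=0$ in \eqref{comp:u_v:BH:ine:prop:1} becomes $B(0)=0$, whereas $A(0)$ need not vanish.

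For \eqref{comp:u_v:BH:ine:prop:1}, I would decompose
$$f(u,v) = A(0)v + \bigl(A(u)-A(0)\bigr)v + B(u)$$
and bound the three pieces separately. The first is a constant linear map so $\|A(0)v\|_{B^s_{2,1}}\lesssim\|v\|_{B^s_{2,1}}$. For the second, since $s\in(0,d/2]$ we apply the product law \eqref{product_propo2} (or \eqref{product_propo3} if $s=d/2$) to get
$$\bigl\|(A(u)-A(0))v\bigr\|_{B^s_{2,1}}\le C\|A(u)-A(0)\|_{B^{d/2}_{2,1}}\|v\|_{B^s_{2,1}},$$
then \eqref{compo:propo:base} applied to the map $X\mapsto A(X)-A(0)$ (which vanishes at $0$) yields $\|A(u)-A(0)\|_{B^{d/2}_{2,1}}\le C(\|u\|_{L^\infty})\|u\|_{B^{d/2}_{2,1}}$. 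The third is handled directly by \eqref{compo:propo:base} applied to $B$: $\|B(u)\|_{B^s_{2,1}}\le C\|u\|_{B^s_{2,1}}$. Summing gives \eqref{comp:u_v:BH:ine:prop:1}.

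For \eqref{comp:u_v_1:2:BH:ine:prop:1}, I start from the identity
$$f(u_1,v_1)-f(u_2,v_2)=A(u_2)(v_1-v_2)+\bigl(A(u_1)-A(u_2)\bigr)v_1+\bigl(B(u_1)-B(u_2)\bigr).$$
The first term is bounded by \eqref{product_propo2} combined with \eqref{compo:propo:base}: $\|A(u_2)(v_1-v_2)\|_{B^s_{2,1}}\le C(1+\|u_2\|_{B^{d/2}_{2,1}})\|v_1-v_2\|_{B^s_{2,1}}$. For the second, another application of the product law and then the difference estimate \eqref{comp:uv:propo:1} applied to $A$ gives the $\|u_1-u_2\|_{B^{d/2}_{2,1}}\|v_1\|_{B^s_{2,1}}$ contribution with the correct prefactor in $\|u_1\|_{B^{d/2}_{2,1}}+\|u_2\|_{B^{d/2}_{2,1}}$. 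The third term is handled directly by \eqref{comp:uv:propo:1} applied to $B$. Proving \eqref{comp:u_v_1:2:BH:ine:prop:inf} is the same argument verbatim, but replacing \eqref{product_propo2} by its $B^s_{2,\infty}$ counterpart \eqref{product:propo:uniq}, and \eqref{comp:uv:propo:1} by \eqref{comp:uv:propo:inf}; crucially the $u$--norms remaining inside $C$ stay in $B^{d/2}_{2,1}$ (which controls $L^\infty$), so the index asymmetry is harmless.

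I do not anticipate a serious obstacle. The only delicate point is the endpoint case $s=d/2$ in \eqref{comp:u_v:BH:ine:prop:1}, where \eqref{product_propo2} only gives the expected bound for $s\le d/2$ but the constant still depends on $\|u\|_{B^{d/2}_{2,1}}$ as required; here one should invoke \eqref{product_propo3} in place of \eqref{product_propo2} to avoid any loss. Similarly, the endpoint $s=-d/2$ in \eqref{comp:u_v_1:2:BH:ine:prop:inf} forces one to be careful that the product law used indeed extends to $s=-d/2$ with the $B^{d/2}_{2,1}\cap L^\infty$ factor on the $u$--side, which is precisely the content of \eqref{product:propo:uniq}.
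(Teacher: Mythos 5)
Your proposal is correct and follows essentially the same route as the paper's proof: write $f(u,v)=\Lambda(u)v+\Gamma(u)$, split off the constant part of $\Lambda$, then bound each piece via the product laws \eqref{product_propo2}/\eqref{product:propo:uniq} and the composition estimates \eqref{compo:propo:base}, \eqref{comp:uv:propo:1}, \eqref{comp:uv:propo:inf}; for the difference estimates the paper uses the identical three-term decomposition $\Lambda(u_2)(v_2-v_1)+(\Lambda(u_2)-\Lambda(u_1))v_1+(\Gamma(u_2)-\Gamma(u_1))$. The only (harmless) deviation is your caution about the endpoint $s=\frac d2$, which is in fact already covered by \eqref{product_propo2}.
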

	\begin{proof}
	By assumption, there exist  two smooth functions $\Lambda$ and $\Gamma$ defined on $\mathbb{R}^{n_1}$ such that
     \[    f(u,v) = \Lambda(u) v+ \Gamma(u) \esp{(with} \Gamma(0_{\mathbb{R}^{n_1}})=0_{\mathbb{R}^m}\;\; \text{if}\;\; f(0_{\mathbb{R}^{n_1}},0_{\mathbb{R}^{n-n_1}})=0_{\mathbb{R}^{m}}).   \]

If  $ 0<s\le \frac{d}{2}$ then, applying the inequalities \eqref{product_propo2} and \eqref{compo:propo:base} to the term $ \Lambda(u) v $ yields the first term of the right-hand side of  inequality \eqref{comp:u_v:BH:ine:prop:1}.
Next, using Proposition \ref{propo_compo_BH} (recall that $s>0$) for the term $  \Lambda(u) $ gives the second term of \eqref{comp:u_v:BH:ine:prop:1}. 

 To prove \eqref{comp:u_v_1:2:BH:ine:prop:1}  (resp.\eqref{comp:u_v_1:2:BH:ine:prop:inf}), we use  the above decomposition to get
     \[  f(u_2,v_2)- f(u_1,v_1)= \Lambda(u_2)(v_2-v_1) +(\Lambda(u_2)-\Lambda(u_1))v_1 +(\Gamma(u_2)-\Gamma(u_1)).   \]
  Having this decomposition at hand, the first two terms of the last equality may be bounded from Inequalities \eqref{product_propo2}  (resp. \eqref{product:propo:uniq})  and  \eqref{compo:propo:base} (resp. \eqref{comp:uv:propo:inf}). Concerning the last one, we use Inequality \eqref{comp:uv:propo:1} (resp. \eqref{comp:uv:propo:inf}).
	\end{proof}

  
\section{Some inequalities}
\label{appendix:resu}
Here we gather a few inequalities that have been used repeatedly in the paper. The first one is the following well known result about a differential  inequality.
\begin{lemma}
\label{lem_der_int}
    Let $X$ be an a.e.  differentiable nonnegative function on $[0,T].$
    Assume that there exists a nonnegative constant $B$ and a measurable function $A :  [0, T] \to\mathbb{R}_+ $  such that
\[ \frac{1}{2}\frac{d}{dt} X +BX\le AX^{\frac{1}{2}} \esp{a.e on}   [0, T].\]
Then, for all $t\in [0,T]$, we have 
\[  X^{\frac{1}{2}}(t)+B\int^t_0 X^{\frac{1}{2}} \le X^{\frac{1}{2}}(0) + \int^t_0 A.\]
\end{lemma}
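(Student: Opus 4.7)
The natural approach is to reduce everything to a first-order linear differential inequality for $X^{1/2}$, with the caveat that $X^{1/2}$ need not be differentiable where $X$ vanishes. I would therefore work with a strictly positive regularization and then pass to the limit.

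\textbf{Step 1: regularization.} For $\varepsilon>0$, set $X_\varepsilon:=X+\varepsilon^2$, so that $X_\varepsilon\ge \varepsilon^2$ on $[0,T]$ and $X_\varepsilon$ is a.e.\ differentiable with $X_\varepsilon'=X'$. The function $t\mapsto X_\varepsilon^{1/2}(t)$ is then a.e.\ differentiable with
\[
\frac{d}{dt}X_\varepsilon^{1/2}=\frac{X'}{2X_\varepsilon^{1/2}}\quad\text{a.e. on }[0,T].
\]

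\textbf{Step 2: turning the hypothesis into a linear inequality.} Dividing the assumed inequality
$\tfrac12 X'\le AX^{1/2}-BX$ by $X_\varepsilon^{1/2}>0$ and using $X^{1/2}/X_\varepsilon^{1/2}\le 1$ together with $X/X_\varepsilon^{1/2}\ge 0$, I get
\[
\frac{d}{dt}X_\varepsilon^{1/2}\;\le\;A\,\frac{X^{1/2}}{X_\varepsilon^{1/2}}-B\,\frac{X}{X_\varepsilon^{1/2}}\;\le\;A-B\,\frac{X}{X_\varepsilon^{1/2}}\qquad\text{a.e.}
\]

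\textbf{Step 3: integration.} Assuming the regularity needed for the fundamental theorem of calculus (which holds in the intended applications, where $X$ is actually smooth in time), integrating over $[0,t]$ yields
\[
X_\varepsilon^{1/2}(t)+B\int_0^t\frac{X}{X_\varepsilon^{1/2}}\,ds\;\le\;X_\varepsilon^{1/2}(0)+\int_0^t A\,ds.
\]

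\textbf{Step 4: passing to the limit $\varepsilon\to 0^+$.} As $\varepsilon\searrow 0$, one has $X_\varepsilon^{1/2}(t)\to X^{1/2}(t)$ and $X_\varepsilon^{1/2}(0)\to X^{1/2}(0)$ pointwise. Moreover the integrand $X/X_\varepsilon^{1/2}$ increases monotonically to $X^{1/2}$ (with the convention $0/0=0$), so the monotone convergence theorem gives
\[
\int_0^t\frac{X}{X_\varepsilon^{1/2}}\,ds\;\longrightarrow\;\int_0^t X^{1/2}\,ds.
\]
Combining these limits delivers the desired estimate.

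\textbf{Expected main obstacle.} The only delicate point is the justification of the fundamental theorem of calculus in Step~3: under the mere assumption of a.e.\ differentiability this would require $X$ to be absolutely continuous. In the paper this lemma is applied to quantities of the form $\int_{\Rd}S^0(U)W_j\cdot W_j$ evaluated on smooth solutions, so absolute continuity (indeed smoothness) in $t$ is automatic and no additional argument is needed. In a general formulation, one would either add an absolute continuity hypothesis or interpret the inequality distributionally and use a mollification argument in time before Step~1.
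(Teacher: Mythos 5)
Your argument is correct, and there is in fact no in-paper proof to compare it with: the paper states Lemma \ref{lem_der_int} as a ``well known result'' and omits the argument entirely. Your route is the standard one — regularize with $X_\varepsilon=X+\varepsilon^2$, divide the hypothesis by $X_\varepsilon^{1/2}>0$ to get $\frac{d}{dt}X_\varepsilon^{1/2}\le A-B\,X/X_\varepsilon^{1/2}$ a.e., integrate, and pass to the limit by monotone convergence — and each step is sound. The caveat you flag in Step 3 is genuine and should be kept: with mere a.e.\ differentiability the statement is literally false (take $A=B=0$ and $X$ equal to the Cantor function, which satisfies $X'\le 0$ a.e.\ yet increases), so one must either read absolute continuity into the hypothesis or, as you observe, note that in every application in the paper $X(t)=\intd S^0(U)\,W_j\cdot W_j\,dx$ is $C^1$ in time along the (smooth) solutions considered, so the fundamental theorem of calculus applies and the integrated inequality is justified. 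One small simplification you might prefer: since $X/X_\varepsilon^{1/2}=X_\varepsilon^{1/2}-\varepsilon^2/X_\varepsilon^{1/2}\ge X_\varepsilon^{1/2}-\varepsilon$, Step 3 directly gives $X_\varepsilon^{1/2}(t)+B\int_0^t X_\varepsilon^{1/2}\le X_\varepsilon^{1/2}(0)+\int_0^t A+B\varepsilon t$, and letting $\varepsilon\to0^+$ (using $X^{1/2}\le X_\varepsilon^{1/2}$ on the left) concludes without invoking any convergence theorem; but your monotone-convergence step is equally valid.
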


The second class of inequalities concerns the following  linear parabolic equation:   
	    \begin{equation}
	 \label{lin:parabolic}
	     \left\{
	 \begin{array}{l}
	    S\pt V  - Z(D) V=0  \\
	     V(0)= V_0
	 \end{array}
	 \right.
	 \end{equation}
  where $S$ is a  positive definite Hermitian matrix and $Z \in C^\infty(\Rd\setminus\{0\};\mathcal{M}_n(\mathbb{C})) $ is  homogeneous of degree $\gamma \in \mathbb{R}$ and such that the matrix $Z(\xi)$  satisfies for some constant $\kappa>0$
    \begin{align}
        \label{posi:cond:Z}
       {\rm Re} (Z(\xi)z\cdot z)\ge \kappa |\xi|^\gamma|z|^2,\;\; \xi\in \Rd\backslash \{0\},\; z\in \mathbb{C}^n.
    \end{align}
\begin{proposition}
 \label{lem_lin_est}
     There exist  two   positive constants $c,C_0$ such that for all $s\in \mathbb{R},$   $T,h\in \mathbb{R}^+$ and $m\in \mathbb{N}$, the following estimates hold:
     \begin{align}
         \label{L-inf_j}
         \Vert V \Vert_{\Tilde{L}^\infty_T(\B{s}{2}{1})} \le C_0\NB{s}{2}{1}{V_0}\andf
        \sum_{j\ge m} 2^{js}\Vert \Dj V \Vert_{L^\infty_T(L^2)} \le C_0  \sum_{j\ge m} 2^{js} \Vert \Dj V_0 \Vert_{L^2},
     \end{align}
     \begin{align}
         \label{L-1_j}
        \int_T^{T+h}\sum_{j\ge m} \!2^{j(s+2)} \Vert\!\left(\Dj D^\gamma V,\Dj \pt V\right)\!\Vert_{L^2}\le C_0\sum_{j\ge m} 
       e^{-c2^{j\gamma}T}\left(1-e^{-c2^{j\gamma}h}\right)2^{js}\Vert \Dj V_0 \Vert_{L^2},
     \end{align}
     \begin{align}
         \label{L-1_las}
        \int_T^{T+h}\Vert (\Delta_{-1} V,\pt \Delta_{-1}V)\Vert_{L^2}\le C_0h\NB{s}{2}{1}{V_0}.
     \end{align}
    Inequalities \eqref{L-inf_j} and \eqref{L-1_j} are also valid  with $\DDj$, $j,m\in \mathbb{Z}$ and homogeneous
Besov norms. 
 \end{proposition}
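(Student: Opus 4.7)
The plan is to localize the equation in frequency using the Littlewood--Paley blocks and then perform an $L^2$ energy estimate on each block. Since $S$ is a constant Hermitian positive definite matrix and $Z(D)$ is a Fourier multiplier, both commute with every $\Dj$ (and $\DDj$), so each block $V_j \defn \Dj V$ satisfies the same equation as $V$ with initial datum $\Dj V_0$ and no commutator remainder.

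Taking the real part of the $L^2$ scalar product of the localized equation with $V_j$, combining the positivity of $S$ with the dissipativity assumption \eqref{posi:cond:Z}, and exploiting the spectral support of $\widehat{V_j}$ in the annulus $\{2^{j-1}\le|\xi|\le 2^{j+1}\}$ via Plancherel, one obtains for $j\ge 0$ the differential inequality
\begin{equation*}
\frac{d}{dt}\langle S V_j, V_j\rangle_{L^2} + c\,2^{j\gamma}\langle S V_j, V_j\rangle_{L^2} \le 0,
\end{equation*}
where $c>0$ depends only on $\kappa$ and the spectrum of $S$. Gronwall's lemma and the equivalence $\langle S\cdot,\cdot\rangle\simeq \|\cdot\|_{L^2}^2$ then produce the fundamental pointwise-in-$j$ decay estimate
\begin{equation*}
\|\Dj V(t)\|_{L^2} \le C_0\, e^{-c\,2^{j\gamma}t}\,\|\Dj V_0\|_{L^2},\qquad j\ge 0,
\end{equation*}
for some $C_0$ depending only on the spectrum of $S$. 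The low-frequency block $j=-1$ requires a separate treatment since the lower bound from \eqref{posi:cond:Z} is unavailable on a ball, but the dissipativity still forces $\frac{d}{dt}\langle S\Delta_{-1}V,\Delta_{-1}V\rangle\le 0$, hence $\|\Delta_{-1}V(t)\|_{L^2}\le C_0\|\Delta_{-1}V_0\|_{L^2}$.

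From the displayed pointwise decay, the bounds in \eqref{L-inf_j} follow by taking $L^\infty$ in $t\in[0,T]$, multiplying by $2^{js}$, and summing over the appropriate range of~$j$. For \eqref{L-1_j}, Bernstein's inequality gives $\|\Dj D^\gamma V\|_{L^2}\le C 2^{j\gamma}\|\Dj V\|_{L^2}$, and the equation itself yields $\|\Dj\partial_t V\|_{L^2}=\|S^{-1}\Dj Z(D)V\|_{L^2}\le C 2^{j\gamma}\|\Dj V\|_{L^2}$; inserting the decay and integrating on $[T,T+h]$ produces precisely the factor $c^{-1} e^{-c\,2^{j\gamma}T}\bigl(1-e^{-c\,2^{j\gamma}h}\bigr)$, so that weighting by $2^{js}$ and summing over $j\ge m$ closes \eqref{L-1_j}. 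For \eqref{L-1_las}, the smoothness of $Z$ and the fact that $\Delta_{-1}V$ has Fourier support in a fixed ball ensure that $Z(D)$ acts there as a bounded multiplier, whence $\|\partial_t\Delta_{-1}V\|_{L^2}\le C\|\Delta_{-1}V\|_{L^2}\le C\|V_0\|_{B^s_{2,1}}$, and integration over $[T,T+h]$ supplies the factor $h$.

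The homogeneous counterpart is obtained verbatim by replacing $\Dj$ with $\DDj$; there is no exceptional low-frequency block, since $\DDj V$ is annulus-localized at frequency $2^j$ for every $j\in\Z$, and the fundamental pointwise decay holds uniformly in $j\in\Z$. I do not foresee any genuine obstacle; the only mildly delicate point is the handling of the inhomogeneous $j=-1$ block, which lacks parabolic decay and must be controlled separately by the uniform-in-time bound sketched above.
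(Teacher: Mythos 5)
Your proof is correct and follows essentially the same route as the paper: localize in frequency with $\Dj$ (which commutes with $S\partial_t$ and $Z(D)$, so there is no remainder), perform the $S$-weighted $L^2$ energy estimate, use the dissipativity together with the annulus localization of $\widehat{V_j}$ to obtain the pointwise-in-$j$ exponential decay $\|\Dj V(t)\|_{L^2}\le C_0 e^{-c2^{j\gamma}t}\|\Dj V_0\|_{L^2}$ for $j\ge 0$, and treat the low-frequency block $\Delta_{-1}V$ separately by simply discarding the nonnegative dissipation term. The only cosmetic difference is that the paper works directly on the Fourier side whereas you invoke Plancherel, but the computations are identical.
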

 \begin{proof}
 It is only a matter of adapting the proof  for the  heat equation given in \cite{Chemin1999}. First, applying $\Dj$ to \eqref{lin:parabolic}, then 
 the Fourier transform with respect to the space variable gives:
 $$ \partial_t\widehat V_j- Z \widehat V_j=0.$$
 Then, taking  the $L^2$ inner product with 
     $\widehat V_j$ then keeping the real part gives
     \begin{align*}
\dfrac12        \dfrac{d}{dt} \Vert \widehat{V_j} \Vert^2_{L^2_S(\Rd)}+ {\rm Re}\intd Z(\xi)  \widehat{V_j}(\xi)\cdot \widehat{V_j}(\xi)d\xi=0\; \text{ with }\; \Vert V \Vert^2_{L^2_S(\Rd)}\defn \intd S V\cdot V.
     \end{align*}
     Next, using the strong  ellipticity  condition \eqref{posi:cond:Z},
      we get for all $j\ge -1$,
      \begin{align}
      \label{est_first}
     \dfrac12    \dfrac{d}{dt} \Vert \widehat{V_j} \Vert^2_{L^2_S(\Rd)} +\kappa \intd |\xi|^\gamma  |\widehat{V}_j(\xi)|^2 d\xi \le 0.
     \end{align}   
     Since  owing to the spectral localization, we have   for some positive constant $c,$  
      $$ |\xi|^\gamma   |\widehat{V_j}(\xi)|^2 \ge c2^{j\gamma}| \widehat{V_j}(\xi)|^2\esp{for all}j\ge 0,$$ 
    combining with  the fact $S$ is a constant, symmetric, positive definite  matrix, we get 
 \begin{align*}
     \dfrac12   \dfrac{d}{dt} \Vert \widehat{V_j} \Vert^2_{L^2_S(\Rd)} + c\kappa 2^{j\gamma} \Vert \widehat{V_j} \Vert^2_{L^2_S(\Rd)}\le 0, \ \text{for all}\; j\ge 0.
 \end{align*}
 This  leads to (up to a slight modification of $c$):
     \begin{align}
     \label{est:inf:Vj:j:ge:0}
    \Vert V_j(t) \Vert_{L^2_S(\Rd)}  
       \le e^{-c2^{j\gamma}t}    \Vert V_{0,j}\Vert_{L^2_S(\Rd)}\; \text{ for all }\; j\ge 0,\; t\geq0.
     \end{align}
     This  readily gives  \eqref{L-inf_j} and  \eqref{L-1_j} after taking suitable time-Lebesgue norms and summing on $j.$
    The case  $j=-1$  stems from  \eqref{est_first}, after omitting the second term in \eqref{est_first}.
    \end{proof}
Next, let us explain how to handle a second order operator with \emph{variable} coefficients. 
\begin{lemma}[G\r{a}rding inequality]
\label{gard_lem}
   Let $U:\mathbb{R}^d \mapsto \mathbb{R}^n$ be a   bounded function.  
   Assume that  the (real valued) operator $Z(U)\n_x$ is strongly elliptic in the sense of \eqref{strong_elli}.  
   Then, there exists a positive constant $c$ depending only on  the ellipticity constant   
   such that for all small enough $\varepsilon>0 $ the following inequality  holds true
   for all smooth function $f:{\mathbb R}^d\to \mathbb{R}^{n_2}$:
    \begin{multline}
    \label{gain_eli}
       -\sum_{\alpha,\beta,i,j}\intd Z^{\alpha\beta}_{ij}(U(x))\partial_\alpha\partial_\beta f^i(x) f^j(x)dx\\\ge c\Vert \nabla f\Vert^2_{L^2(\mathbb{R}^d)}-\varepsilon\Vert \nabla^2 f\Vert_{L^2(\mathbb{R})}\Vert  f\Vert_{L^2(\mathbb{R}^d)} -C \Vert  f\Vert^2_{L^2(\mathbb{R}^d)},
    \end{multline}
  where $C=C(c,\varepsilon, U)>0$ depends   only on $\varepsilon,$ the range of $U$ and  the ellipticity constant.
\end{lemma}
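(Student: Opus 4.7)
The plan is to reduce the statement to the constant-coefficient case via a partition-of-unity localization, exploiting the Legendre--Hadamard positivity \eqref{strong_elli} through Plancherel's theorem, and to absorb the variation of the coefficients into the $\varepsilon$ and $C$ terms. First, I would set $A^{\alpha\beta}(x)\defn Z^{\alpha\beta}(U(x))$; since $U$ is bounded (and, in all applications of the lemma, continuous --- $U$ belonging to a Besov space embedded in $C^0$), the tensor $A$ is bounded with uniform ellipticity constant $c_0>0$, and its modulus of continuity $\omega_A(\delta)\defn\sup_{|x-y|\le\delta}|A(x)-A(y)|$ tends to $0$ as $\delta\to 0$. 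Then I would fix $\delta>0$ and pick a smooth partition of unity $\{\phi_\mu^2\}_\mu$ subordinate to a locally finite cover of $\Rd$ by balls $B(x_\mu,\delta)$, with $\sum_\mu\phi_\mu^2\equiv1$ and $|\nabla^k\phi_\mu|\lesssim\delta^{-k}$.

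Next, using $\sum_\mu\phi_\mu^2=1$ together with the Leibniz expansion
\[
\phi_\mu\,\partial_\alpha\partial_\beta f=\partial_\alpha\partial_\beta(\phi_\mu f)-\partial_\alpha\phi_\mu\,\partial_\beta f-\partial_\beta\phi_\mu\,\partial_\alpha f-\partial_\alpha\partial_\beta\phi_\mu\,f,
\]
I would decompose
\[
-\sum_{\alpha,\beta,i,j}\intd A^{\alpha\beta}_{ij}(x)\,\partial_\alpha\partial_\beta f^i\,f^j\,dx=I_1+I_2+I_3,
\]
where $I_1\defn-\sum_\mu\intd A^{\alpha\beta}_{ij}(x_\mu)\,\partial_\alpha\partial_\beta(\phi_\mu f)^i(\phi_\mu f)^j\,dx$ involves only constant coefficients, $I_2$ collects the freezing errors of type $(A(x)-A(x_\mu))\,\partial^2(\phi_\mu f)\cdot(\phi_\mu f)$, and $I_3$ gathers the commutator terms produced by the Leibniz rule. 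In $I_1$, Plancherel's theorem combined with \eqref{strong_elli} (after the obvious symmetrization in the indices) gives $I_1\ge c_0\sum_\mu\|\nabla(\phi_\mu f)\|_{L^2}^2$, and the identity $\sum_\mu\phi_\mu\nabla\phi_\mu=\tfrac12\nabla\sum_\mu\phi_\mu^2=0$ then yields
\[
\sum_\mu\|\nabla(\phi_\mu f)\|_{L^2}^2=\|\nabla f\|_{L^2}^2+\intd|f|^2\sum_\mu|\nabla\phi_\mu|^2\,dx\ge\|\nabla f\|_{L^2}^2.
\]

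For the errors, Cauchy--Schwarz together with the finite overlap of the cover and the bounds on $\nabla^k\phi_\mu$ yield
\[
|I_2|\le C\omega_A(\delta)\bigl(\|\nabla^2 f\|_{L^2}\|f\|_{L^2}+\delta^{-2}\|f\|_{L^2}^2\bigr),\qquad |I_3|\le C\delta^{-1}\|\nabla f\|_{L^2}\|f\|_{L^2}+C\delta^{-2}\|f\|_{L^2}^2.
\]
Absorbing the $C\delta^{-1}\|\nabla f\|_{L^2}\|f\|_{L^2}$ piece into $\tfrac{c_0}{2}\|\nabla f\|_{L^2}^2+C_\delta\|f\|_{L^2}^2$ via Young's inequality, one ends up with
\[
-\sum_{\alpha,\beta,i,j}\intd A^{\alpha\beta}_{ij}\partial_\alpha\partial_\beta f^i f^j\,dx\ge\tfrac{c_0}{2}\|\nabla f\|_{L^2}^2-C\omega_A(\delta)\|\nabla^2 f\|_{L^2}\|f\|_{L^2}-C_\delta\|f\|_{L^2}^2,
\]
and given $\varepsilon>0$, choosing $\delta$ so small that $C\omega_A(\delta)\le\varepsilon$ delivers the conclusion, with $c=c_0/2$ and $C=C_\delta$ depending on $\varepsilon$, $c_0$ and (through $\omega_A$) on $U$. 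The main obstacle will be reconciling the two competing demands on the localization scale $\delta$: it must be small enough for the freezing error to be absorbed by $\varepsilon\|\nabla^2 f\|_{L^2}\|f\|_{L^2}$, while the commutator errors blow up like $\delta^{-2}$, and this trade-off is precisely what forces $C$ to depend on the modulus of continuity of $U$ rather than on its $L^\infty$ norm alone.
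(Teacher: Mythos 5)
Your proof is correct and reaches the same conclusion, but the localization is genuinely different from the paper's. You partition \emph{physical space} $\Rd$ into balls $B(x_\mu,\delta)$ of a fixed small radius $\delta$, freeze the coefficients at the centres $x_\mu$, and quantify the freezing error through the modulus of continuity $\omega_A(\delta)$ of $x\mapsto Z(U(x))$. The paper instead partitions the \emph{range} of $U$: since $U$ is bounded, $\overline{G}$ is compact and may be covered by finitely many balls $B_k\subset\R^n$ of radius $\eta$; the pullbacks $\Omega_k=U^{-1}(B_k)$ then furnish a \emph{finite} open cover of $\Rd$, on each piece of which the oscillation of $Z$ is at most $\varepsilon$ because $Z$ is uniformly continuous on the compact set $\overline G$. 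Your version buys explicit $\delta^{-k}$ control of the partition derivatives and the tidy identity $\sum_\mu\phi_\mu\nabla\phi_\mu=0$ (giving $\sum_\mu\|\nabla(\phi_\mu f)\|_{L^2}^2\ge\|\nabla f\|_{L^2}^2$ directly, cleaner than the paper's $2\|\nabla(\omega_k f)\|^2\ge\|\omega_k\nabla f\|^2-2C(\omega_k)\|f\|^2$), but it needs $\omega_A(\delta)\to0$, i.e.\ \emph{uniform} continuity of $U$; the paper's version trades this for the compactness of the range and a finite (if less explicit) partition of unity. In the applications $U-\bar U\in B^{d/2}_{2,1}\hookrightarrow C_0$, so $U$ is indeed uniformly continuous and both arguments apply; strictly speaking, the lemma as stated (``bounded function'') does not quite pin down what either proof actually needs, and it would be cleaner in both presentations to say ``bounded and uniformly continuous'', which is what the hypotheses of the theorems guarantee.
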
 
\begin{remark}
    \label{rmq:garb:in}
    The `standard' G\r{a}rding inequality reads:
     \begin{equation}
    \label{gain_elli}
     -  \sum_{\alpha,\beta,i,j}\intd 
     \partial_\beta\Bigl(
     Z^{\alpha\beta}_{ij}(U)\partial_\alpha f^i\Bigr)(x)\:\partial_\beta f^j(x)dx\ge c\Vert \nabla f\Vert^2_{L^2(\mathbb{R}^d)} -C \Vert  f\Vert^2_{L^2(\mathbb{R}^d)}.
    \end{equation} 
Although Inequality  \eqref{gain_eli}  may seem weaker since there remain second order derivatives in the right-hand side, it will be useful for us once combined with Bernstein inequality, 
since it will be applied  only to spectrally localized functions $f.$
\end{remark}

\begin{proof}[Proof of Lemma \ref{gard_lem}.] If the functions $ Z^{\alpha\beta}_{ij}$ are constant 
    then, in light of  Fourier-Plancherel  theorem and of  \eqref{strong_elli},
    we have
   $$    -\sum_{\alpha,\beta,i,j}\intd Z^{\alpha\beta}_{ij}\partial_\alpha\partial_\beta  f^i(x)f^j(x)dx=\mathcal{R}e  \sum_{\alpha,\beta,i,j}\intd Z^{\alpha\beta}_{ij}\xi_\alpha \xi_\beta  \widehat{f^i}\overline{\widehat{f^j}}d\xi         \ge c_1 \Vert \nabla f\Vert^2_{L^2(\mathbb{R}^d)}$$
    where $c_1$ in the constant appearing in \eqref{strong_elli}. Hence, \eqref{gain_eli} is true in this special case.
\smallbreak
   In the case of variable coefficients, if  the function $U$  has range in a  small  ball $B(\overline U,\eta)$ about 
    $\overline U,$  leveraging  the preceding case gives:
\begin{align*}
         -\sum_{\alpha,\beta,i,j}\intd Z^{\alpha\beta}_{ij}(U(x))\partial_\alpha\partial_\beta f^i \,f^j\,dx&=-\sum_{\alpha,\beta,i,j}\intd Z^{\alpha\beta}_{ij}(\overline U)\partial_\alpha\partial_\beta f^i\, f^j\,dx \nonumber\\
         &-\sum_{\alpha,\beta,i,j}\intd (Z^{\alpha\beta}_{ij}(U(x))-Z^{\alpha\beta}_{ij}(\overline U))\partial_\alpha\partial_\beta f^i\,f^j\,dx\nonumber\\
         \ge c_1 \Vert \nabla f\Vert^2_{L^2(\mathbb{R}^d)}& -\sum_{\alpha,\beta,i,j}\intd (Z^{\alpha\beta}_{ij}(U(x))-Z^{\alpha\beta}_{ij}(\overline U))\partial_\alpha\partial_\beta  f^i\,f^j\,dx.\nonumber\\
    \end{align*}
    If $\eta$ is small enough  that the $Z^{\alpha\beta}_{ij}(U)$'s have an oscillation  of size at most $\varepsilon,$ 
    then the second term of the last inequality may be bounded by $\varepsilon\Vert \nabla^2 f\Vert_{L^2(\mathbb{R})}\Vert  f\Vert_{L^2(\mathbb{R})}.$
\smallbreak
    Finally, in  the general case, for all $\varepsilon>0,$ the (bounded) range $G$ of $U$  
   may be recovered by a finite family $(B_k)_{1\leq k\leq N}$ of balls of radius $\eta.$ 
   Denoting  $\Omega_k \defn U^{-1}(B_k),$ one can thus consider 
     a partition of unity in $\mathbb{R}^d$  such that 
\begin{equation}\label{partition}
1=\sum_{k=1}^N{\omega_k^2} (x) \esp{} \forall x\in \mathbb{R}^d \esp{with} \omega_k \ge 0\andf {\rm Supp}\, \omega_k\subset \!\subset\Omega_k. 
\end{equation}
    Then, by the Leibniz' rule of differentiation of the product of functions, Cauchy-Schwarz inequality and the estimate of the case treated just above, we have,
    \begin{align*}
         -\sum_{\alpha,\beta,i,j}\intd Z^{\alpha\beta}_{ij}(U(x))\,\partial_\alpha\partial_\beta f^i\, f^j\, dx&= - \sum_{\alpha,\beta,i,j}\sum_{k=1}^N \intd Z^{\alpha\beta}_{ij}(U(x))\,\omega_k^2\, \partial_\alpha\partial_\beta f^i\,f^j\,dx\nonumber\\
         &= -\sum_{\alpha,\beta,i,j}\sum_{k=1}^N \int_{\Omega_k} Z^{\alpha\beta}_{ij}(U(x))\,\partial_\alpha\partial_\beta(\omega_k f^i)\,\omega_k f^j\,dx\nonumber\\
         &\qquad\qquad+2\!\sum_{\alpha,\beta,i,j}\sum_{k=1}^N \intd \!Z^{\alpha\beta}_{ij}(U(x))\,f^i\,\partial_\alpha\omega_k \,\partial_\beta f^j\,\omega_k\,dx \nonumber\\
         &\qquad\qquad+\sum_{\alpha,\beta,i,j}\sum_{k=1}^N \intd Z^{\alpha\beta}_{ij}(U(x))\,
         \partial_\alpha\partial_\beta \omega_k \, f^i\,f^j\,\omega_k\, dx\nonumber\\
         &\ge \sum_{k=1}^N\left(c_1\Vert \nabla (\omega_k f)\Vert^2_{L^2}-\varepsilon C(\omega_k) \Vert\nabla^2  f\Vert_{L^2}\Vert  f\Vert_{L^2}\right.\nonumber\\
         &\left.\qquad\qquad\qquad\quad
         - C(\omega_k)\Vert \nabla f\Vert_{L^2}\Vert f\Vert_{L^2}-C(\omega_k)\Vert f\Vert^2_{L^2}\right)\cdotp
    \end{align*}
    Relation \eqref{partition} is used for the first term of the right-hand side, after 
    observing that
    \begin{align*}   2 \normede{\n (\omega_k f) }^2\ge 
        \normede{\omega_k \n f}^2 -2\normede{f\n \omega_k}^2\ge \normede{\omega_k \n f}^2 -2C(\omega_k)\normede{f}^2.
    \end{align*}
    Then, using  Young's inequality for the term $C(w_k)\Vert \nabla f\Vert_{L^2}\Vert f\Vert_{L^2}$, that is,
    \[ C(w_k)\Vert \nabla f\Vert_{L^2}\Vert f\Vert_{L^2} \le \frac{c_1}{4} \Vert \nabla f\Vert_{L^2}^2 +  C(w_k) \Vert f\Vert_{L^2}^2, \]
    allows to get 
      the desired  result.        
    \end{proof}

\bibliographystyle{plain} 
\bibliography{boblio}
\end{document}